\documentclass[11pt,reqno]{amsart}
\usepackage{amssymb,amsmath,amsfonts,amsthm,enumerate,stmaryrd}
\usepackage[left=.75 in, right=.75 in,top=.75 in, bottom=.75 in]{geometry}
\usepackage{nameref,hyperref,cleveref}

\providecommand{\abs}[1]{\left\vert#1\right\vert}
\providecommand{\norm}[1]{\left\Vert#1\right\Vert}

\def\nab{\nabla}
\def\dt{\partial_t}

\def\ls{\lesssim}
\def\p{\partial}

\def\C{\mathbb{C}}

\def\N{\mathbb{N}}

\def\R{\mathbb{R}}

\def\T{\mathbb{T}}
\def\Z{\mathbb{Z}}

\def\st{\;\vert\;}

\DeclareMathOperator{\diverge}{div}


\def\Z{\mathbb{Z}}

\def\R{\mathbb{R}}
\def\N{\mathbb{N}}
\def\C{\mathbb{C}}

\def\T{\mathbb{T}}

\def\Div{\text{div}}

\def\half{\frac{1}{2}}
\def\wo{\backslash}
\def\ben{\begin{enumerate}}
\def\een{\end{enumerate}}
\def\bit{\begin{itemize}}
\def\eit{\end{itemize}}

\def\Hbot{{\mathbin{{}_0{H}^1}}}
\def\Hhel{{\mathbin{{}_0{H}^1_{\text{sol}}}}}
\def\Hxihel{{H^1_{\xi,s}}}
\def\Hxi{{H^1_\xi}}

\def\lesim{\lesssim}
\def\gesim{\gtrsim} 
\def\eell{\lambda}


\title[Energy decay and fractional boundary operators]{Decay of solutions to the linearized free surface Navier-Stokes equations with fractional boundary operators}

\author{Ian Tice}
\address{
Department of Mathematical Sciences\\
Carnegie Mellon University\\
Pittsburgh, PA 15213, USA
}
\email[I. Tice]{iantice@andrew.cmu.edu}
\thanks{I. Tice was supported by a Simons Foundation Grant (\#401468) and an NSF CAREER Grant (DMS \#1653161). }

\author{Samuel Zbarsky}
\address{
Department of Mathematics\\
Princeton University\\
Princeton, NJ 08544, USA
}
\email[S. Zbarsky]{szbarsky@math.princeton.edu}
\thanks{S. Zbarsky was supported by a National Science Foundation Graduate Research Fellowship. }

\subjclass[2010]{Primary: 35Q30, 35R35, 26A33; Secondary: 35B40, 76E17, 76D45 }

\keywords{Free boundary problems, Viscous surface waves, Fractional differential operators}

\newtheorem{lem}{Lemma}[section]

\newtheorem{thm}[lem]{Theorem}
\newtheorem{remark}[lem]{Remark}

\theoremstyle{definition}

\numberwithin{equation}{section} 


\begin{document}

\begin{abstract}
In this paper we consider a slab of viscous incompressible fluid bounded above by a free boundary, bounded below by a flat rigid interface, and acted on by gravity.  The unique equilibrium is a flat slab of quiescent fluid.  It is well-known that equilibria are asymptotically stable but that the rate of decay to equilibrium depends heavily on whether or not surface tension forces are accounted for at the free interface.  The aim of the paper is to better understand the decay rate by studying a generalization of the linearized dynamics in which the surface tension operator is replaced by a more general fractional-order differential operator, which allows us to continuously interpolate between the case without surface tension and the case with surface tension.  We study the decay of the linearized problem in terms of the choice of the generalized operator and in terms of the horizontal cross-section.  In the case of a periodic cross-section we identify a critical order of the differential operator at which the decay rate transitions from almost exponential to exponential.
\end{abstract}

\maketitle



\section{Introduction }

\subsection{Free boundary Navier-Stokes equations}

Consider the evolution of a layer of viscous incompressible fluid subject to a constant gravitational force.  The physically relevant dimensions for the evolution are either $3$ or $2$, but we will actually consider arbitrary dimension $N \ge 2$, as the results of this paper ultimately do not depend on the dimension.  The fluid is bounded below by a flat hyper-plane, corresponding to an interface with a rigid solid, and above by a free surface that evolves with the fluid, corresponding to the interface with another fluid.  We assume that the fluid lying above the moving interface is trivial in the sense that it is of constant pressure (a vacuum, for example).  We will consider either the case in which the horizontal extent of the fluid is infinite, in which case its cross section is 
\begin{equation}
\Sigma = \R^{N-1}, 
\end{equation}
or else the case in which the fluid is horizontally periodic, in which case we assume its cross section is the $(N-1)$-torus 
\begin{equation}
\Sigma = \T^{N-1}. 
\end{equation}
More general periodicity could be enforced by allowing the periodicity length to vary in each coordinate direction, but this does not have any serious impact on the results of the paper, so we have chosen the simplest setting of a standard torus.

Let us now state the equations of motion governing the evolution.  Throughout the paper we will write points $z \in \Sigma \times \R$ as $z = (x,y)$ with $x \in \Sigma$ and $y \in \R$.  In other words, $x$ is the horizontal variable and $y$ is the vertical variable.  We continue to write $\p_i = \p_{z_i}$ for partial derivatives, with the understanding that $z_i = x_i$ for $1 \le i \le N-1$ and $z_N = y$.  

We assume that the moving upper boundary of the fluid is given by the graph of an unknown function $\eta: \Sigma \times [0,\infty) \to (0,\infty)$, which means that the moving fluid domain is modeled by the open $N-$dimensional set
\begin{equation}
\Omega(t) = \{ z = (x,y) \in \Sigma \times \R \st 0 < y < \eta(x,t)\}.
\end{equation}
Then the moving upper surface is 
\begin{equation}
 \Sigma(t) = \{ z =(x,y) \in \Sigma \times \R \st  y = \eta(x,t)\}.
\end{equation}

For each $t\ge 0$ the fluid is determined by its velocity and pressure functions $(u,\bar{p}) :\Omega(t) \to \R^N \times \R$.  Then the unknowns $(u, \bar{p}, \eta)$ must satisfy the  incompressible
Navier-Stokes equations in $\Omega(t)$ for $t>0$: 
\begin{equation}\label{ns_euler_prim}
\begin{cases}
\partial_t u + u \cdot \nabla u + \nabla \bar{p} =   \Delta u - g e_N & \text{in }
\Omega(t) \\ 
\diverge{u}=0 & \text{in }\Omega(t) \\ 
\partial_t \eta = u_N - \sum_{j=1}^{N-1} u_j \partial_{j}\eta  & 
\text{on } \Sigma(t) \\ 
(\bar{p} I -    \mathbb{D}u ) \nu = (P_{ext}   - \sigma \mathcal{H}(\eta))\nu & \text{on } \Sigma(t) \\ 
u = 0 & \text{on } \{ y = 0\}.
\end{cases}
\end{equation}
Here $e_N = (0,\dotsc,0,1) \in \R^N$ is the vertical unit vector,  $g >0$ is the strength of the gravitational force,
\begin{equation}
(\mathbb{D} u)_{ij}  = \partial_i u_j + \partial_j u_i 
\end{equation}
is the symmetrized gradient of $u$,  $\nu$ is the outward-pointing unit normal vector on the moving surface $\Sigma(t)$,  $I$ is the $N \times N$ identity matrix, $P_{ext} \in \R$ is the constant pressure above the fluid, $\sigma \ge 0$ is the surface tension coefficient, and 
\begin{equation}\label{H_def}
 \mathcal{H}(\eta) = \diverge\left( \frac{\nab \eta}{\sqrt{1+\abs{\nab \eta}^2}}\right)
\end{equation}
is (minus) twice the mean curvature of the free interface.  The first two equations in \eqref{ns_euler_prim} are the standard incompressible Navier-Stokes equations with unit density and viscosity.  The third is the kinematic transport equation for $\eta$, the fourth is the balance of stress at the interface, and the fifth is the no-slip boundary condition at the bottom.  The problem is augmented with initial data $\eta_0 : \Sigma \to (0,\infty)$, which determines the initial domain $\Omega_0$, as well as an initial velocity field $u_0 : \Omega_0 \to \R^N$.  Note that the assumption  $\eta_0 > 0$ on $\Sigma$ means that $\Omega_0$ is a well-defined open, connected set.   

It is convenient to rewrite \eqref{ns_euler_prim} in an equivalent form.  To do so we introduce an equilibrium height $\ell >0$ and redefine the pressure via 
\begin{equation}
 p(z,t) = \bar{p}(z,t) - P_{ext} + g(y - \ell).
\end{equation}
We then introduce the variation from equilibrium height as 
\begin{equation}
 h(x,t) = \eta(x,t) - \ell.
\end{equation}
In terms of these new unknowns, the system may be rewritten as
\begin{equation}\label{ns_euler}
\begin{cases}
\partial_t u + u \cdot \nabla u + \nabla p =   \Delta u  & \text{in }
\Omega(t) \\ 
\diverge{u}=0 & \text{in }\Omega(t) \\ 
\partial_t h = u_N - \sum_{j=1}^{N-1} u_j \partial_{j} h  & 
\text{on } \Sigma(t) \\ 
(p I -    \mathbb{D}u ) \nu = (gh   - \sigma \mathcal{H}(h))\nu & \text{on } \Sigma(t) \\ 
u = 0 & \text{on } \{ y = 0\}.
\end{cases}
\end{equation}

\subsection{Equilibria, stability, and linearization}
 
Associated to sufficiently regular solutions of \eqref{ns_euler} are physical energy and dissipation functionals:  the energy is 
\begin{equation}\label{physenergy}
E(t)=\int_{\Omega(t)} \half |u(z,t)|^2 dz +\half \int_\Sigma g \abs{h(x,t)}^2 dx +\sigma \int_\Sigma (\sqrt{1+|\nabla h(x,t)|^2}-1) dx,
\end{equation}
and the dissipation is 
\begin{equation}
D(t) =  \half\int_{\Omega(t)} |{\mathbb D}u(z,t)|^2 dz.
\end{equation}
The first term in the energy is the total kinetic energy of the fluid, the second is the gravitational potential energy stored in the fluid, and the third is the surface energy generated by deviation from a flat interface.  The dissipation functional measures the extent to which the fluid flow deviates from a rigid motion.  The energy and dissipation are related through the energy-dissipation equation:
\begin{equation}\label{ed_phsx}
 \frac{d}{dt} E(t) + D(t) =0,
\end{equation}
as can be seen by taking the dot product of the first equation in \eqref{ns_euler} with $u$, integrating by parts, and using the other equations in \eqref{ns_euler}.  The identity \eqref{ed_phsx} and the non-negativity of $D$ show that the energy is non-increasing in time and that any decrease in the energy is accounted for by the dissipation functional.

The energy-dissipation equation \eqref{ed_phsx} reveals that any equilibrium (time-independent) solution must satisfy $D=0$, and so the no-slip boundary condition and Korn's inequality, Theorem \ref{korn}, imply that $u=0$.  In turn, this implies that $p=0$ and $h=0$.  We deduce from this that the only equilibrium solution corresponds to a quiescent fluid in the flat slab $\Sigma \times (0,\ell)$.  

Clearly $u =0$, $p=0$, $h=0$ corresponds to a global minimizer of the energy functional $E$, so we formally expect the equilibrium solution to be stable.  This is indeed the case, and in fact the equilibrium is asymptotically stable, though the rate of equilibration is highly sensitive to the form of $\Sigma$ and the value of $\sigma$.  We now survey some of the known results.  General data local well-posedness of solutions without surface tension ($\sigma=0$) was first proved by Beale \cite{Beale1981}.  In \cite{Beale1983}, Beale showed global existence with surface tension ($\sigma >0$) and $\Sigma=\R^{2}$ for small initial data.  Algebraic decay in this case, assuming the initial data belong to $L^1$, was proved by Beale-Nishida \cite{BealeNishida1985}. In \cite{NishidaTTYYH2004}, Nishida-Teramoto-Yoshihara showed that with surface tension, $\Sigma=\T^{2}$, and with small initial data, the energy decays exponentially.  In \cite{Hataya2009}, Hataya showed that without surface tension when $\Sigma = \T^2$, if the data are small in a certain Sobolev space, then the energy decays algebraically: $E(t)\lesim (1+t)^{-2}$.  In \cite{GuoTice2013a2}, Guo-Tice showed algebraic decay without surface tension when $\Sigma=\R^{2}$ and with small initial data.  In \cite{GuoTice2013}, Guo-Tice showed that when $\Sigma=\T^{2}$ without surface tension, we get almost exponential decay of the total energy for small initial data, namely if $M\in\N$ with $M\ge 5$ and $||u_0||_{H^{2M}}^2+||\eta_0||_{H^{2M+\half}}^2$ is sufficiently small, then
\begin{equation}
E(t)\lesim (1+t)^{-4M}. 
\end{equation}
We summarize the known decay rates in the following table.
\begin{displaymath}
\begin{array}{| c | c | c | }
\hline
& 
\Sigma = \R^2 & 
\Sigma = \T^2   \\ \hline
 \sigma > 0 &
\textnormal{Algebraic decay} &
\textnormal{Exponential decay}    \\ \hline
\sigma =0 &
\textnormal{Algebraic decay} &
\textnormal{Almost exponential decay}     \\ \hline
\end{array}
\end{displaymath}

We can get a rough understanding of the role that surface tension plays in determining the decay rate by examining the linearization of \eqref{ns_euler}.  The linearized problem is posed in the fixed equilibrium domain 
\begin{equation}
 \Omega = \Sigma \times (0,\ell),
\end{equation}
with the linearized unknowns $(u,p,\eta)$ evolving according to the linearized equations:
\begin{equation}\label{ns_linear}
\begin{cases}
\partial_t u +  \nabla p =   \Delta u  & \text{in }
\Omega \\ 
\diverge{u}=0 & \text{in }\Omega \\ 
\partial_t h = u_N   & 
\text{on } \{y = \ell\} \\ 
(p I -    \mathbb{D}u ) e_N = (gh   - \sigma \Delta h)e_N & \text{on } \{y = \ell\} \\ 
u = 0 & \text{on } \{y= 0\}.
\end{cases}
\end{equation}
The corresponding linearized energy and dissipation are
\begin{equation}
E_{lin}(t)=\int_{\Omega} \half |u(z,t)|^2 dz +\half \int_\Sigma \left( g \abs{h(x,t)}^2 + \sigma \abs{\nab h(x,t)}^2 \right) dx 
\end{equation}
and
\begin{equation}
D_{lin}(t) =  \half\int_{\Omega} |{\mathbb D}u(z,t)|^2 dz,
\end{equation}
and they obey a linearized version of \eqref{ed_phsx}:
\begin{equation}\label{formal_ed}
 \frac{d}{dt} E_{lin}(t) + D_{lin}(t) =0.
\end{equation}
We can hope to gain some insight into the decay properties of solutions by using this to study the decay of $E_{lin}$.  

As defined, $D_{lin}$ provides no control of the linearized free surface function $h$, but we can use the equations \eqref{ns_linear} to try to gain some control.  Formally counting derivatives (in particular, ignoring the pressure for now), we expect to have the regularity relation 
\begin{equation}\label{formal_reg}
 gh - \sigma \Delta h \simeq \text{Tr}(\mathbb{D}u),
\end{equation}
where $\text{Tr}$ denotes the trace onto $\Gamma = \{y = \ell\}$ (see Theorem \ref{trace}).  

Now, when $\sigma >0$ the left side of  \eqref{formal_reg} is a second-order elliptic operator, so we expect that 
\begin{equation}
 \norm{h}_{H^{s+2}(\Sigma)} \ls \norm{\text{Tr}(\mathbb{D}u)}_{H^s(\Gamma)} \ls \norm{u}_{H^{s+3/2}(\Omega)}.
\end{equation}
Formally choosing $s + 3/2 = 1$ then shows that we expect 
\begin{equation}
 \norm{h}_{H^{3/2}(\Sigma)}^2 \ls D_{lin}
\end{equation}
and since 
\begin{equation}
 \half \int_\Sigma \left( g \abs{h(x)}^2 + \sigma \abs{\nab h(x)}^2 \right) dx \asymp \norm{h}_{H^1(\Sigma)}^2
\end{equation}
we conclude that it is plausible that the linearized dissipation is coercive over the linearized energy, i.e. 
\begin{equation}
 E_{lin} \ls D_{lin},
\end{equation}
which together with \eqref{formal_ed} would yield exponential decay of solutions with surface tension.  When $\Sigma = \mathbb{T}^2$, this argument can be made rigorous in a higher-regularity context (this is essentially the strategy of \cite{NishidaTTYYH2004}), but when $\Sigma = \R^2$ there are certain technical complications that obstruct exponential decay (see \cite{BealeNishida1985}).

On the other hand, when $\sigma =0$, the left side of \eqref{formal_reg} is a zeroth-order elliptic operator, so no regularity is gained.  A formal derivative count then shows that we expect 
\begin{equation}
 \norm{h}_{H^{s}(\Sigma)} \ls \norm{\text{Tr}(\mathbb{D}u)}_{H^s(\Gamma)} \ls \norm{u}_{H^{s+3/2}(\Omega)},
\end{equation}
which yields, upon formally setting $s =-1/2$, 
\begin{equation}
  \norm{h}_{H^{-1/2}(\Sigma)}^2 \ls D_{lin}.
\end{equation}
In the case without surface tension 
\begin{equation}
 \half \int_\Sigma   g \abs{h(x)}^2     dx \asymp \norm{h}_{L^2(\Sigma)}^2,
\end{equation}
and so we conclude that we cannot expect the dissipation functional to be coercive over the energy, and as such we cannot expect exponential decay.

\subsection{More general boundary operators }

The above discussion highlights the role played by the order of the differential operator $g -\sigma \Delta$ in determining the decay properties of solutions to the linearized problem \eqref{ns_linear}.  Here $\sigma>0$ gives a second-order operator, while $\sigma =0$ gives a zeroth-order operator.  This suggests a natural question: what happens if we replace the operator with a more general one of fractional order?  The goal of this paper is to answer this question for the linearized problem.

To this end, we first introduce the more general boundary operator.  The operator $g - \sigma \Delta$ acts on the Fourier side (the Fourier transform is taken relative to the horizontal $x$ variable only, i.e. $\mathcal{F}: L^2(\Sigma) \to L^2(\hat{\Sigma})$ for $\hat{\Sigma}$ the dual group of $\Sigma$:  $\hat{\Sigma} = \R^{N-1}$ when $\Sigma = \R^{N-1}$ and $\hat{\Sigma} = \Z^{N-1}$ when $\Sigma = \T^{N-1}$) via 
\begin{equation}
\mathcal{F}(gh -\sigma \Delta h)(\xi) =    (g + 4\pi^2\sigma \abs{\xi}^2) \mathcal{F}h(\xi)  \text{ for } \xi \in \hat{\Sigma}.
\end{equation}
The essential features here are that the Fourier multiplier $g + 4\pi^2\sigma \abs{\xi}^2$ is real, bounded below by a positive constant, and grows quadratically for large $\xi$.   This suggests that we replace $g - \sigma \Delta$ by the more general Fourier multiplication operator, $M$, defined via
\begin{equation}\label{eq:Mmu}
\mathcal{F} M(h)(\xi)=\mu(\xi)\mathcal F h(\xi) \text{ for } \xi \in \hat{\Sigma}
\end{equation}
for a symbol function $\mu: \hat{\Sigma} \to \R$ satisfying 
\begin{equation}\label{mu_assump}
\theta \le \mu(\xi) \le \frac{1}{\theta}(1+|\xi|)^3 \text{ for all } \xi \in \hat{\Sigma}
\end{equation}
for some $\theta>0$.  Occasionally in the paper we will also demand  that 
\begin{equation}
\lim_{|\xi|\to\infty}\frac{\mu(\xi)}{|\xi|^3}=0,
\end{equation}
but we will always make this extra assumption clear. 

Since multiplication by $\xi$ on the frequency side corresponds to differentiation on the space side, we can think of $M$ as a generalized differential operator of order no more than $3$.  In particular, this choice of $\mu$ allows us to continuously interpolate between the second-order surface tension operator and the zeroth-order operator without surface tension.  It also allows us to exceed the order of the surface tension operator and go up to third order.  In principle we could allow $\mu$ to grow faster than cubically, but various calculations and statements of decay rates would become significantly more complicated, so we have focused our attention on the case of at most cubic growth.  We are most interested in the case 
\begin{equation}
\mu(\xi)=g+\sigma(2\pi|\xi|)^{2r} \text{ for } 0\le r\le 1,  
\end{equation}
which provides a very simple way of interpolating between no surface tension ($r=0$) and the presence of surface tension ($r=1$).  In this case we can write 
\begin{equation}
M(h)=gh+\sigma(-\Delta)^r h,
\end{equation}
as $(2\pi \abs{\xi})^{2r}$ is the standard symbol for the $r^{th}$ power of the negative Laplacian.

Two features of the operator $M$ defined by \eqref{eq:Mmu} are worth noting.  The first is that $\mu$ is not assumed to be a radial function, so the corresponding operator $M$ can be anisotropic.  The second is that in general the operator $M$ is nonlocal.  

Switching to the generalized boundary operator in \eqref{ns_linear} leads to the following system of equations:
\begin{equation}\label{eq:fullstrong}
\begin{cases}
\partial_t u +  \nabla p =   \Delta u  & \text{in }
\Omega \\ 
\diverge{u}=0 & \text{in }\Omega \\ 
\partial_t h = u_N   & 
\text{on } \{y = \ell\} \\ 
(p I -    \mathbb{D}u ) e_N = M(h)e_N & \text{on } \{y = \ell\} \\ 
u = 0 & \text{on } \{y= 0\}.
\end{cases}
\end{equation}
Our goal is to characterize the decay properties of this problem in terms of $M$, or equivalently $\mu$.

In the case that $\Sigma=\T^{N-1}$, there is an additional observation and convention we need.
Note that the condition $\diverge{u}=0$ guarantees by the divergence theorem that for all $y\in (0,\ell)$, 
\begin{equation}
\int_{\T^{N-1}} u_N(x,y,t) dx=\int_{\T^{N-1}\times(0,y)} \diverge u(z,t) dz=0
\end{equation}
and that
\begin{equation}\label{zero_avg_1}
\frac{d}{dt}\int_{\T^{N-1}} h(x,t) dx=\int_{\T^{N-1}} u_N(x,\ell,t) dx=0.
\end{equation}
We will assume that the initial data for the free surface, $h_0$, satisfies
\begin{equation}\label{zero_avg_2}
\int_{\T^{N-1}} h_0(x) dx=0, 
\end{equation}
since otherwise we should choose a different value of $\ell$. Thus we add the requirement that our solutions to \eqref{eq:fullstrong} also satisfy
\begin{equation}
\int_{\T^{N-1}} h(x,t) dx=0 \text{ for all } t \ge 0.
\end{equation}

There is an energy-dissipation structure associated with \eqref{eq:fullstrong}, namely if we set
\begin{equation}
\mathcal{E}(t) =\half \int_\Omega|u(z,t)|^2dz +\half \int_\Sigma M(h)(x,t)h(x,t) dx 
\end{equation}
and
\begin{equation}
\mathcal{D}(t) =\half \int_\Omega |{\mathbb D}u(z,t)|^2 dz, 
\end{equation}
then we have
\begin{equation}
\frac{d}{dt}\mathcal{E}(t)+\mathcal{D}(t)=0. 
\end{equation}
On the Fourier side, we may write 
\begin{equation}
\int_\Sigma M(h)h=\int_{\hat{\Sigma}} \mu(\xi) \abs{ \hat h(\xi)}^2 d\xi 
\end{equation}
so the energy is positive definite (when $\Sigma = \T^{N-1}$ the integral over $\hat{\Sigma}$ is really a sum).   Note that changing $M$ directly changes the structure of the energy, while leaving the structure of the dissipation unchanged.  However, when we seek to use the linearized equations to control $h$ in terms of the dissipation functional, we appeal to the equation 
\begin{equation}
M(h) =  p - \mathbb{D}u e_N \cdot e_N,
\end{equation}
which shows that these estimates of $h$ will also depend on the form of $M$.  Thus, we expect that the decay of solutions will depend on the balance of these features.

\subsection{Main results and discussion}

In order to study the decay of the energy $\mathcal{E}$, we will decompose it into contributions from each individual Fourier mode $\xi \in \hat{\Sigma}$.  To this end we define the full energy to be
\begin{equation}\label{term:fullenergy} 
\mathcal{E}  =\half \int_\Omega|u|^2+\half \int_\Sigma M(h)h,
\end{equation}
and for each $\xi \in \hat{\Sigma}$ we define 
\begin{equation}\label{term:xienergy}
 \mathcal E_\xi  =\half \int_0^\ell|\hat u(\xi,y)|^2dy+\frac{\mu(\xi)}{2}|\hat h(\xi)|^2.
\end{equation}
These are clearly related via Fourier synthesis:
\begin{equation}
 \mathcal{E} = \int_{\hat{\Sigma}} \mathcal{E}_\xi d\xi.
\end{equation}

The main result of the paper is proved in Theorem \ref{thm:xidecayrate}, where we establish the decay properties of $\mathcal{E}_\xi$ for $\xi \neq 0$.  We prove that 
\begin{equation}
\mathcal E_\xi(t)\le b\mathcal E_\xi(0)\exp\left(-c\frac{|\xi|^2\mu(\xi)}{(1+|\xi|)^3}t\right)
\end{equation}
for constants $b,c >0$.  When $\Sigma = \T^{N-1}$ the zero mode is important, but it decays in a different manner: in Theorem \ref{thm:xidecayrate_zero} we show that $\mathcal{E}_0$ decays exponentially in this case.  These results highlight an essential feature of the linearized problem \eqref{eq:fullstrong}: there is a significant difference in the decay properties of the high frequency modes and the low frequency modes. Note that in the periodic case, $\Sigma = \T^{N-1}$, there is only one low frequency mode, namely the zero mode.  These decay estimates are essentially sharp in the sense that there exist solutions that achieve these decay rates.  We prove this in Theorem \ref{thm:ansatzfullbigxi} for high frequencies and in Theorem \ref{thm:ansatzfullsmallxi} for low frequencies, under some mild extra assumptions about $\mu$.
   
In the rest of our results we primarily specialize to the case $M = g + \sigma(-\Delta)^r$.  This gives a striking picture of the high-low split.  In this case our frequency-based decay results are summarized in the following table.
\begin{displaymath}
\begin{array}{| c | c | c | }
\hline
& 
\Sigma = \R^{N-1} & 
\Sigma = \T^{N-1}   \\ \hline
\textnormal{High frequency modes} &
\exp\left(-c|\xi|^{2r-1}t\right)  &
\exp\left(-c|\xi|^{2r-1}t\right)    \\ \hline
\textnormal{Low frequency modes} &
\exp\left(-c|\xi|^2t\right)  &
 \exp(-\delta t)     \\ \hline
\end{array}
\end{displaymath}

As mentioned above, in the periodic case $\Sigma = \T^{N-1}$ there is only one low frequency mode, and it decays exponentially.  This means that the decay of the nonzero modes completely determines the decay of the full energy $\mathcal{E}$.  When $1/2 \le r \le 1$ these frequencies decay exponentially, and a Fourier synthesis then allows us to prove in Theorem \ref{thm:torusdecayexponential} that $\mathcal{E}$ also decays exponentially.  When $0 \le r < 1/2$ the nonzero frequencies do not decay exponentially. In Theorem \ref{thm:torusdecayalgebraic} we show that we can still prove that solutions decay, but that they do so at an algebraic rate tied to the regularity of the initial data and to the difference $1/2-r$.  In particular, 
\begin{equation}
\mathcal{E}(t) \ls K(1+t)^{-s/(1/2 -r)}
\end{equation}
where $s >0$ is a Sobolev regularity index associated to the data and $K$ depends on the data (up to regularity $s$).  This means that the more regular the data are, the faster the solution decays.  This is almost exponential decay in the same sense as proved in \cite{GuoTice2013} when $r=0$, i.e. without surface tension.  Moreover, the closer $r$ is to $1/2$, the more decay is produced by gains in regularity of the data.

This analysis shows that there is a sharp transition that occurs at the critical index $r =1/2$, with solutions decaying almost exponentially for $r < 1/2$ and exponentially for $1/2 \le r$.  This suggests a more detailed study of the transition is in order.  In Theorem \ref{thm:transitiondecay} we consider the cases in which the symbol $\mu$ is of the form 
\begin{equation}
\mu(\xi)=g + 2\pi \sigma \frac{|\xi|}{(\log|\xi|)^\alpha} \text{ or } 
 \mu(\xi)=g + 2\pi \sigma \frac{|\xi|}{(\log\log|\xi|)^\alpha} 
\end{equation}
for large $\xi$.  These provide a simple way of zooming in around the index $r=1/2$ to further study the transition.  In the former case we prove that 
\begin{equation}\label{eq:main_refine_1}
\mathcal E(t)\le K  \exp\left(- c t^{\frac{1}{1+\alpha}}\right),
\end{equation}
and in the latter case we prove that 
\begin{equation}\label{eq:main_refine_2}
\mathcal E(t)\le K  \exp\left(- c \frac{t}{(\log t)^{\alpha}}\right)
\end{equation}
where $K$ is a constant depending on the data and $c>0$ is a constant depending on the parameters.  This provides a more refined picture of what happens near the critical index: solutions shift from almost exponential decay to weaker forms of exponential decay, as in \eqref{eq:main_refine_1} and \eqref{eq:main_refine_2}.

The disparity between the high and low frequency decay rates plays a more serious role in determining the decay of the full energy $\mathcal{E}$ in the non-periodic case $\Sigma = \R^{N-1}$.  This is due to the obvious fact that there are nonzero low frequency modes.  The decay properties of the high frequency modes remain the same as in the periodic case, but the slower decay of the low frequencies slows the overall decay rate of $\mathcal{E}$, which we prove in Theorems \ref{thm:nonper_L2_decay} and \ref{thm:nonper_L1_decay}.  In these results we use assumptions on the initial data to guarantee quantitative decay rates for the low frequencies.  In Theorem \ref{thm:nonper_L2_decay} we use $L^2-$based spaces and the Riesz transform, as done in the analysis of the problem without surface tension ($r=0$) in \cite{GuoTice2013a2}.  In contrast, in Theorem \ref{thm:nonper_L1_decay} we use $L^1-$based assumptions as done for the problem with surface tension in \cite{BealeNishida1985}.  These both yield fixed algebraic decay rates.

The following table summarizes our decay rates in terms of $\Sigma$ and the operator $M = g + \sigma(-\Delta)^r$. 
\begin{displaymath}
\begin{array}{| c | c | c | }
\hline
& 
\Sigma = \R^{N-1} & 
\Sigma = \T^{N-1}   \\ \hline
 1/2 \le r \le 1  &
\textnormal{Algebraic decay}  &
\textnormal{Exponential decay}    \\ \hline
0 \le r < 1/2 &
\textnormal{Algebraic decay}  &
\textnormal{Almost exponential decay}   \\ \hline
\end{array}
\end{displaymath}

\section{Preliminaries }

In this section we collect a few tools and notational conventions used throughout the remainder of the paper.

\subsection{Fourier transform}

Whenever we apply the Fourier transform, it will be with respect to the horizontal variable $x\in \R^{N-1}$.  For any function that is at least horizontally $L^2$, the Fourier transform will be defined in the standard way, using the convention for the Fourier transform that
\begin{equation}
\mathcal F f(\xi)=\int_{\Sigma} f(x)e^{-2\pi i\xi\cdot x}dx
\end{equation}
for $\xi \in \hat{\Sigma}$.  Here the dual group is 
\begin{equation}
 \hat{\Sigma} = 
\begin{cases}
\R^{N-1} &\text{if } \Sigma = \R^{N-1} \\
\Z^{N-1} &\text{if } \Sigma = \T^{N-1}.
\end{cases}
\end{equation}
We will use the fact that the Fourier transform with respect to $x$ commutes with derivatives with respect to $y$ (both classical and weak derivatives). 

We will also use the Sobolev spaces $H^s(\Sigma)$, defined through the norm
\begin{equation}
||f||_{H^s(\Sigma)}^2=\int_{{\hat\Sigma}}(1+|\xi|^2)^{s}|\mathcal F f(\xi)|^2 d\xi 
\end{equation}
and the space $H^s_y(L^2_x)$, defined by
\begin{equation}
||f||_{H^s_x(L^2_y)}^2=\int_0^\ell \int_{{\hat\Sigma}}(1+|\xi|^2)^{s}|\mathcal F f(\xi,y)|^2 d\xi dy.
\end{equation}

\subsection{Function spaces and weak forms}

Here we quickly survey some of the function spaces used in defining a weak formulation of \eqref{eq:fullstrong}.  We will often use $L^2_T(X)$ to mean $L^2([0,T];X)$ and $L^2_y(X)$ to mean $L^2([0,\ell];X)$.  We define $H^1((0,\ell))$ to be the Sobolev space of functions $u\in L^2((0,\ell))$ with weak derivatives $u' = \p_y u \in L^2((0,\ell))$.  We define $H^1(\Omega)$ to be the Sobolev space of functions $u\in L^2(\Omega)$ with weak derivatives $\nabla u\in L^2(\Omega)$.  We equip both of these with the standard norms and inner-products.  We will use $(\cdot,\cdot)$ to denote an $L^2$ inner product and $\langle\cdot,\cdot\rangle$ to denote an $H^1$ inner product.

In order to accommodate the no-slip condition at the bottom of $\Omega$, it is convenient to define the space
\begin{equation}
\Hbot((0,\ell))=\{u\in H^1((0,\ell))\mid u(0)=0\} 
\end{equation}
and endow it with the inner product $\langle u,v\rangle_{\Hbot}=(u',v')$.  Similarly, we define
\begin{equation}
\Hbot(\Omega)=\{u\in H^1(\Omega)\mid u=0\text{ on }\{y=0\}\},
\end{equation}
endowed with the inner product $\langle u,v\rangle_{\Hbot}=(\nabla u,\nabla v)$. By the Poincar\'e inequality (see Theorem~\ref{poincare} in the appendix), these give norms equivalent to the standard one on $H^1$. Given $\xi\in\R^{N-1}$, let
\begin{equation}
\Hxi=\{u\in H^1((0,\ell))\mid u=0\text{ on }\{y=0\}\}
\end{equation}
with the inner product
\begin{equation}
\langle u,v\rangle_{\Hxi}=(u', v')+4\pi^2|\xi|^2(u,v). 
\end{equation}
This is essentially the $\Hbot(\Omega)$ inner product for a particular Fourier frequency, since it is easy to see that
\begin{equation}
\langle u,v\rangle_{\Hbot(\Omega)}=\int \langle\hat u(\xi),\hat v(\xi)\rangle_{\Hxi}d\xi. 
\end{equation}
We will use $[\cdot,\cdot]$ to denote the pairing of $H^1_\xi$ with its dual and the pairing of $\Hbot$ with its dual. The element of the dual will be the first argument.

On $\Omega=\Sigma\times (0,\ell)$, we also define
\begin{equation}
\Hhel=\{u\in \Hbot(\Omega;\R^N)\mid u=0\text{ on }\{y=0\}, \text{div }u=0\}
\end{equation}
with the inner product $\langle u,v\rangle_{\Hhel}=(\nabla u,\nabla v)$. Since this is inherited from its structure as $\Hbot(\Omega;\R^N)$, this inner product gives a norm equivalent to the $H^1$ norm. Also similarly to how the $\Hxi$ is essentially the $\Hbot(\Omega)$ inner product for a particular frequency, we can define a space $\Hxihel$ to be essentially the $\Hhel(\Omega)$ inner product for a particular frequency, namely
\begin{equation}
\Hxihel=\left\{u:(0,\ell)\to \C^N\,\middle|\,u\in (\Hxi)^N, u_N'+2\pi i\sum_{i=1}^{N-1}\xi_i u_i=0\right\}
\end{equation}
with the inner product inherited from $(\Hxi)^N$.

We will need to consider functions that are in $\Hhel$ and whose time derivatives are in $\Hhel^*$. For this to make sense, we need an embedding $\Hhel\hookrightarrow\Hhel^*$. To get such an embedding, we note that $\Hhel$ is dense in its $L^2$ closure $\overline{\Hhel}$ (with the $L^2$ norm). Then $\overline{\Hhel}$ is its own dual under the $L^2$ inner product, so we get an embedding $\overline{\Hhel}\hookrightarrow\Hhel^*$. Composing, we get an embedding $\Hhel\hookrightarrow\Hhel^*$. The same construction of embeddings will be used for $\Hbot, \Hxi$, and $\Hxihel$.

We have the following weak formulation of \eqref{eq:fullstrong}.  We say that $(u,h)$ is a weak solution to \eqref{eq:fullstrong} if $u\in L^2_{\text{loc}}([0,\infty);\Hhel)$, $\dt u \in L^2_{\text{loc}}([0,\infty);(\Hhel)^\ast)$, $h \in L^2_{\text{loc}}([0,\infty);L^2(\Sigma))$, $M(h) \in L^2_{\text{loc}}([0,\infty);L^2(\Sigma))$, $\dt h \in L^2_{\text{loc}}([0,\infty);H^{1/2}(\Sigma))$, and 
\begin{equation}\label{eq:fullweak}
\left\{\text{
\begin{tabular}{ l l }
  $[\partial_tu,v]_{\Hhel^*,\Hhel}+\half({\mathbb D}u,{\mathbb D}v)+\int_{\{y=\ell\}}M(h)v_N=0$ &$\forall\,v\in L^2_T(\Hhel)$, for a.\,e. $t$ \\
	$\Div\ u=0$ \\
	$\partial_t h=u_N$ & on $\{y=\ell\}$\\
	$u=u_0$ & at $t=0$\\
	$h=h_0$ & at $t=0$\\
\end{tabular}}\right.
\end{equation}
with the extra condition 
\begin{equation}\label{eq_zero_avg}
\int_{\T^{N-1}} h(x,t) dx=0 
\end{equation}
when $\Sigma= \T^{N-1}$.

\subsection{Some useful estimates  }

Here we derive some estimates that will later be useful in various calculations.

Let $v:(0,\ell)\to\R$ satisfy $v\in H^1_\xi$. Then
\begin{equation}
||v||_{\Hxi}^2=||v'||_{L^2}^2+4\pi^2|\xi|^2 ||v||_{L^2}^2\gesim (1+|\xi|^2)||v||_{L^2}^2
\end{equation}
by the Poincar\'e inequality, Theorem \ref{poincare} in the appendix. Thus
\begin{equation}\label{eq:HxiL2bound}
||v||_{L^2}\lesim\frac{||v||_{\Hxi}}{1+|\xi|}.
\end{equation}

For $\varphi\in C^\infty([0,\ell])$ with $\varphi(0)=0$, we may compute
\begin{equation}
\begin{split}
(1+|\xi|)\varphi(\ell)^2&\le\left|\int_0^\ell \varphi'(y) dy\right|^2+|\xi|\int_0^\ell 2\varphi'(y)\varphi(y) dy\\
&\le \ell\int_0^\ell \varphi'(y)^2 dy+2|\xi|\sqrt{\int_0^\ell \varphi'(y)^2 dy}\sqrt{\int_0^\ell \varphi(y)^2 dy}\\
&\le||\varphi||_{\Hxi}\left(\ell||\varphi||_{\Hxi}+\frac{1}{\pi}||\varphi||_{\Hxi} \right)\le(\ell+1)||\varphi||_{\Hxi}^2,
\end{split}
\end{equation}
so
\begin{equation}\label{eq:endpointestimate}
\varphi(\ell)^2\le\frac{1+\ell}{1+|\xi|} ||\varphi||_{\Hxi}^2
\end{equation}
for all $\xi$. We use density of such smooth functions in $\Hxi$ to extend this to all $\varphi\in\Hxi$. Using this, we get 
\begin{equation}\label{eq:randestimate1}
a\varphi(\ell)\ge -\frac{1+\ell}{1+|\xi|}|a|^2-\frac{1+|\xi|}{4(1+\ell)}\varphi(\ell)^2\ge -\frac{1+\ell}{1+|\xi|}|a|^2-\frac{1}{4}||\varphi||_{\Hxi}^2.
\end{equation}

For the remaining three estimates, we assume that $\Sigma=\T^{N-1}$, consider $w\in \Hbot(\Omega)$, and fix some $\xi\in \Z^{N-1}$. We then define $\tilde u:\Omega\to\R$ via $\tilde u(x,y)=\hat u(\xi,y,t)e^{2\pi i\xi\cdot x}$ (that is, $\tilde u$ is just a single Fourier mode of $u$). Then 
\begin{equation}
\widehat{\nabla u}(\xi)e^{2\pi i\xi\cdot x}=\nabla \tilde u(x,y) \text{ and }\widehat{\mathbb D u}(\xi)e^{2\pi i\xi\cdot x}=\mathbb D \tilde u(x,y). 
\end{equation}
If we define $\varphi(y)=\hat u(\xi,y)$, then \eqref{eq:endpointestimate} gives us
\begin{equation}\label{eq:randestimate2}
\begin{split}
|\hat u(\xi,\ell)|^2&\le\frac{1+\ell}{1+|\xi|} ||\hat u(\xi,\cdot)||_{\Hxi}^2=\frac{1+\ell}{1+|\xi|} ||\tilde u||_{\Hbot(\Omega)}^2=\frac{1+\ell}{1+|\xi|} ||\nabla \tilde u||_{L^2(\Omega)}^2\nonumber\\
|\hat u(\xi,\ell)|^2&\le\frac{1+\ell}{1+|\xi|} ||\widehat{\nabla u}(\xi)||_{L^2(0,\ell)}^2.
\end{split}
\end{equation}

We can also use the Poincar\'e inequality on $(0,\ell)$ to see that
\begin{equation}
||\widehat{\nabla u}(\xi,\cdot)||_{L^2(0,\ell)}^2=||\hat u(\xi,\cdot)||_{\Hxi}^2\gesim \left|\left|\frac{d}{dy}\hat u(\xi,\cdot)\right|\right|_{L^2(0,\ell)}^2+|\xi|^2||\hat u(\xi,\cdot)||_{L^2(0,\ell)}^2\gesim (1+|\xi|^2)||\hat u(\xi,\cdot)||_{L^2(0,\ell)}^2,
\end{equation}
so
\begin{equation}\label{ineq:fourierpoincare}
||\hat u(\xi,\cdot)||_{L^2(0,\ell)}\lesim\frac{1}{1+|\xi|}||\widehat{\nabla u}(\xi,\cdot)||.
\end{equation}
We may also use Korn's inequality,  Theorem \ref{korn}, to calculate
\begin{equation}\label{ineq:fourierkorn}
\begin{split}
||\widehat{\nabla u}(\xi)||_{L^2(0,\ell)}^2&=||\widehat{\nabla u}(\xi)e^{2\pi i \xi\cdot x}||_{L^2(\Omega)}^2=||\nabla \tilde u||_{L^2(\Omega)}^2\lesim ||\mathbb D \tilde u||_{L^2(\Omega)}^2=||\widehat{\mathbb D u}(\xi)e^{2\pi i \xi\cdot x}||_{L^2(\Omega)}^2\\
||\widehat{\nabla u}(\xi)||_{L^2(0,\ell)}^2&\lesim||\widehat{\mathbb Du}(\xi)||_{L^2(0,\ell)}^2,
\end{split}
\end{equation}
where the constant suppressed by $\lesim$ may depend on $\ell$, but not on $\xi$.

\section{Decay analysis at a fixed frequency }

Our goal in this section is to study the decay properties of the energy $\mathcal{E}_\xi$, defined by \eqref{term:xienergy}, as a function of $\xi$.  We will first derive decay estimates and then show that these are sharp in some cases.

\subsection{Decay of Fourier modes }

We now turn to the question of determining the rate of decay of $\mathcal{E}_\xi(t)$.  In order to treat the cases $\Sigma = \T^{N-1}$ and $\Sigma = \R^{N-1}$ in a unified manner, we begin by examining $\xi \neq 0$.

\begin{thm}\label{thm:xidecayrate}
Suppose that $u$ is a weak solution to $\eqref{eq:fullweak}$.  There exist constants $b,c>0$ such that for almost every $\xi \in \hat{\Sigma} \backslash \{0\}$ we have that
\begin{equation}\label{eq:decayrate}
\mathcal E_\xi(t)\le b\mathcal E_\xi(0)\exp\left(-c\frac{|\xi|^2\mu(\xi)}{(1+|\xi|)^3}t\right),
\end{equation}
where $\mathcal{E}_\xi$ is defined by \eqref{term:xienergy}.
\end{thm}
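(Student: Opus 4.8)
The plan is to reduce to a single horizontal frequency via the Fourier transform, establish a frequency-localized energy--dissipation identity, and then repair the failure of the dissipation to be coercive over the surface energy by adding to the energy a small multiple of an interaction functional built from a tailor-made divergence-free test profile; the weight of that multiple will be chosen to be exactly $|\xi|^2\mu(\xi)/(1+|\xi|)^3$, which forces the asserted rate.

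First I would apply $\mathcal F$ in $x$ to the weak formulation \eqref{eq:fullweak}. For almost every $\xi\in\hat\Sigma$ this produces a weak problem on $(0,\ell)$ for $(\hat u(\xi,\cdot,t),\hat h(\xi,t))$: the kinematic relation becomes $\partial_t\hat h(\xi)=\hat u_N(\xi,\ell)$, and for every profile $w\in\Hxihel$ and a.e.\ $t$ one has $[\partial_t\hat u(\xi),w]+\tfrac12(\widehat{\mathbb{D}u}(\xi),\mathbb D_\xi w)+\mu(\xi)\hat h(\xi)\,\overline{w_N(\ell)}=0$, where $\mathbb D_\xi w$ is the symmetrized gradient of $w(y)e^{2\pi i\xi\cdot x}$ restricted to that mode (each horizontal $\partial_j$ replaced by $2\pi i\xi_j$). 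Taking $w=\hat u(\xi,\cdot,t)$ and using $\partial_t\hat h(\xi)=\hat u_N(\xi,\ell)$ yields $\tfrac{d}{dt}\mathcal E_\xi+\mathcal D_\xi=0$ with $\mathcal D_\xi:=\tfrac12\|\widehat{\mathbb{D}u}(\xi,\cdot)\|_{L^2(0,\ell)}^2$. By Korn \eqref{ineq:fourierkorn} and Poincar\'e \eqref{ineq:fourierpoincare} we get $\mathcal D_\xi\gtrsim(1+|\xi|^2)\int_0^\ell|\hat u(\xi,y)|^2\,dy$, so the dissipation already controls the velocity part of $\mathcal E_\xi$ at rate $1+|\xi|^2$, which dominates $|\xi|^2\mu(\xi)/(1+|\xi|)^3$ thanks to $\mu\lesssim(1+|\xi|)^3$ from \eqref{mu_assump}. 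The entire difficulty is therefore to control the surface contribution $\mu(\xi)|\hat h(\xi)|^2$ by the dissipation.

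To do that I would, for each $\xi\neq0$, construct a profile $\psi_\xi\in\Hxihel$ with $\psi_\xi(0)=0$ and $(\psi_\xi)_N(\ell)=1$; the defining constraint $(\psi_\xi)_N'+2\pi i\,\xi\cdot(\psi_\xi)_{\mathrm{hor}}=0$ of $\Hxihel$ is solvable precisely because $\xi\neq0$ (e.g.\ let $(\psi_\xi)_N$ be a fixed smooth bump with value $1$ at $y=\ell$, concentrated in $[\ell-\delta_\xi,\ell]$ with $\delta_\xi\asymp\min(\ell,|\xi|^{-1})$, and set $(\psi_\xi)_{\mathrm{hor}}=-(2\pi i|\xi|)^{-1}(\psi_\xi)_N'\,\xi/|\xi|$), and one records the resulting bounds on $\|\psi_\xi\|_{L^2(0,\ell)}$ and $\|\psi_\xi'\|_{L^2(0,\ell)}$. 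Testing the momentum equation against $\psi_\xi$ kills the pressure and, via $(\psi_\xi)_N(\ell)=1$, produces the term $\mu(\xi)|\hat h(\xi)|^2$; this motivates the interaction functional $I_\xi(t):=\mathrm{Re}\int_0^\ell\hat u(\xi,y,t)\cdot\overline{\hat h(\xi,t)\psi_\xi(y)}\,dy$, for which, writing $\tfrac{d}{dt}I_\xi=\mathrm{Re}\,[\partial_t\hat u(\xi),\psi_\xi]+\mathrm{Re}\int_0^\ell\hat u(\xi)\cdot\overline{\partial_t\hat h(\xi)\,\psi_\xi}$ and inserting the equation, one obtains $\tfrac{d}{dt}I_\xi\le-\mu(\xi)|\hat h(\xi)|^2+(\text{errors})$, the errors being handled by Cauchy--Schwarz, $\partial_t\hat h(\xi)=\hat u_N(\xi,\ell)$, the trace estimate \eqref{eq:randestimate2}, and Korn/Poincar\'e, in terms of $\mathcal D_\xi$, $|\hat h(\xi)|^2$ and the norms of $\psi_\xi$. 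Then I would set $\mathcal F_\xi:=\mathcal E_\xi+\lambda_\xi I_\xi$ with $\lambda_\xi:=\kappa\,|\xi|^2\mu(\xi)/(1+|\xi|)^3$ for a small absolute $\kappa$: the bounds on $\psi_\xi$ make $\lambda_\xi$ small enough that $\tfrac12\mathcal E_\xi\le\mathcal F_\xi\le\tfrac32\mathcal E_\xi$ and that the error terms in $\lambda_\xi\tfrac{d}{dt}I_\xi$ are absorbed, partly into $\tfrac12\mathcal D_\xi$ and partly into half of the gain $-\lambda_\xi\mu(\xi)|\hat h(\xi)|^2$, leaving $\tfrac{d}{dt}\mathcal F_\xi\le-\tfrac12\mathcal D_\xi-\tfrac12\lambda_\xi\mu(\xi)|\hat h(\xi)|^2$. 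Combining this with $\mathcal D_\xi\gtrsim(1+|\xi|^2)\int_0^\ell|\hat u(\xi)|^2$ and $(1+|\xi|^2)\gtrsim\lambda_\xi$ gives $\tfrac{d}{dt}\mathcal F_\xi\le-c\,\tfrac{|\xi|^2\mu(\xi)}{(1+|\xi|)^3}\mathcal F_\xi$, and Gr\"onwall together with $\mathcal E_\xi\le2\mathcal F_\xi$ yields \eqref{eq:decayrate} with $b=3$.

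The main obstacle is the construction and estimation of the family $\{\psi_\xi\}$: one needs profiles whose $L^2$ and $H^1(0,\ell)$ norms are small enough, after the natural $\xi$-dependent rescaling, that the single choice $\lambda_\xi\asymp|\xi|^2\mu(\xi)/(1+|\xi|)^3$ simultaneously keeps $\mathcal F_\xi\asymp\mathcal E_\xi$ and dominates every cross term by $\mathcal D_\xi$ --- this balancing is exactly what pins down the exponent $(1+|\xi|)^{-3}$, and it is cleanest to treat the regimes $|\xi|\lesssim1$ and $|\xi|\gtrsim1$ separately (only the latter occurs when $\Sigma=\T^{N-1}$, since there $\xi\neq0$ forces $|\xi|\ge1$). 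A secondary, routine point is justifying the calculus at the level of the weak formulation: that $\mathcal E_\xi$, $I_\xi$, and $\mathcal F_\xi$ are absolutely continuous in $t$ with the stated derivatives and that $\psi_\xi$ is an admissible test profile; these follow from the regularity built into the definition of weak solution.
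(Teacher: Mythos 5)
Your proposal is correct and follows essentially the same route as the paper: a frequency-localized energy--dissipation identity plus a perturbed energy $\mathcal E_\xi+\lambda_\xi I_\xi$ built from a divergence-free profile with unit normal trace at $y=\ell$, with $\lambda_\xi\asymp|\xi|^2\mu(\xi)/(1+|\xi|)^3$ chosen by the same absorption balancing. The only cosmetic difference is the profile itself --- the paper takes $a_\xi(y)=\sinh(|\xi|y)/\sinh(|\xi|\ell)$ with horizontal part $\tfrac{i\xi}{2\pi|\xi|^2}a_\xi'$ rather than your boundary-layer bump of width $\asymp\min(\ell,|\xi|^{-1})$, but both yield the identical norm scalings, and the paper handles the $\Sigma=\R^{N-1}$ case by the same smearing-in-$\xi$ device you implicitly invoke with ``for almost every $\xi$.''
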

\begin{proof}
Throughout the proof we will abbreviate $L^2_y = L^2((0,\ell))$.  We will initially prove the result in the case $\Sigma = \T^{N-1}$, in which case $\hat{\Sigma} = \Z^{N-1}$.  Fix $\xi \in \hat{\Sigma} \backslash \{0\}$.  Upon complexifying the weak formulation of \eqref{eq:fullweak}, we have that
\begin{equation}\label{eq:weakrestate}
[\partial_t u,v]_{(\Hhel)^*,\Hhel}+\half({\mathbb D}u,{\mathbb D}v)_{L^2(\Omega)}+\int_{\{y=\ell\}} M(h)\overline{v_N} dx=0\qquad\forall\,v\in\ L^2_T(\Hhel) \text{ for a.\,e. }t,
\end{equation}
where here the test functions $v$ can take values in $\C^N$.  Here we introduced the complexification for convenience in working with Fourier modes.  Note that in \eqref{eq:weakrestate} we are taking $L^2$ inner products and dual pairings in spaces of complex-valued functions, so we have to be careful about complex conjugates.

Consider $V : \Omega \times [0,\infty) \to \C^N$ defined by 
\begin{equation}
 V(x,y,t) = \hat u(\xi,y,t)e^{2\pi i\xi\cdot x},
\end{equation}
which is just the component of $u$ from a single Fourier mode at spatial frequency $\xi$.  From the commutativity of the Fourier transform with weak derivatives and from the fact that $\hat V(\zeta,y,t)=0$ for $\zeta \ne \xi$, we get 
\begin{equation}
\begin{split}
[\partial_t u,V]_{(\Hhel)^*,\Hhel}& +\overline{[\partial_t u,V]_{(\Hhel)^*,\Hhel}} \\
&=[\partial_t  \hat u(\xi,\cdot,t),\hat V(\xi,\cdot,t)]_{(\Hxihel)^*,\Hxihel}+\overline{[\partial_t  \hat u(\xi,\cdot,t),\hat V(\xi,\cdot,t)]_{(\Hxihel)^*,\Hxihel}}.
\end{split}
\end{equation}
Applying Theorem~\ref{thm:CL2} from the appendix shows that
\begin{equation}
[\partial_t u,V]_{(\Hhel)^*,\Hhel}+\overline{[\partial_t u,V]_{(\Hhel)^*,\Hhel}}=\frac{d}{dt}||\hat u(\xi,\cdot,t)||_{L^2((0,\ell))}^2.
\end{equation}
Also, 
\begin{equation}
{\mathbb D}V(x,y,t)=\widehat{{\mathbb D}u}(\xi,y,t)e^{2\pi i\xi\cdot x}, 
\end{equation}
so we may compute
\begin{equation}
({\mathbb D}u,{\mathbb D}V)=||\widehat{{\mathbb D}u}(\xi,\cdot,t)||_{L^2((0,\ell))}^2.
\end{equation}
Finally,
\begin{equation}
\int_{\{y=\ell\}} M(h)\overline{V_N} dx=\sum_{\zeta\in\Z^{N-1}}\mu(\zeta) \hat h(\zeta,t)\overline{\hat V(\zeta,\ell,t)}=\mu(\xi) \hat h(\xi,t)\overline{\hat u(\xi,\ell,t)}=\mu(\xi) \hat h(\xi,t)\widehat{\partial_t\bar h}(\xi,t).
\end{equation}
Adding the equation above to its complex conjugate then gives
\begin{equation}
\int_{\{y=\ell\}} M(h)\overline{V_N} dx+\overline{\int_{\{y=\ell\}} M(h)\overline{V_N} dx}=\mu(\xi)\hat h(\xi,t)\widehat{\partial_t\bar h}(\xi,t)+\mu(\xi)\overline{\hat h}(\xi,t)\widehat{\partial_th}(\xi)=\frac{d}{dt}\mu(\xi)|\hat h(\xi,t)|^2,
\end{equation}
where in the last equality we have again used the commutativity of weak derivatives with the Fourier transform. Thus adding \eqref{eq:weakrestate} to its complex conjugate and dividing by 2 gives
\begin{equation}\label{eq:firsteq}
0=\frac{d}{dt}\left(\half||\hat u(\xi,\cdot,t)||_{L^2_y}^2+\frac{\mu(\xi)}{2}|\hat h(\xi,\cdot,t)|^2\right)+\half||\widehat{{\mathbb D}u}(\xi,\cdot,t)||_{L^2_y}^2
\end{equation}
for $t >0$.

We will now construct another function $v: \Omega \times [0,\infty) \to \C^N$ to plug into the weak formulation.  First, we set
\begin{equation}
a_\xi(y)=\frac{\sinh(|\xi|y)}{\sinh(|\xi|\ell)}. 
\end{equation}
Note that $a_\xi\in C^\infty([0,\ell])$ satisfies $a_\xi(\ell)=1$, $a_\xi(0)=0$, as well as the bounds
\begin{equation}
\int_0^\ell |a_\xi(y)|^2 dy\lesim \frac{1}{|\xi|+1}, \int_0^\ell |a'_\xi(y)|^2 dy\lesim 1+|\xi|, \int_0^\ell |a''_\xi(y)|^2 dy\lesim (1+|\xi|)^3. 
\end{equation}
We then let $w:(0,\ell)\to \C^N$ be given by 
\begin{equation}\label{eq:betawdef}
w(y)=\left(\frac{i\xi}{2\pi|\xi|^2} a'_\xi(y),a_\xi(y)\right),
\end{equation}
where $' = d/dy$, and then we let $v: \Omega \times [0,\infty) \to \C^N$ be given by 
\begin{equation}
 v(x,y,t) = \hat h(\xi,t)e^{2\pi i\xi\cdot x}w(y).
\end{equation}
Clearly $v(\cdot,t)\in \Hhel$. Then $\eqref{eq:weakrestate}$ gives us
\begin{equation}\label{eq:weakplugin}
0=[\partial_t u,v]_{(\Hbot)^*,\Hbot}+\half({\mathbb D}u,{\mathbb D}v)_{L^2(\Omega)}+\sum_{\Z^{N-1}} \mu(\zeta,t) \hat h(\zeta,t)\overline{\hat v_N}(\zeta,\ell,t) d\zeta.
\end{equation}
Also, from Theorem~\ref{thm:CL2} in the appendix and the polarization identity, we have
\begin{equation}
\frac{d}{dt}(u,v)_{L^2(\Omega)}=[\partial_t u,v]_{(\Hbot)^*,\Hbot}+\overline{[\partial_t v,u]_{(\Hbot)^*,\Hbot}}=[\partial_t u,v]_{(\Hbot)^*,\Hbot}+(u,\partial_t v)_{L^2(\Omega)}.
\end{equation}
Plugging this into $\eqref{eq:weakplugin}$ and using the formula for $\hat v_N$, we get
\begin{equation} 
0=\frac{d}{dt}(u,v)_{L^2(\Omega)}-(u,\partial_t v)_{L^2(\Omega)}+\half({\mathbb D}u,{\mathbb D}v)_{L^2(\Omega)}+\mu(\xi) |\hat h(\xi,t)|^2.
\end{equation}
Applying Parseval's theorem to this identity then shows that 
\begin{equation}\label{eq:secondeq}
0=\frac{d}{dt}(\hat{u}(\xi,\cdot,t),\hat{v}(\xi,\cdot,t))_{L^2_y}-(\hat{u}(\xi,\cdot,t),\partial_t \hat{v}(\xi,\cdot,t))_{L^2_y}+\half(\widehat{{\mathbb D}u}(\xi,\cdot,t),\widehat{{\mathbb D}v}(\xi,\cdot,t))_{L^2_y}+\mu(\xi) |\hat h(\xi,t)|^2.
\end{equation}

We will take a positive linear combination of $\eqref{eq:firsteq}$ and $\eqref{eq:secondeq}$, weighing the latter by a factor $\beta$, and we will use the fact that $\partial_t\hat h(\xi,t)=\hat u_N(\xi,\ell,t)$ (which holds since weak derivatives commute with the Fourier transform) to get
\begin{multline}\label{eq:fullenergydissipation}
0=\frac{d}{dt}\left(\half||\hat u(\xi,\cdot,t)||_{L^2_y}^2+\frac{\mu(\xi)}{2}|\hat h(\xi,t)|^2+\beta (\hat{u}(\xi,\cdot,t),\hat{v}(\xi,\cdot,t))_{L^2_y} \right)  \\
+\half||\widehat{{\mathbb D}u}(\xi,\cdot,t)||_{L^2_y}^2-\beta\hat u_N(\xi,\ell,t)(\hat u(\xi,\cdot,t),w)_{L^2_y}+\frac{\beta}{2}(\widehat{{\mathbb D}u}(\xi,\cdot,t),\widehat{{\mathbb D}v}(\xi,\cdot,t))_{L^2(\Omega)}+\beta\mu(\xi) |\hat h(\xi,t)|^2.
\end{multline}
We will employ this equality to prove the decay result, but to do so we must first absorb various terms.

We claim we can choose some $\beta=\beta(|\xi|)$ so that the following hold:
\begin{align}
|\beta (\hat{u}(\xi,\cdot,t),\hat{v}(\xi,\cdot,t))_{L^2_y} |&\le\frac{1}{4}||\hat u(\xi,\cdot,t) ||_{L^2_y}^2+\frac{\mu(\xi)}{4} |\hat h(\xi,t)|^2\label{eq:betarec1}\\
\left|\frac{\beta}{2}(\widehat{{\mathbb D}u}(\xi,\cdot,t),\widehat{{\mathbb D}v}(\xi,\cdot,t))_{L^2_y}\right|&\le \frac{1}{8}||\widehat{{\mathbb D}u}(\xi,\cdot,t)||_{L^2_y}^2+\frac{\beta\mu(\xi)}{2} |\hat h(\xi,t)|^2\label{eq:betarec2}\\
                    |\beta\hat u_N(\xi,\ell,t)(\hat u(\xi,\cdot,t),w)_{L^2_y}|&\le \frac{1}{8}||\widehat{{\mathbb D}u}(\xi,\cdot,t)||_{L^2_y}^2.\label{eq:betarec3}
\end{align}
We will now proceed to determine what upper bounds on $\beta$ these conditions require.  We first compute 
\begin{equation}
\begin{split}
|(\hat u(\xi,\cdot,t),\hat v(\xi,\cdot,t))_{L^2_y}| &\le\frac{\alpha}{2}||\hat u(\xi,\cdot,t)||_{L^2_y}^2+\frac{1}{2\alpha}||\hat v(\xi,\cdot,t)||_{L^2_y}^2 \\
& \le \frac{\alpha}{2}||\hat u(\xi,\cdot,t)||_{L^2_y}^2+\frac{|\hat h(\xi,t)|^2}{2\alpha}\int_0^\ell |w(y)|^2 dy\\
&\lesim \alpha||\hat u(\xi,\cdot,t)||_{L^2_y}^2+\frac{|\hat h(\xi,t)|^2}{\alpha}\left(\frac{1+|\xi|}{|\xi|^2}+\frac{1}{1+|\xi|}\right) \\
&\lesim \alpha||\hat u(\xi,\cdot,t)||_{L^2_y}^2+\frac{1+|\xi|}{\alpha|\xi|^2}|\hat h(\xi,t)|^2,
\end{split}
\end{equation}
so in order to satisfy \eqref{eq:betarec1}, we can take 
\begin{equation}
\alpha^2=\frac{1+|\xi|}{|\xi|^2\mu(\xi)} \text{ and }\beta\le c\sqrt{\frac{|\xi|^2\mu(\xi)}{1+|\xi|}}  
\end{equation}
for a sufficiently small constant $c>0$.

A similar computation shows that
\begin{equation}
\begin{split}
|(\widehat{{\mathbb D}u(\xi,\cdot,t)},\widehat{{\mathbb D}v}(\xi,\cdot,t))_{L^2_y}| 
&\le \frac{\alpha}{2}||\widehat{{\mathbb D}u}(\xi,\cdot,t)||_{L^2_y}^2+\frac{1}{2\alpha}||\widehat{{\mathbb D}v}(\xi,\cdot,t)||_{L^2_y}^2 \\
&\lesim \alpha||\widehat{{\mathbb D}u}(\xi,\cdot,t)||_{L^2_y}^2+\frac{1}{\alpha}||\widehat{\nabla v}(\xi,\cdot,t)||_{L^2_y}^2\\
&\lesim \alpha||\widehat{{\mathbb D}u}(\xi,\cdot,t)||_{L^2_y}^2+\frac{1}{\alpha}|\hat h(\xi,t)|^2\int_0^\ell |w'(y)|^2 +4\pi^2|\xi|^2 |w(y)|^2dy\\
&\lesim \alpha||\widehat{{\mathbb D}u}(\xi,\cdot,t)||_{L^2_y}^2+\frac{1}{\alpha}|\hat h(\xi,t)|^2\left(\frac{(1+|\xi|)^3}{|\xi|^2}+1+|\xi|+\frac{|\xi|^2}{1+|\xi|}\right)\\
&\lesim \alpha||\widehat{{\mathbb D}u}(\xi,t)||_{L^2_y}^2+\frac{(1+|\xi|)^3}{|\xi|^2\alpha}|\hat h(\xi,t)|^2,
\end{split}
\end{equation}
so in order to satisfy \eqref{eq:betarec2}, we can take 
\begin{equation}
\alpha=\frac{2(1+|\xi|)^3}{|\xi|^2\mu(\xi)\sqrt{c}} \text{ and } \beta\le c\frac{|\xi|^2\mu(\xi)}{(1+|\xi|)^3} 
\end{equation}
for a sufficiently small constant $c>0$.

Finally, using \eqref{eq:randestimate2} and \eqref{ineq:fourierpoincare} for the second inequality and \eqref{ineq:fourierkorn} on the fourth inequality, we get
\begin{equation}
\begin{split}
|\hat u_N(\xi,\ell,t)(\hat u(\xi,\cdot,t),w)_{L^2_y}|&\le|\hat u_N(\xi,\ell,t)|||\hat u(\xi,\cdot,t)||_{L^2_y}||w||_{L^2_y}\\
&\lesim\frac{1}{\sqrt{1+|\xi|}}||\widehat{\nabla u}(\xi,\cdot,t)||_{L^2_y}\left(\frac{1}{1+|\xi|}||\widehat{\nabla u}(\xi,\cdot,t)||_{L^2_y}\right)\left(\frac{1}{1+|\xi|}+\frac{1+|\xi|}{|\xi|^2}\right) \\
&\lesim \frac{1}{\sqrt{1+|\xi|}}||\widehat{\nabla u}(\xi,\cdot,t)||_{L^2_y}^2\left(\frac{1}{|\xi|^2}\right)\\
&\lesim \frac{1}{|\xi|^2\sqrt{1+|\xi|}}||\widehat{{\mathbb D}u}(\xi,\cdot,t)||_{L^2_y}^2.
\end{split}
\end{equation}
Then in order to satisfy \eqref{eq:betarec3}, we can take $\beta\le c|\xi|^2\sqrt{1+|\xi|}$ for a  sufficiently small constant $c$. Note that $\mu(\xi)\lesim (1+|\xi|)^3$ implies that the condition
\begin{equation}
\beta\le c\frac{|\xi|^2\mu(\xi)}{(1+|\xi|)^3} 
\end{equation}
is stronger than the other two conditions, so we can simply take 
\begin{equation}
\beta=c\frac{|\xi|^2\mu(\xi)}{(1+|\xi|)^3}. 
\end{equation}
in order to guarantee that \eqref{eq:betarec1}--\eqref{eq:betarec3} are satisfied.

With this choice of $\beta$ in \eqref{eq:fullenergydissipation}, we deduce that 
\begin{equation}
\mathcal E_\xi(t) \lesim \half||\hat u(\xi,\cdot,t)||_{L^2_y}^2+\frac{\mu(\xi)}{2}|\hat h(\xi,t)|^2+\beta(\hat{u}(\xi,\cdot,t),\hat{v}(\xi,\cdot,t))_{L^2_y}\lesim\mathcal E_\xi(t)
\end{equation}
and 
\begin{equation}
\begin{split}
\half||\widehat{{\mathbb D}u}(\xi,\cdot,t)||_{L^2_y}^2 
& -\beta\hat u_N(\xi,\ell,t)(\hat u(\xi,\cdot,t),w)_{L^2_y}  +\frac{\beta}{2}(\widehat{{\mathbb D}u}(\xi,\cdot,t),\widehat{{\mathbb D}v}(\xi,\cdot,t))_{L^2_y}+\beta\mu(\xi) |\hat h(\xi,t)|^2 \\
&\gesim||\widehat{{\mathbb D}u}(\xi,\cdot,t)||_{L^2_y}^2+\beta\mu(\xi) |\hat h(\xi,t)|^2\\
&\gesim ||\widehat{\nabla u}(\xi,\cdot,t)||_{L^2_y}^2+\beta\mu(\xi) |\hat h(\xi,t)|^2\\
&\gesim |\xi|^2||\hat{u}(\xi,\cdot,t)||_{L^2_y}^2+\beta\mu(\xi) |\hat h(\xi,\cdot,t)|^2\\
&\gesim \beta\mathcal E_\xi(t)\\
&\gesim \beta\left(\half||\hat u(\xi,\cdot,t)||_{L^2_y}^2+\frac{\mu(\xi)}{2}|\hat h(\xi,t)|^2+\beta(\hat{u}(\xi,\cdot,t),\hat{v}(\xi,\cdot,t))_{L^2_y}\right),
\end{split}
\end{equation}
where the second inequality follows from \eqref{ineq:fourierkorn}, and the fourth one follows from the bounds on $\beta$ derived above.  With these inequalities in hand, we can view  \eqref{eq:fullenergydissipation} as a differential inequality, which we can integrate to see that 
\begin{multline}
\half||\hat u(\xi,\cdot,t)||_{L^2_y}^2+\frac{\mu(\xi)}{2}|\hat h(\xi,t)|^2+\beta(\hat{u}(\xi,\cdot,t),\hat{v}(\xi,\cdot,t))_{L^2_y}\\
\le \left(\half||\hat u(\xi,\cdot,0)||_{L^2_y}^2+\frac{\mu(\xi)}{2}|\hat h(\xi,0)|^2+\beta(\hat{u}(\xi,\cdot,0),\hat{v}(\xi,\cdot,0))_{L^2_y}\right)\exp\left(-c\frac{|\xi|^2\mu(\xi)}{(1+|\xi|)^3}t\right)
\end{multline}
for some $c>0$. In turn, the above inequalities imply that that
\begin{equation}
\mathcal E_\xi(t)\le b\mathcal E_\xi(0)\exp\left(-c\frac{|\xi|^2\mu(\xi)}{(1+|\xi|)^3}t\right)
\end{equation}
for constants $b,c>0$.  This completes the proof of \eqref{eq:decayrate}  when $\Sigma = \T^{N-1}$.

We now indicate how to modify the above proof to handle the case $\Sigma = \R^{N-1}$.  In this case we cannot use $V$ and $v$ as above since $x \mapsto e^{2\pi i x\cdot \xi}$ does not belong to $L^2(\R^{N-1})$.  To get around this, we consider a real-valued $\varphi \in C_c^\infty(\R^{N-1})$ and define  
\begin{equation}
V(x,y,t) = \int_{\R^{N-1}} \varphi(\xi) \hat{u}(\xi,y,t)e^{2\pi i \xi\cdot x} d\xi \text{ and } v(x,y,t) = \int_{\R^{N-1}} \varphi(\xi) \hat h(\xi,t)e^{2\pi i\xi\cdot x}w(\xi,y)e^{2\pi i \xi\cdot x} d\xi,
\end{equation}
where in the latter equation we have written $w(\xi,y)$ for the mapping defined by \eqref{eq:betawdef}.  We may then argue as above and employ Fubini's theorem to see that 
\begin{equation}\label{eq:beta_inf_1}
0=\int_{\R^{N-1}} \left[ \frac{d}{dt}\left(\half||\hat u(\xi,\cdot,t)||_{L^2_y}^2+\frac{\mu(\xi)}{2}|\hat h(\xi,\cdot,t)|^2\right)+\half||\widehat{{\mathbb D}u}(\xi,\cdot,t)||_{L^2_y}^2 \right] \varphi(\xi) d\xi
\end{equation}
and 
\begin{multline}\label{eq:beta_inf_2}
\int_{\R^{N-1}} \left[ \frac{d}{dt}(\hat{u}(\xi,\cdot,t),\hat{v}(\xi,\cdot,t))_{L^2_y}-(\hat{u}(\xi,\cdot,t),\partial_t \hat{v}(\xi,\cdot,t))_{L^2_y}\right] \varphi(\xi) d\xi \\
+ \int_{\R^{N-1}} \left[ \half(\widehat{{\mathbb D}u}(\xi,\cdot,t),\widehat{{\mathbb D}v}(\xi,\cdot,t))_{L^2_y}+\mu(\xi) |\hat h(\xi,t)|^2 \right] \varphi(\xi) d\xi =0.
\end{multline}
Since \eqref{eq:beta_inf_1} and \eqref{eq:beta_inf_2} hold for all such $\varphi$ we conclude that \eqref{eq:firsteq} and \eqref{eq:secondeq} hold for almost every $\xi \in \hat{\Sigma} = \R^{N-1}$.  With these in hand we may then employ the above argument for a.e. $\xi$ to see that \eqref{eq:decayrate} holds for a.e. $\xi$.

\end{proof}

\begin{remark}
 In the case $\Sigma = \T^{N-1}$ we use counting measure, so Theorem \ref{thm:xidecayrate} tells us that \eqref{eq:decayrate} actually holds for all $\xi \in \Z^{N-1} \backslash \{0\}$.
\end{remark}

When $\Sigma = \T^{N-1}$, Theorem \ref{thm:xidecayrate} tells us nothing about the behavior of $\mathcal{E}_\xi$ when $\xi =0$.  We handle this case now by showing that $\mathcal{E}_0$ decays exponentially.

\begin{thm}\label{thm:xidecayrate_zero}
Suppose that $u$ is a weak solution to $\eqref{eq:fullweak}$ with $\Sigma=\T^{N-1}$. Then there is a constant $\delta>0$ such that
\begin{equation}\label{eq:zeromodedecayrate}
\mathcal E_0(t)\le \mathcal E_0(0)\exp\left(-\delta t\right),
\end{equation}
where $\mathcal{E}_0$ is defined by \eqref{term:xienergy} with $\xi =0$.
\end{thm}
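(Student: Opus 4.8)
The plan is to observe that, in the periodic case, the zero Fourier mode of $u$ decouples completely from the free surface $h$: it evolves as a damped Stokes flow with a no-slip bottom and no boundary forcing, so exponential decay follows at once from a Poincaré-type estimate.

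First I would extract two consequences of the constraints. The zero-average condition \eqref{eq_zero_avg} forces $\hat h(0,t)=0$ for all $t\ge 0$, so by \eqref{term:xienergy} the zero-mode energy is purely kinetic: $\mathcal{E}_0(t)=\tfrac12\|\hat u(0,\cdot,t)\|_{L^2((0,\ell))}^2$. Moreover, as already noted in the introduction, $\Div u=0$ forces $\hat u_N(0,y,t)=\int_{\T^{N-1}}u_N(x,y,t)\,dx=0$ for every $y\in(0,\ell)$; hence the zero mode of $u$ is purely horizontal, is itself a divergence-free vector field on $\Omega$ (its $x$-derivatives vanish and $\partial_y\hat u_N(0,\cdot,t)=0$), and vanishes on $\{y=0\}$.

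Next I would test the weak formulation \eqref{eq:fullweak} against $v=V$, where $V(x,y,t)=\hat u(0,y,t)$ is viewed as an $x$-independent element of $\Hhel$; this is the $\xi=0$ analogue of the function $V$ used in the proof of Theorem \ref{thm:xidecayrate}, and $V\in L^2_T(\Hhel)$ since $\|V(\cdot,t)\|_{\Hhel}\lesim\|u(\cdot,t)\|_{\Hhel}$. The decisive simplification is that the boundary term $\int_{\{y=\ell\}}M(h)\,\overline{V_N}$ vanishes because $V_N=\hat u_N(0,\ell,t)=0$, so all dependence on $h$ and on $M$ drops out. Adding the complex conjugate and invoking Theorem \ref{thm:CL2} together with the commutation of the Fourier transform with $\partial_y$, exactly as in the derivation of \eqref{eq:firsteq}, I obtain the clean identity $\tfrac{d}{dt}\mathcal{E}_0(t)+\tfrac12\|\widehat{\mathbb{D}u}(0,\cdot,t)\|_{L^2((0,\ell))}^2=0$ for a.e. $t>0$.

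Finally I would close the estimate by bounding the dissipation below by $\mathcal{E}_0$. Because the zero mode of $u$ is horizontal and $x$-independent, $|\widehat{\mathbb{D}u}(0,y,t)|^2=2|\partial_y\hat u(0,y,t)|^2$, and since $\hat u(0,0,t)=0$ the Poincaré inequality (Theorem \ref{poincare}) on $(0,\ell)$ — equivalently \eqref{ineq:fourierkorn} and \eqref{ineq:fourierpoincare} evaluated at $\xi=0$ — gives $\|\widehat{\mathbb{D}u}(0,\cdot,t)\|_{L^2((0,\ell))}^2\gesim\|\hat u(0,\cdot,t)\|_{L^2((0,\ell))}^2=2\mathcal{E}_0(t)$. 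This converts the identity above into the differential inequality $\tfrac{d}{dt}\mathcal{E}_0(t)\le-\delta\,\mathcal{E}_0(t)$, whence Grönwall's inequality yields \eqref{eq:zeromodedecayrate}. There is no serious analytic obstacle here; the conceptual point is the frequency-zero decoupling that removes the pressure and the boundary operator entirely, and the only care required is bookkeeping inherited from Theorem \ref{thm:xidecayrate} — verifying that $V$ is an admissible test function and that the pairing $[\partial_t u,V]$ genuinely produces $\tfrac{d}{dt}\tfrac12\|\hat u(0,\cdot,t)\|_{L^2((0,\ell))}^2$ via Theorem \ref{thm:CL2} at frequency zero.
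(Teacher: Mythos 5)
Your proposal is correct and follows essentially the same route as the paper: reduce $\mathcal E_0$ to the pure kinetic term using $\hat h(0,t)=0$, derive the $\xi=0$ energy–dissipation identity from the weak form, and close with Poincar\'e/Korn to get exponential decay via Gr\"onwall. The paper kills the boundary term by invoking $\hat h(0,t)=0$ directly while you emphasize the equivalent fact $\hat u_N(0,\ell,t)=0$ from $\operatorname{div} u=0$; these amount to the same observation, and the rest of your argument (including the explicit computation $|\widehat{\mathbb D u}(0,y,t)|^2=2|\partial_y\hat u(0,y,t)|^2$) matches the paper's appeal to \eqref{ineq:fourierpoincare} and \eqref{ineq:fourierkorn} at $\xi=0$.
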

\begin{proof}
Recall that in our notion of solution for $\Sigma = \T^{N-1}$ we require that $h$ satisfies the zero-average condition \eqref{eq_zero_avg}.  On the Fourier side this corresponds to the condition $\hat{h}(0,t) =0$ for $t \ge 0$.  Then 
\begin{equation}
 \mathcal{E}_0(t)=\half \int_0^\ell|\hat u(0,y,t)|^2dy+\frac{\mu(0)}{2}|\hat h(0,t)|^2 =\half \int_0^\ell|\hat u(0,y,t)|^2dy.
\end{equation}
Thus \eqref{eq:firsteq} in the proof of the Theorem \ref{thm:xidecayrate} becomes
\begin{equation}\label{eq:expdecay0mode}
0=\frac{d}{dt}\left(\half||\hat u(0,\cdot,t)||_{L^2_y}^2\right)+\half||\widehat{{\mathbb D}u}(0,\cdot,t)||_{L^2_y}^2.
\end{equation}
From \eqref{ineq:fourierpoincare} and \eqref{ineq:fourierkorn},  we get that
\begin{equation}
||\hat u(0,\cdot,t)||_{L^2_y}^2\lesim ||\widehat{{\mathbb D}u}(0,\cdot,t)||_{L^2_y}^2 
\end{equation}
which, together with \eqref{eq:expdecay0mode}, gives us the exponential decay in \eqref{eq:zeromodedecayrate}.
\end{proof}

\subsection{Sharpness for high frequencies  }

In this subsection, we demonstrate that the decay bounds given in $\eqref{thm:xidecayrate}$ are tight for large frequencies $\xi \in \hat{\Sigma}$.  In order to prove this, we must impose some growth constraints on $\mu$ that force the operator $M$ to be of order less than $3$.

\begin{thm}\label{thm:ansatzfullbigxi}
Suppose that 
\begin{equation}\label{eq:ansatzfull_assump}
 \lim_{\abs{\xi} \to \infty} \frac{\mu(\xi)}{\abs{\xi}^3} =0 \text{ and } 1 \ls \liminf_{\abs{\xi} \to \infty} \mu(\xi).
\end{equation}
Then for $\xi \in \hat{\Sigma}$ sufficiently large there is a solution $(u,h)$ to $\eqref{eq:fullstrong}$ such that 
\begin{equation}\label{eq:ansatzfull_lowerbound}
\mathcal{E}_\xi(t) \ge c_1 \exp\left(-c_2\frac{\mu(\xi)}{|\xi|}t\right)  \mathcal{E}_\xi(0)
\end{equation}
for constants $c_1,c_2>0$ independent of $\xi$.
\end{thm}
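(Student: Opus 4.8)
The plan is to produce, for each sufficiently large $\xi$, an explicit single‑frequency separated‑variables solution of \eqref{eq:fullstrong} whose temporal profile is a pure exponential $e^{\lambda(\xi)t}$, where $\lambda(\xi)$ is the ``slow,'' overdamped surface‑wave characteristic value. Such a solution has $\mathcal{E}_\xi(t)=e^{2\mathrm{Re}\,\lambda(\xi)t}\mathcal{E}_\xi(0)$ \emph{exactly}, so it suffices to locate a nontrivial $\lambda(\xi)$ with $|\mathrm{Re}\,\lambda(\xi)|\lesssim\mu(\xi)/|\xi|$; the two hypotheses on $\mu$ are precisely what make such a $\lambda(\xi)$ exist, and pinning it down uniformly in $|\xi|$ is the main obstacle.

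First I would reduce the construction to a linear ODE boundary value problem. Exploiting the constant coefficients and translation invariance of \eqref{eq:fullstrong}, seek $u(x,y,t)=e^{\lambda t}e^{2\pi i\xi\cdot x}W(y)$, $p=e^{\lambda t}e^{2\pi i\xi\cdot x}P(y)$, $h=e^{\lambda t}e^{2\pi i\xi\cdot x}\eta_0$. Decomposing the tangential part of $W$ into its $\hat\xi=\xi/|\xi|$ component and the components orthogonal to $\xi$, the orthogonal components uncouple into a scalar heat problem on $(0,\ell)$ with all characteristic values $\le-4\pi^2|\xi|^2$ (only fast modes), so for the slow mode we take $W=(\psi\,\hat\xi,\chi)$. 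Taking the divergence of the momentum equation gives $P''-4\pi^2|\xi|^2P=0$, so $P$ is a combination of $\cosh(2\pi|\xi|y),\sinh(2\pi|\xi|y)$; incompressibility gives $\psi=\tfrac{i}{2\pi|\xi|}\chi'$; the vertical momentum equation reads $\chi''-(4\pi^2|\xi|^2+\lambda)\chi=P'$, which (as $\lambda\neq0$) is solved by a combination of $\cosh(\omega y),\sinh(\omega y)$ with $\omega=\sqrt{4\pi^2|\xi|^2+\lambda}$ plus an explicit particular solution built from $P$. This leaves five constants subject to the five homogeneous conditions from \eqref{eq:fullstrong}: $\chi(0)=\chi'(0)=0$, $\psi'(\ell)+2\pi i|\xi|\chi(\ell)=0$, $P(\ell)-2\chi'(\ell)=\mu(\xi)\eta_0$, and $\lambda\eta_0=\chi(\ell)$. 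A nontrivial solution exists iff a determinant $F(\lambda,|\xi|,\mu(\xi))$ vanishes---the dispersion relation.

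The crux---and the step I expect to be hardest---is to solve $F(\lambda,|\xi|,\mu(\xi))=0$ as $|\xi|\to\infty$. After dividing out the dominant exponential factors $\sim e^{2\pi|\xi|\ell}$, writing $\omega=2\pi|\xi|\sqrt{1+\lambda/(4\pi^2|\xi|^2)}$, and expanding in the small parameters $\lambda/|\xi|^2$ and $\mu(\xi)/|\xi|^3$, the rigid‑bottom contributions are $O(e^{-c|\xi|\ell})$ and absorbed, and in the regime $\mu(\xi)=o(|\xi|^3)$---where the inviscid surface frequency obeys $\Omega_0^2\asymp\mu(\xi)|\xi|\ll|\xi|^4$, so the surface mode is overdamped---one finds a \emph{real} root with
\begin{equation}
\lambda(\xi)=-\,c_\ell\,\frac{\mu(\xi)}{|\xi|}\bigl(1+o(1)\bigr),\qquad|\xi|\to\infty,
\end{equation}
$c_\ell>0$ depending only on $\ell$, the leading balance being $2\cdot 4\pi^2|\xi|^2\lambda+\Omega_0^2\approx0$, which comes from an exact cancellation of the top‑order terms at $\lambda=0$. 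I would make this rigorous by fixing an interval $[-C\mu(\xi)/|\xi|,-c\mu(\xi)/|\xi|]$ on the negative real axis, showing $\lambda\mapsto F(\lambda,|\xi|,\mu(\xi))$ changes sign across it once $|\xi|$ is large, and invoking the intermediate value theorem. Here $\mu(\xi)/|\xi|^3\to0$ is exactly what keeps this root real and well separated from the fast viscous branches near $-4\pi^2|\xi|^2$, while $1\lesssim\liminf_{|\xi|\to\infty}\mu(\xi)$ keeps the surface branch nondegenerate and lets the constants be chosen uniformly in $\xi$. The real difficulty is bookkeeping: controlling every error term---the $\lambda^2$ corrections, the $O(e^{-c|\xi|\ell})$ finite‑depth terms, and the replacement of the clean limit by $o(1)$---\emph{uniformly} in $|\xi|$, so that $C$ is $\xi$‑independent.

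Finally, with $\lambda(\xi)$ and a nonzero $(W,P,\eta_0)$ in hand, I would set $(u,p,h)$ as above; by construction $u$ is divergence free, smooth on $\overline\Omega$, vanishes on $\{y=0\}$, and $(u,p,h)$ solves \eqref{eq:fullstrong} classically. Since the time dependence is the single exponential $e^{\lambda(\xi)t}$, we have $\hat u(\xi,y,t)=e^{\lambda(\xi)t}W(y)$ and $\hat h(\xi,t)=e^{\lambda(\xi)t}\eta_0$, so by \eqref{term:xienergy}
\begin{equation}
\mathcal{E}_\xi(t)=\bigl|e^{\lambda(\xi)t}\bigr|^2\left(\tfrac12\int_0^\ell|W(y)|^2\,dy+\tfrac{\mu(\xi)}{2}|\eta_0|^2\right)=e^{2\mathrm{Re}\,\lambda(\xi)t}\,\mathcal{E}_\xi(0),
\end{equation}
with $\mathcal{E}_\xi(0)>0$ since $(W,\eta_0)\neq0$ (a nonzero coefficient vector forces $W\not\equiv0$) and $\mu(\xi)\ge\theta$. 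As $\lambda(\xi)$ is real with $0<-\lambda(\xi)\le C\mu(\xi)/|\xi|$, this yields $\mathcal{E}_\xi(t)\ge\mathcal{E}_\xi(0)\exp\!\bigl(-2C\mu(\xi)|\xi|^{-1}t\bigr)$, i.e.\ \eqref{eq:ansatzfull_lowerbound} with $c_1=1$, $c_2=2C$ independent of $\xi$; comparing with Theorem~\ref{thm:xidecayrate} applied to this same solution also gives $-\lambda(\xi)\gtrsim|\xi|^2\mu(\xi)(1+|\xi|)^{-3}\asymp\mu(\xi)/|\xi|$, so $-\lambda(\xi)\asymp\mu(\xi)/|\xi|$ and the bound is sharp. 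For $\Sigma=\T^{N-1}$ these single‑mode functions already lie in the relevant spaces; for $\Sigma=\R^{N-1}$, where $e^{2\pi i\xi\cdot x}\notin L^2$, I would instead superpose the modes against a bump $\varphi\in C_c^\infty$ with $\varphi(\xi)\neq0$, which makes $u,h$ genuine functions while leaving the ratio $\mathcal{E}_\xi(t)/\mathcal{E}_\xi(0)=e^{2\mathrm{Re}\,\lambda(\xi)t}$ unchanged. As elsewhere in the paper, the construction may be carried out on the complexified system; real solutions follow by taking real and imaginary parts (and in the model case $\mu(\xi)=g+\sigma(2\pi|\xi|)^{2r}$, $\mu$ is even, so one can superpose the $\pm\xi$ modes directly).
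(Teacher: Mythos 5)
Your proposal is correct and follows essentially the same route as the paper: a single-frequency separated-variables ansatz with $y$-profiles built from $e^{\pm 2\pi|\xi|y}$ and $e^{\pm\sqrt{4\pi^2|\xi|^2+\lambda}\,y}$, reduction to the vanishing of a $4\times4$ determinant (the dispersion relation), and location of a real slow root of size $\asymp\mu(\xi)/|\xi|$ by exhibiting a sign change of the determinant across an interval $[\,c\,\mu(\xi)/|\xi|,\,C\,\mu(\xi)/|\xi|\,]$ and applying the intermediate value theorem, with the same bump-function superposition to handle $\Sigma=\R^{N-1}$. The "bookkeeping" step you flag as the main difficulty is exactly what the paper carries out via the explicit expansions of the determinant entries $\Gamma_{ij}$ under the hypothesis $\mu(\xi)=o(|\xi|^3)$.
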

\begin{proof}
We take the Fourier transform of $\eqref{eq:fullstrong}$ and consider frequencies $\xi \in \hat{\Sigma} \backslash \{0\}$.  Write  
\begin{equation}
\hat v(\xi,y,t)=\hat u_N(\xi,y,t) \text{ and }\hat w(\xi,y,t)=-i(\hat u_1,\ldots,\hat u_{N-1})(\xi,y,t). 
\end{equation}
To construct a solution to \eqref{eq:fullstrong} we will find a transformed velocity $(\hat{w},\hat{v})$ and pressure $\hat{p}$ satisfying the PDEs
\begin{align}
-2\pi\xi\cdot\hat w(\xi,y,t)+\partial_y\hat v(\xi,y,t)&=0\label{eq:fullbulk1}\\
\partial_t\hat w(\xi,y,t) + 2\pi\xi\hat p(\xi,y,t)+4\pi^2|\xi|^2\hat w(\xi,y,t)-\partial_y^2\hat w(\xi,y,t)&=0\label{eq:fullbulk2}\\
\partial_t\hat v(\xi,y,t)+\partial_y\hat p(\xi,y,t)+4\pi^2|\xi|^2\hat v(\xi,y,t)-\partial_y^2\hat v(\xi,y,t)&=0\label{eq:fullbulk3}
\end{align}
for $t \ge 0$ and $y \in (0,\ell)$, together with the boundary conditions
\begin{align}
2\pi\xi\hat v(\xi,\ell,t)+\partial_y\hat w(\xi,\ell,t)&=0\label{eq:fulltop1}\\
\hat p(\xi,\ell,t)-2\partial_y\hat v(\xi,\ell,t)&=\mu(\xi)\hat h(\xi,t) \label{eq:fulltop2}\\
\partial_t\hat h(\xi,t)&=\hat v(\xi,\ell,t)\label{eq:fulltop3}
\end{align}
and  
\begin{align}
\hat v(\xi,0,t)&=0\label{eq:fullbot1}\\
\hat w(\xi,0,t)&=0.\label{eq:fullbot2}
\end{align}
The solution to the transformed problem will be of the form
\begin{align}
\hat v(\xi,y,t) &=e^{-\rho t}\sum_{j=1}^4 v_j e^{a_j y}, \;
\hat w(\xi,y,t) =\xi e^{-\rho t}\sum_{j=1}^4 w_j e^{a_j y},  \label{eq:sharpform_1} \\
\hat p(\xi,y,t)&=e^{-\rho t}\sum_{j=1}^4 p_j e^{a_j y}, \text{ and }
\hat h(\xi,t)=e^{-\rho t} h_0\label{eq:sharpform_2}.
\end{align}
Note that $v_j,w_j,p_j,a_j,h_0,\rho$ depend on $\xi$, but we suppress this in the notation for the sake of brevity in what follows.  We will show that we can construct nontrivial solutions of this form with
\begin{equation}\label{bound:rho}
0 <  \frac{1}{4\pi} \left(\frac{\mu(\xi)}{|\xi|}\right) \le  \rho\le \left(1+\frac{1}{4\pi} \right)\left(\frac{\mu(\xi)}{|\xi|}\right).
\end{equation}
Once this is established, the bound \eqref{eq:ansatzfull_lowerbound} follows immediately.  When $\Sigma = \T^{N-1}$ we may use $\hat{v},\hat{w},\hat{p}$, and $\hat{h}$ as a single Fourier mode solution to \eqref{eq:fullstrong}, but when $\Sigma = \R^{N-1}$ we again run into the problem that $x \mapsto e^{2\pi i x\cdot \xi}$ does not belong to $L^2(\R^{N-1})$.  To get around this we instead solve the transformed equations in an open set $U \subseteq \R^{N-1}$ such that $\xi \in U$, and we then use a Fourier synthesis weighted by a function $\varphi \in C_c^\infty(U)$ such that $\varphi =1$ in a ball centered at $\xi$.  The resulting pair $(u,h)$ is then a weak solution to \eqref{eq:fullstrong} satisfying \eqref{eq:ansatzfull_lowerbound}.

If $\hat v,\hat w,\hat p,\hat h$ are of the form \eqref{eq:sharpform_1}--\eqref{eq:sharpform_2}, then $\eqref{eq:fullbulk1},\eqref{eq:fullbulk2},\eqref{eq:fullbulk3}$
become
\begin{align}
-2\pi|\xi|^2w_j+a_jv_j&=0\\
(4\pi^2|\xi|^2-\rho-a_j^2)w_j+2\pi p_j&=0\\
(4\pi^2|\xi|^2-\rho-a_j^2)v_j+a_jp_j&=0.
\end{align}
Combining the last two equations, we get
\begin{equation}
0=(4\pi^2|\xi|^2-\rho-a_j^2)a_jw_j-2\pi(4\pi^2|\xi|^2-\rho-a_j^2)v_j=(4\pi^2|\xi|^2-\rho-a_j^2)(a_jw_j-2\pi v_j),
\end{equation}
and upon substituting in the first equation we get
\begin{equation}
0=(4\pi^2|\xi|^2-\rho-a_j^2)(a_j^2-4\pi^2|\xi|^2)v_j=0.
\end{equation}
Then we set
\begin{equation}
a_1=2\pi|\xi|,  
a_2=-2\pi|\xi|, 
a_3=\sqrt{4\pi^2|\xi|^2-\rho}, 
a_4=-\sqrt{4\pi^2|\xi|^2-\rho},
\end{equation}
which are well-defined and distinct provided $\rho\ne 4\pi^2|\xi|^2$.
We also let
\begin{equation}
w_j=\frac{a_j}{2\pi|\xi|^2}v_j \text{ and }
p_j=-\frac{(4\pi^2|\xi|^2-\rho-a_j^2)}{a_j}v_j.
\end{equation}
We will also define $\eell=2\pi\ell$. Then the conditions \eqref{eq:fullbot1},\eqref{eq:fullbot2} on the bottom become
\begin{align}
0&=v_1+v_2+v_3+v_4\label{eq:1.0}\\
0&=v_1-v_2+\frac{\sqrt{4\pi^2|\xi|^2-\rho}}{2\pi|\xi|}v_3-\frac{\sqrt{4\pi^2|\xi|^2-\rho}}{2\pi|\xi|}v_4,\label{eq:2.0}
\end{align}
where \eqref{eq:fullbot2} has been multiplied by $|\xi|$. Condition \eqref{eq:fulltop1} at the top becomes
\begin{align}
0&=\left(2\pi+2\pi\right) e^{2\pi|\xi|\ell}v_1+\left(2\pi+2\pi\right) e^{-2\pi|\xi|\ell}v_2\nonumber\\
&\qquad+\left(2\pi+\frac{4\pi^2|\xi|^2-\rho}{2\pi|\xi|^2}\right) e^{\sqrt{4\pi^2|\xi|^2-\rho}\ell}v_3+\left(2\pi+\frac{4\pi^2|\xi|^2-\rho}{2\pi|\xi|^2}\right) e^{-\sqrt{4\pi^2|\xi|^2-\rho}\ell}v_4\nonumber\\
&\Leftrightarrow  \nonumber\\
0&=e^{\eell|\xi|}v_1+e^{-\eell|\xi|}v_2\nonumber\\
&\qquad+\left(1-\frac{\rho}{8\pi^2|\xi|^2}\right) e^{\sqrt{|\xi|^2-\frac{\rho}{4\pi^2}}\eell}v_3+\left(1-\frac{\rho}{8\pi^2|\xi|^2}\right) e^{-\sqrt{|\xi|^2-\frac{\rho}{4\pi^2}}\eell}v_4.\label{eq:3.0}
\end{align}
The conditions \eqref{eq:fulltop2} and \eqref{eq:fulltop3} at the top become
\begin{align}
\mu(\xi)h_0&=\left(\frac{\rho}{2\pi|\xi|}-4\pi|\xi|\right)e^{2\pi|\xi|\ell}v_1-\left(\frac{\rho}{2\pi|\xi|}-4\pi|\xi|\right)e^{-2\pi|\xi|\ell}v_2\nonumber\\
&\qquad-2\sqrt{4\pi^2|\xi|^2-\rho}e^{\sqrt{4\pi^2|\xi|^2-\rho}\ell}v_3+2\sqrt{4\pi^2|\xi|^2-\rho}e^{-\sqrt{4\pi^2|\xi|^2-\rho}\ell}v_4\label{eq:semi4}\\
0&=\rho h_0+e^{\eell|\xi|}v_1+e^{-\eell|\xi|}v_2+e^{\sqrt{|\xi|^2-\frac{\rho}{4\pi^2}}\eell}v_3+e^{-\sqrt{|\xi|^2-\frac{\rho}{4\pi^2}}\eell}v_4.\label{eq:hemi4}
\end{align}
Substituting \eqref{eq:semi4} into \eqref{eq:hemi4}, we get
\begin{align}
0&=\left(\frac{\frac{\rho}{2\pi|\xi|}-4\pi|\xi|}{\mu(\xi)}\rho+1\right)e^{\eell|\xi|}v_1+\left(\frac{-\frac{\rho}{2\pi|\xi|}+4\pi|\xi|}{\mu(\xi)}\rho+1\right)e^{-\eell|\xi|}v_2\nonumber\\
&\qquad +\left(\frac{-2\sqrt{4\pi^2|\xi|^2-\rho}}{\mu(\xi)}\rho+1\right)e^{\sqrt{|\xi|^2-\frac{\rho}{4\pi^2}}\eell}v_3+\left(\frac{2\sqrt{4\pi^2|\xi|^2-\rho}}{\mu(\xi)}\rho+1\right)e^{-\sqrt{|\xi|^2-\frac{\rho}{4\pi^2}}\eell}v_4.\label{eq:4.0}
\end{align}
Therefore, we have a solution if and only if $v_1,v_2,v_3,v_4$ satisfy $\eqref{eq:1.0}, \eqref{eq:2.0}, \eqref{eq:3.0}, \eqref{eq:4.0}$. This means we want to find a nonzero solution to
\begin{equation}\label{eq:matrixproduct0}
A(\rho,\xi)
\begin{pmatrix}
v_1\\
v_2\\
v_3\\
v_4
\end{pmatrix}=0
\end{equation}
where $A(\rho,\xi) \in \R^{4 \times 4}$ is given by
\begin{equation}
A(\rho,\xi)=\begin{pmatrix}\label{eq:expressionforA}
1 & 1 & 1 & 1\\
1 & -1 & \Gamma_{23} & -\Gamma_{23}\\
e^{\eell|\xi|} & e^{-\eell|\xi|} &  \Gamma_{33}e^{\eell \sqrt{|\xi|^2-\frac{\rho}{4\pi^2}}} & \Gamma_{33} e^{-\eell\sqrt{|\xi|^2-\frac{\rho}{4\pi^2}}}\\
\Gamma_{41}e^{\eell|\xi|} & \Gamma_{42}e^{-\eell|\xi|} &\Gamma_{43}e^{\eell \sqrt{|\xi|^2-\frac{\rho}{4\pi^2}}} & \Gamma_{44}e^{-\eell\sqrt{|\xi|^2-\frac{\rho}{4\pi^2}} }
\end{pmatrix}
\end{equation}
for 
\begin{equation}
\begin{split}
\Gamma_{23}&=\frac{\sqrt{4\pi^2|\xi|^2-\rho}}{2\pi|\xi|}, \; \Gamma_{33}=\left(1-\frac{\rho}{8\pi^2|\xi|^2}\right),\\
\Gamma_{41}&=\frac{\frac{\rho}{2\pi|\xi|}-4\pi|\xi|}{\mu(\xi)}\rho+1, \;
\Gamma_{42}=\frac{-\frac{\rho}{2\pi|\xi|}+4\pi|\xi|}{\mu(\xi)}\rho+1, \\
\Gamma_{43}&=\frac{-2\sqrt{4\pi^2|\xi|^2-\rho}}{\mu(\xi)}\rho+1, \;
\Gamma_{44}=\frac{2\sqrt{4\pi^2|\xi|^2-\rho}}{\mu(\xi)}\rho+1.
\end{split}
\end{equation}
Thus there is a solution to the equations with a given $\rho,\xi$ if and only if $\det A(\rho,\xi)=0$.

In light of \eqref{eq:ansatzfull_assump}, we may assume that $\abs{\xi}$ is large enough that 
\begin{equation}
 \left(1 + \frac{1}{4\pi}\right) \frac{\mu(\xi)}{\abs{\xi}^3} <1.
\end{equation}
We then define $\kappa_\pm >0$ via
\begin{equation}
 \kappa_\pm = \frac{1+4\pi \pm 1}{8\pi}. 
\end{equation}
Choosing $\rho_\pm=\kappa_\pm \frac{\mu(\xi)}{|\xi|}$, we then have that  
\begin{equation}
0 <  \rho_\pm \le \rho_+ = \left(1 + \frac{1}{4\pi}\right) \frac{\mu(\xi)}{\abs{\xi}} < \abs{\xi}^2.
\end{equation}
Note that, by \eqref{bound:rho}, with $\rho = \rho_\pm$ we have 
\begin{equation}
|\Gamma_{23}|,|\Gamma_{33}|,|\Gamma_{41}|,|\Gamma_{42}|,|\Gamma_{43}|,|\Gamma_{44}|\le |\xi|^{10},
\end{equation}
so if we write out $\det A(\rho,\xi)$ as a polynomial of the entries of $A(\rho,\xi)$ and use the fact that $0 <\rho = \rho_\pm <|\xi|^2$, we get
\begin{equation}\label{eq:A_determinant}
\det A(\rho,\xi)=(\Gamma_{23}-1)\left(\Gamma_{43}-\Gamma_{33}\Gamma_{41}\right)e^{\eell|\xi|+\eell \sqrt{|\xi|^2-\frac{\rho}{4\pi^2}}}+O(|\xi|^{40} e^{\eell|\xi|})+O(|\xi|^{40} e^{\eell\sqrt{|\xi|^2-\frac{\rho}{4\pi^2}}}).
\end{equation}

Next note that  
\begin{equation}\label{eq:gamma_bound_1}
\Gamma_{23}-1<0 \text{ and }|\Gamma_{23}-1| = 1 - \sqrt{1 - \frac{\kappa_\pm \mu(\xi)}{4\pi \abs{\xi}^3} } \le
\frac{\kappa_\pm \mu(\xi)}{4\pi \abs{\xi}^3} 
\end{equation}
for $\abs{\xi}$ sufficiently large.  We also have
\begin{equation}\label{eq:gamma_bound_2}
\begin{split} 
\Gamma_{43}-\Gamma_{33}\Gamma_{41}&=\frac{-2\kappa_\pm}{|\xi|}\sqrt{4\pi^2|\xi|^2-\kappa_\pm\frac{\mu(\xi)}{|\xi|}}+1-\left(1-\frac{\kappa_\pm\mu(\xi)}{8\pi^2|\xi|^3}\right)\left(1-4\pi\kappa_\pm+\frac{\kappa_\pm^2\mu(\xi)}{2\pi|\xi|^3}\right)\\
&=-4\pi\kappa_\pm+\frac{4\pi\kappa_\pm\mu(\xi)}{8\pi^2|\xi|^3}+1-(1-4\pi\kappa_\pm)+\left(\frac{\kappa_\pm(1-4\pi\kappa_\pm)}{8\pi^2}-\frac{\kappa_\pm^2}{2\pi}\right)\frac{\mu(\xi)}{|\xi|^3}+O\left(\frac{\mu(\xi)^2}{|\xi|^6}\right)\\
&=\frac{(1+4\pi)\kappa_\pm-8\pi\kappa_\pm^2}{8\pi^2}\frac{\mu(\xi)}{|\xi|^3}+O\left(\frac{\mu(\xi)^2}{|\xi|^6}\right)\\
&=(1+4\pi-8\pi\kappa_\pm)\frac{\kappa_\pm}{8\pi^2}\frac{\mu(\xi)}{|\xi|^3}+O\left(\frac{\mu(\xi)^2}{|\xi|^6}\right) \\
&= \mp  \frac{\kappa_\pm}{8\pi^2}\frac{\mu(\xi)}{|\xi|^3}+O\left(\frac{\mu(\xi)^2}{|\xi|^6}\right),
\end{split}
\end{equation}
where the last equality follows from the choice of $\kappa_\pm$.  Comparing \eqref{eq:A_determinant}--\eqref{eq:gamma_bound_2} and using \eqref{eq:ansatzfull_assump} then shows that for sufficiently large $\xi$
\begin{equation}
 \det A(\rho_+,\xi) >0 \text{ and } \det A(\rho_-,\xi) < 0.
\end{equation}
Consequently, the intermediate value theorem provides us with $\rho \in (\rho_-, \rho_+)$ such that $\det A(\rho,\xi) =0$, which in turn provides us with the desired solution.

\end{proof}

\subsection{Sharpness for low frequencies in the non-periodic case  }

Theorem \ref{thm:ansatzfullbigxi} tells us that the decay bounds of Theorem \ref{thm:xidecayrate} are tight for sufficiently high frequencies under reasonable restrictions on $\mu$.  In the non-periodic case, $\Sigma =\R^{N-1}$, this leaves open the question of tightness for low frequencies.  We address this now.

\begin{thm}\label{thm:ansatzfullsmallxi}
Suppose that 
\begin{equation}\label{eq:ansatz_small_assump}
 0 < g \neq \frac{3}{\ell^3} \text{ and } \lim_{\xi \to 0} (\mu(\xi) - g) = 0.
\end{equation}
Then for $\xi$ sufficiently small, there is a solution $(u,h)$ to $\eqref{eq:fullstrong}$ on $\Sigma=\R^{N-1}$ such that 
\begin{equation}\label{eq:ansatz_small_decay}
\mathcal{E}_\xi(t) \ge b_1 e^{-b_2|\xi|^2t} \mathcal{E}_\xi(0) 
\end{equation}
for constants $b_1,b_2>0$ independent of $\xi$.
\end{thm}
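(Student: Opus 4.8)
The plan is to mirror the construction in the proof of Theorem~\ref{thm:ansatzfullbigxi}, but tuned to the low-frequency scaling. As there, we look for a single Fourier mode solution of the form \eqref{eq:sharpform_1}--\eqref{eq:sharpform_2}; imposing the bulk equations and boundary conditions reduces this to finding a nonzero element of the kernel of the same $4\times 4$ matrix $A(\rho,\xi)$ from \eqref{eq:expressionforA}, i.e. to locating a zero of $\rho\mapsto\det A(\rho,\xi)$. The new feature is that the decay rate being chased is $|\xi|^2$ as $|\xi|\to 0$, so we substitute $\rho=z|\xi|^2$ and study $\det A(z|\xi|^2,\xi)$ for $z$ in a fixed bounded region and $|\xi|\to 0$. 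Using $\mu(\xi)\to g$ from \eqref{eq:ansatz_small_assump}, together with $e^{\pm 2\pi\ell|\xi|}=1+O(|\xi|)$, $\sqrt{|\xi|^2-\frac{\rho}{4\pi^2}}=|\xi|\sqrt{1-\frac{z}{4\pi^2}}$, and the expansions of the $\Gamma_{ij}$, every entry of $A(z|\xi|^2,\xi)$ converges as $|\xi|\to 0$; one checks, however, that the limiting matrix has two coincident rows, so $\det A(z|\xi|^2,\xi)\to 0$ for every $z$ and the leading behaviour must be extracted at the next order.

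The core of the argument is therefore to expand $\det A(z|\xi|^2,\xi)$ past its vanishing leading term. I would do this by row and column operations: subtract row one from rows three and four (so that the factors $e^{\pm 2\pi\ell|\xi|}-1=\pm 2\pi\ell|\xi|+O(|\xi|^2)$ and $\Gamma_{4j}-1=O(|\xi|^3)$ appear explicitly), and combine columns to exploit the even/odd parity in $y$ of the four exponential modes $e^{a_j y}$. Collecting the lowest surviving power of $|\xi|$ gives $\det A(z|\xi|^2,\xi)=|\xi|^m\bigl(R(z)+o(1)\bigr)$ for some explicit exponent $m$ and an analytic function $R$ (a rescaled limiting dispersion function), with the error uniform for $z$ in a bounded region. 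One then verifies that $R$ has a simple root $z_\ast$ with bounded real part; this $z_\ast$ is exactly the thin-film (lubrication) decay rate --- a positive multiple of $g\ell^3$ --- reflecting that the leading-order free-surface dynamics is $\partial_t h\approx\tfrac{g\ell^3}{3}\Delta h$. The role of the hypothesis $g\neq 3/\ell^3$ in \eqref{eq:ansatz_small_assump} is precisely to keep $z_\ast$ away from the value $z=4\pi^2$ at which the characteristic exponents $a_3=\sqrt{4\pi^2|\xi|^2-\rho}$ and $a_4=-a_3$ collide and the four-mode ansatz \eqref{eq:sharpform_1}--\eqref{eq:sharpform_2} degenerates; this makes $R$ well defined and non-degenerate near $z_\ast$, and keeps $z_\ast$ in the regime where the construction is valid.

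With a simple root $z_\ast$ of $R$ in hand, the conclusion follows as in Theorem~\ref{thm:ansatzfullbigxi}. By continuity of the zeros of an analytic family (Rouch\'e's theorem; in fact the root is real since $\mu$ is real-valued, so an intermediate-value argument also applies), for $|\xi|$ sufficiently small there is a zero $z(\xi)$ of $z\mapsto\det A(z|\xi|^2,\xi)$ with $z(\xi)\to z_\ast$, and in particular $|z(\xi)|\le C$ uniformly. Setting $\rho(\xi)=z(\xi)|\xi|^2$, the associated kernel vector $(v_1,v_2,v_3,v_4)\neq 0$ in \eqref{eq:matrixproduct0} determines a nontrivial single-mode solution of the transformed system; performing the $\varphi$-weighted Fourier synthesis over a small ball around $\xi$ exactly as in the proof of Theorem~\ref{thm:ansatzfullbigxi} (and, if a real-valued solution is wanted, symmetrizing over the ball around $-\xi$) yields a solution $(u,h)$ of \eqref{eq:fullstrong} on $\R^{N-1}$. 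For this solution the frequency-$\xi$ component decays purely exponentially in time, so by \eqref{term:xienergy} we have $\mathcal E_\xi(t)=e^{-2\,\mathrm{Re}\,\rho(\xi)\,t}\,\mathcal E_\xi(0)$; since $\mathrm{Re}\,\rho(\xi)=\mathrm{Re}\,z(\xi)\,|\xi|^2\le C|\xi|^2$, this gives \eqref{eq:ansatz_small_decay} with $b_1=1$ and $b_2=2C$.

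The main obstacle is the middle step: the naive $|\xi|\to 0$ limit of $\det A$ vanishes identically, so one must carry the determinant expansion far enough to identify the first non-vanishing order $|\xi|^m$ and the limiting function $R$, then check that $R$ has a suitable simple root and that the only obstruction is the degenerate configuration ruled out by $g\neq 3/\ell^3$. Everything else --- the ansatz, the reduction to $\det A=0$, the Fourier synthesis, and the passage from $\rho(\xi)$ to the energy bound --- is routine given Theorem~\ref{thm:ansatzfullbigxi} and Theorem~\ref{thm:xidecayrate}.
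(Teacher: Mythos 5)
Your proposal is essentially the paper's proof. You reduce to $\det A(\rho,\xi)=0$ exactly as in Theorem~\ref{thm:ansatzfullbigxi}, rescale $\rho\propto|\xi|^2$ (the paper writes $\rho=4\pi^2\kappa|\xi|^2$, so your $z$ is $4\pi^2\kappa$), correctly observe that the naive $\xi\to 0$ limit makes rows $1$ and $4$ of $A$ coincide so the determinant vanishes identically and must be expanded to the first non-vanishing order, extract that order by parity-based column combinations and row elimination (the paper forms $\tilde A$ with columns $(c_1\pm c_2)/2$, $(c_3\pm c_4)/2$, then eliminates rows), identify the limiting root $\kappa_*=g\ell^3/3$ (your $z_*=4\pi^2\kappa_*$), read the hypothesis $g\neq 3/\ell^3$ precisely as excluding $\kappa_*=1$ where $a_3=a_4$ collide, and lift the transformed mode to $\R^{N-1}$ by the same $\varphi$-weighted Fourier synthesis. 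The thin-film interpretation of $z_*$ is a nice gloss not in the paper.

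One small discrepancy: the paper explicitly flags that when $g\ell^3/3>1$ the quantity $\sqrt{1-\kappa}$ is imaginary near $\kappa_*$, so the entries of $A$ become genuinely complex along the real $\kappa$-axis and the perturbed root $\kappa(\xi)$ need not be real; for that reason the paper invokes the implicit function theorem on $\C\cong\R^2$ rather than the intermediate value theorem used in Theorem~\ref{thm:ansatzfullbigxi}. Your primary tool, Rouch\'e's theorem, is perfectly adequate here, since $\det A$ is holomorphic in $\kappa$ away from the branch point $\kappa=1$ and the limiting root is simple, and it serves the same role as the paper's IFT. But the parenthetical claim that realness of $\mu$ forces the root to be real, so that IVT also suffices, is exactly the assertion the paper warns against; drop it or justify it separately.
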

\begin{proof}
We argue as in the proof of Theorem \ref{thm:ansatzfullbigxi}, reducing to proving \eqref{eq:matrixproduct0} with $A(\rho,\xi)$ given by \eqref{eq:expressionforA} as the criterion for having a solution satisfying \eqref{eq:ansatz_small_decay}.

To verify the criterion we will introduce another free parameter $\kappa \in\C\wo\{1\}$ and then set $\rho=4\pi^2\kappa |\xi|^2$.  We insist that $\kappa \neq 1$ in order to guarantee that $a_3\ne a_4$.  In light of \eqref{eq:ansatz_small_assump}, we may rewrite the terms in $A(\rho,\xi)$  as
\begin{equation}
\begin{split}
\Gamma_{23}&=\frac{\sqrt{4\pi^2|\xi|^2-\rho}}{2\pi|\xi|}=\sqrt{1-\kappa}\\
\Gamma_{33}&=\left(1-\frac{\rho}{8\pi^2|\xi|^2}\right)=1-\frac{\kappa}{2}\\
\Gamma_{41}&=\frac{\frac{\rho}{2\pi|\xi|}-4\pi|\xi|}{\mu(\xi)}\rho+1=1+\frac{8\pi^3\kappa(\kappa-2)|\xi|^3}{g}+o(|\xi|^3)\\
\Gamma_{42}&=\frac{-\frac{\rho}{2\pi|\xi|}+4\pi|\xi|}{\mu(\xi)}\rho+1=1-\frac{8\pi^3\kappa(\kappa-2)|\xi|^3}{g}+o(|\xi|^3)\\
\Gamma_{43}&=\frac{-2\sqrt{4\pi^2|\xi|^2-\rho}}{\mu(\xi)}\rho+1=1-\frac{16\pi^3\kappa\sqrt{1-\kappa}|\xi|^3}{g}+o(|\xi|^3)\\
\Gamma_{44}&=\frac{2\sqrt{4\pi^2|\xi|^2-\rho}}{\mu(\xi)}\rho+1=1+\frac{16\pi^3\kappa\sqrt{1-\kappa}|\xi|^3}{g}-o(|\xi|^3),
\end{split}
\end{equation}
where here we write $o(\abs{\xi}^3)$ to mean a quantity vanishing faster than $\abs{\xi}^3$ as $\xi \to 0$, with the parameter $\kappa$ viewed as fixed.  Then 
\begin{equation}
A(\rho,\xi)=\begin{pmatrix}\label{eq:secondexpressionforA}
    1 & 1 & 1 & 1\\
		1 & -1 & \Gamma_{23} & -\Gamma_{23}\\
		e^{\eell|\xi|} & e^{-\eell|\xi|} &  \Gamma_{33}e^{\sqrt{1-\kappa}\eell|\xi|} & \Gamma_{33} e^{-\sqrt{1-\kappa}\eell|\xi|}\\
		\Gamma_{41}e^{\eell|\xi|} & \Gamma_{42}e^{-\eell|\xi|} &\Gamma_{43}e^{\sqrt{1-\kappa}\eell|\xi|} & \Gamma_{44}e^{-\sqrt{1-\kappa}\eell|\xi|}
\end{pmatrix}.
\end{equation}

We now refer to the columns of the matrix $A(\rho,\xi)$ as $c_1,c_2,c_3,c_4$. Let $\tilde{A}(\rho,\xi) \in \C^{4 \times 4}$ be the matrix with columns
\begin{equation}
 \frac{c_1+c_2}{2},\frac{c_3+c_4}{2}, \frac{c_1-c_2}{2}, \text{ and }\frac{c_3-c_4}{2}.
\end{equation}
The matrix $\tilde A$ can be obtained from $A$ by means of column operations, so $\det A(\rho,\xi)=0$ if and only if $\det \tilde{A}(\rho,\xi)=0$.  Then
\begin{equation}
\tilde{A}(\rho,\xi)=
\begin{pmatrix}
    1 & 1 & 0 & 0\\
		0 & 0 & 1 & \sqrt{1-\kappa}\\
		1+O(|\xi|^2) & 1-\frac\kappa 2 +O(|\xi|^2) & \lambda|\xi|+O(|\xi|^3) & \lambda(1-\frac\kappa 2)\sqrt{1-\kappa}|\xi|+O(|\xi|^3)\\
		1+\frac{\lambda^2}{2}|\xi|^2+o(|\xi|^3) & 
		1+\frac{\lambda^2(1-\kappa)}{2}|\xi|^2+o(|\xi|^3) & 
		\Delta_{43}+o(|\xi|^3) & 
		\Delta_{44}+o(|\xi|^3)
\end{pmatrix}
\end{equation}
for
\begin{equation}
\begin{split}
\Delta_{43}&=\lambda|\xi|+\left(\frac{\lambda^3}{6}+\frac{8\pi^3\kappa(\kappa-2)}{g}\right)|\xi|^3\\
\Delta_{44}&=\lambda\sqrt{1-\kappa}|\xi|+\left(\frac{\lambda^3(1-\kappa)^{3/2}}{6}-\frac{16\pi^3\kappa\sqrt{1-\kappa}}{g}\right)|\xi|^3,
\end{split}
\end{equation}
where again in $O(\cdot)$ and $o(\cdot)$ we refer to quantities with $\kappa$ fixed and $\xi \to 0$.

We will now perform a sequence of row and column operations on $\tilde{A}(\rho,\xi)$, which do not change the vanishing of the determinant. Subtracting the first row from the third,  $1+\frac{\lambda^2}{2}|\xi|^2$ times the first row from the fourth, and $\lambda|\xi|$ times the second row from the fourth, we get
\begin{equation}
\begin{pmatrix}
    1 & 1 & 0 & 0\\
		0 & 0 & 1 & \sqrt{1-\kappa}\\
		O(|\xi|^2) & -\frac\kappa 2 +O(|\xi|^2) & \lambda|\xi|+O(|\xi|^3) & \lambda(1-\frac\kappa 2)\sqrt{1-\kappa}|\xi|+O(|\xi|^3)\\
		o(|\xi|^3) & 
		\frac{-\kappa\lambda^2}{2}|\xi|^2+o(|\xi|^3) & 
		\left(\frac{\lambda^3}{6}+\frac{8\pi^3\kappa(\kappa-2)}{g}\right)|\xi|^3+o(|\xi|^3)& 
		\left(\frac{\lambda^3(1-\kappa)^{3/2}}{6}-\frac{16\pi^3\kappa\sqrt{1-\kappa}}{g}\right)|\xi|^3+o(|\xi|^3)
\end{pmatrix}.
\end{equation}
Subtracting the first column from the second and $\sqrt{1-\kappa}$ times the third column from the fourth,  we get
\begin{equation}
\begin{pmatrix}
    1 & 0 & 0 & 0\\
		0 & 0 & 1 & 0\\
		O(|\xi|^2) & -\frac\kappa 2 +O(|\xi|^2) & O(|\xi|) & -\lambda(\frac\kappa 2)\sqrt{1-\kappa}|\xi|+O(|\xi|^3)\\
		o(|\xi|^3)& 
		\frac{-\kappa\lambda^2}{2}|\xi|^2+o(|\xi|^3)& 
		O(|\xi|^3)& 
		\left(\frac{-\lambda^3\kappa\sqrt{1-\kappa}}{6}-\frac{8\pi^3\kappa^2\sqrt{1-\kappa}}{g}\right)|\xi|^3+o(|\xi|^3)
\end{pmatrix}.
\end{equation}
The vanishing of the determinant of this matrix, and hence of $\det A(\rho,\xi)$, is then equivalent to 
\begin{equation}
0=\left(-\lambda\frac{\kappa}{2}\sqrt{1-\kappa}|\xi|\right)\left(-\frac{\kappa\lambda^2}{2}|\xi|^2\right)-\left(\frac{-\kappa}{2}\right)\left(\frac{-\lambda^3\kappa\sqrt{1-\kappa}}{6}-\frac{8\pi^3\kappa^2\sqrt{1-\kappa}}{g}\right)|\xi|^3+o(|\xi|^3),
\end{equation}
which in turn is equivalent to 
\begin{multline}
 0=\kappa^2\sqrt{1-\kappa}|\xi|^3\left(\frac{\lambda^3}{4}+\frac{-\lambda^3}{12}-\frac{4\pi^3\kappa}{g}+o(1)\right) =\kappa^2\sqrt{1-\kappa}|\xi|^3\left(\frac{\lambda^3}{6}-\frac{4\pi^3\kappa}{g}+o(1)\right) \\
 =: \kappa^2\sqrt{1-\kappa}|\xi|^3 \Psi(\kappa,\xi).
\end{multline}
Again we note that the term $o(1)$ depends on $\kappa$, so we cannot directly set the term in parentheses to $0$ to choose $\kappa$.  However, this expression tells us how to choose the value of $\kappa$ when $\xi =0$.  Indeed, 
\begin{equation}
0 =  \Psi(\kappa,0) = \frac{\lambda^3}{6}-\frac{4\pi^3\kappa}{g} \Leftrightarrow \kappa=\frac{g\lambda^3}{24\pi^3}.
\end{equation}
We have now shown that when $\xi = 0$, $\mu(0) =g$, and $\kappa=\frac{g\lambda^3}{24\pi^3}$ we have that $\det A(\rho,\xi) =0$.

Let us now consider $\mu \in (0,\infty)$ as a parameter rather than as the function of $\xi$.  The formula for $A(\rho,\xi)$ shows that the map 
\begin{equation}
 (\xi,\mu,\kappa) \mapsto \det A(\rho,\xi)
\end{equation}
is $C^1$ in an open set containing the zero 
\begin{equation}
 \xi = 0, \mu = g, \kappa = \frac{g\lambda^3}{24\pi^3}.
\end{equation}
Moreover, the derivative of this map with respect to $\kappa$ at the zero is nontrivial, so we may apply the implicit function theorem to solve for $\kappa = \kappa(\xi,\mu)$ in a neighborhood of $(0,g)$ such that $\det A(\rho,\xi)=0$.  Choosing $\kappa(\xi,\mu(\xi))$ then provides the desired solutions with 
\begin{equation}
 \kappa=\frac{g\lambda^3}{24\pi^3}+o(1) \text{ as }\xi \to 0.
\end{equation}
Note that if $\frac{g\lambda^3}{24\pi^3}>1$, then $\sqrt{1-\kappa}$ is not real, so $\kappa$ may not be real. This is why we need the implicit function theorem, rather than just the intermediate value theorem as in the proof of Theorem \ref{thm:ansatzfullbigxi}.
\end{proof}

\section{The periodic problem }
 
Our goal in this section is to study the decay of the full energy $\mathcal{E}$, defined by \eqref{term:fullenergy}, in the periodic case, $\Sigma = \T^{N-1}$.  We will do this primarily within the context of the extra assumption that 
\begin{equation}\label{eq:mu_assump}
 \mu(\xi) = g+\sigma(2\pi |\xi|)^{2r}
\end{equation}
for $g,\sigma >0$.  When $r=1$, this is the same as linearized Navier-Stokes with a free boundary, gravity, and surface tension, while when $r=0$, this is the same as linearized Navier-Stokes with a free boundary, gravity, and no surface tension.

\subsection{Decay rates based on initial data  }

We begin our analysis of the decay of $\mathcal{E}$ by considering the case when $r \ge 1/2$ in \eqref{eq:mu_assump}, in which case we can prove exponential decay.

\begin{thm}\label{thm:torusdecayexponential}
Suppose that $(u,h)$ is a weak solution of $\eqref{eq:fullweak}$ on $\Sigma=\T^{N-1}$ and that $\mu$ is of the form \eqref{eq:mu_assump} with $r\ge\half$.  If $u_0\in L^2(\Omega)$ and $h_0\in H^{r}(\Sigma)$, then there are constants $c,K>0$, depending on $g,\sigma,r$ such that 
\begin{equation}\label{eq:torusdecayexponential_est}
\mathcal{E}(t)\le K \mathcal{E}(0) e^{-ct},
\end{equation}
where the total energy $\mathcal{E}(t)$ as defined in \eqref{term:fullenergy}.
\end{thm}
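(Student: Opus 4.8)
The plan is to obtain the theorem by Fourier-synthesizing the mode-by-mode decay estimates already proved in Theorems~\ref{thm:xidecayrate} and~\ref{thm:xidecayrate_zero}. Since $\Sigma = \T^{N-1}$, Parseval's theorem applied to \eqref{term:fullenergy} gives the orthogonal decomposition
\[
\mathcal{E}(t) = \sum_{\xi \in \Z^{N-1}} \mathcal{E}_\xi(t), \qquad \mathcal{E}_\xi(t) = \half \int_0^\ell |\hat u(\xi,y,t)|^2 \, dy + \frac{\mu(\xi)}{2}\,|\hat h(\xi,t)|^2 ,
\]
where we have used $\int_\Sigma M(h)h = \sum_\xi \mu(\xi)|\hat h(\xi)|^2$. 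Thus it suffices to bound each $\mathcal{E}_\xi(t)$ by $K\,e^{-ct}\mathcal{E}_\xi(0)$ with $K,c>0$ \emph{independent of} $\xi$, and then to note $\sum_\xi \mathcal{E}_\xi(0) = \mathcal{E}(0) < \infty$.

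First I would handle the nonzero modes. The key elementary observation is that every nonzero frequency on the integer lattice satisfies $|\xi|\ge 1$, so with $\mu(\xi) = g + \sigma(2\pi|\xi|)^{2r}$ and $r \ge \half$ one has, for all $\xi \in \Z^{N-1}\backslash\{0\}$,
\[
\frac{|\xi|^2 \mu(\xi)}{(1+|\xi|)^3} \;\ge\; \frac{|\xi|^2 \sigma(2\pi)^{2r}|\xi|^{2r}}{8|\xi|^3} \;=\; \frac{\sigma(2\pi)^{2r}}{8}\,|\xi|^{2r-1} \;\ge\; \frac{\sigma(2\pi)^{2r}}{8} \;=:\; c_0 > 0 ,
\]
since $(1+|\xi|)^3 \le 8|\xi|^3$ and $|\xi|^{2r-1}\ge 1$ when $r\ge\half$ and $|\xi|\ge 1$. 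Inserting this lower bound into \eqref{eq:decayrate} of Theorem~\ref{thm:xidecayrate} yields $\mathcal{E}_\xi(t) \le b\,\mathcal{E}_\xi(0)\,e^{-c\,c_0\,t}$ uniformly in $\xi \ne 0$. For the zero mode, Theorem~\ref{thm:xidecayrate_zero} gives directly $\mathcal{E}_0(t) \le \mathcal{E}_0(0)\,e^{-\delta t}$. Setting $K := \max\{b,1\}$ and $c_\ast := \min\{c\,c_0,\delta\} > 0$ (both depending only on $g,\sigma,r,\ell,N$), we get $\mathcal{E}_\xi(t) \le K\,\mathcal{E}_\xi(0)\,e^{-c_\ast t}$ for \emph{every} $\xi \in \Z^{N-1}$; summing over $\xi$ and invoking the decomposition above produces \eqref{eq:torusdecayexponential_est}.

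It remains to check that $\mathcal{E}(0)$ is finite under the hypotheses $u_0 \in L^2(\Omega)$ and $h_0 \in H^r(\Sigma)$. This is immediate: $\mathcal{E}(0) = \half\|u_0\|_{L^2(\Omega)}^2 + \half\sum_\xi \mu(\xi)|\hat h_0(\xi)|^2$, and since $\mu(\xi) \ls (1+|\xi|^2)^r$ we have $\sum_\xi \mu(\xi)|\hat h_0(\xi)|^2 \ls \|h_0\|_{H^r(\Sigma)}^2 < \infty$. There is no serious obstacle in this argument; the one place the hypothesis $r \ge \half$ is used is precisely the uniform lower bound $|\xi|^{2r-1}\ge 1$ for $|\xi|\ge 1$, which is where the dichotomy with the regime $r<\half$ originates: when $r<\half$ the per-mode rates $|\xi|^{2r-1}$ degenerate to $0$ as $|\xi|\to\infty$, so the termwise exponential bounds no longer combine into an exponential bound for $\mathcal{E}$, and one must instead trade frequency decay against Sobolev regularity of the data, as in Theorem~\ref{thm:torusdecayalgebraic}.
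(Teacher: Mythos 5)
Your proposal is correct and follows the same route as the paper: Fourier-decompose $\mathcal{E}=\sum_\xi \mathcal{E}_\xi$, invoke Theorem~\ref{thm:xidecayrate_zero} for $\xi=0$, observe that for $\xi\in\Z^{N-1}\setminus\{0\}$ and $r\ge\half$ the factor $|\xi|^2\mu(\xi)/(1+|\xi|)^3$ in Theorem~\ref{thm:xidecayrate} is uniformly bounded below, and sum. The paper states this uniform lower bound implicitly as \eqref{eq:rbig}; your version spells out the elementary estimate and the finiteness of $\mathcal{E}(0)$, but the argument is the same.
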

\begin{proof}
First recall that since $\Sigma = \T^{N-1}$ we have that $\hat{\Sigma} = \Z^{N-1}$. For $\xi\ne 0$ the estimate \eqref{eq:decayrate} of Theorem \ref{thm:xidecayrate} tells us that 
\begin{equation}\label{eq:rbig}
\mathcal E_\xi(t)\le b\mathcal E_\xi(0)\exp\left(-ct\right),
\end{equation}
while for $\xi =0$ the bound  \eqref{eq:zeromodedecayrate} of Theorem \ref{thm:xidecayrate_zero} states that
\begin{equation}\label{eq:zerofreqdecay}
\mathcal{E}_0(t)\le \mathcal{E}_0(0)\exp\left(-\delta t\right).
\end{equation}

Recall that 
\begin{equation}
\mathcal{E}(t) = \sum_{\xi \in {\hat\Sigma}}\mathcal E_\xi(t). 
\end{equation}
This, \eqref{eq:rbig}, and \eqref{eq:zerofreqdecay} then imply that
\begin{equation}
\mathcal E(t)=\mathcal E_0(t)+\sum_{\xi\ne 0} \mathcal E_\xi(t)\le \mathcal E_0(0)e^{-\delta t} +b\sum_{\xi\ne 0}e^{-c t}\mathcal E_\xi(0)\lesim (b+1)\mathcal E(0)e^{-\min\{c,\delta\}t},
\end{equation}
which is \eqref{eq:torusdecayexponential_est}.
\end{proof}

\begin{remark}
By Theorem \ref{thm:torusdecayexponential}, we get exponential decay of the energy.  In fact, when $r>\half$, higher frequencies decay faster, so for $t>0$, we get that $u(x,y,t),h(x,y,t)$ are infinitely differentiable with respect to the horizontal variable $x$.
\end{remark}

We next turn our attention to the case in which $r < 1/2$ in \eqref{eq:mu_assump}.  In this case we do not get exponential decay, but instead algebraic decay directly linked to the regularity of the initial data.

\begin{thm}\label{thm:torusdecayalgebraic}
Suppose that $(u,h)$ is a weak solution of $\eqref{eq:fullweak}$ on $\Sigma=\T^{N-1}$ and that $\mu$ is of the form \eqref{eq:mu_assump} with $0 \le r <  \half$.  Then there exists a constant $K>0$ such that if $u_0 \in H^s_x(L^2_y)$ and $h_0\in H^{s+r}$ for $s > 0$, then 
\begin{equation}\label{eq:torusdecayalgebraic_est}
\mathcal E(t)\le K \left(||u_0||_{H^s_x(L^2_y)}^2+||h_0||_{H^{s+r}}^2\right) (1+t)^{\frac{-s}{\half-r}}. 
\end{equation}
\end{thm}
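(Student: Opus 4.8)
The plan is to deduce Theorem~\ref{thm:torusdecayalgebraic} from the per-frequency bound \eqref{eq:decayrate} of Theorem~\ref{thm:xidecayrate}, the zero-mode bound \eqref{eq:zeromodedecayrate} of Theorem~\ref{thm:xidecayrate_zero}, and a Fourier synthesis in which regularity of the data is traded for temporal decay via the elementary inequality $e^{-x}\le(\lambda/e)^\lambda x^{-\lambda}$ valid for $x,\lambda>0$. First I would specialize \eqref{eq:decayrate} to $\mu$ of the form \eqref{eq:mu_assump}. For $\xi\in\Z^{N-1}\backslash\{0\}$ we have $|\xi|\ge 1$, hence $\mu(\xi)\ge\sigma(2\pi)^{2r}|\xi|^{2r}$ and $(1+|\xi|)^3\le 8|\xi|^3$, so \eqref{eq:decayrate} becomes $\mathcal E_\xi(t)\le b\,\mathcal E_\xi(0)\exp(-c_\ast|\xi|^{2r-1}t)$ for a constant $c_\ast=c_\ast(g,\sigma,r)>0$; the key structural point is that $2r-1<0$, so the decay rate degenerates as $|\xi|\to\infty$, which is exactly what forces the loss of exponential decay.

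Next I would fix the exponent $\lambda=s/(\tfrac12-r)>0$ (this is where $s>0$ is used) and apply $e^{-x}\le(\lambda/e)^\lambda x^{-\lambda}$ with $x=c_\ast|\xi|^{2r-1}t$ for $t>0$, obtaining $\exp(-c_\ast|\xi|^{2r-1}t)\le C\,|\xi|^{\lambda(1-2r)}t^{-\lambda}=C\,|\xi|^{2s}\,t^{-s/(\tfrac12-r)}$, where the choice of $\lambda$ is precisely what makes $\lambda(1-2r)=2s$. Thus for $\xi\ne 0$ we get $\mathcal E_\xi(t)\lesssim(1+|\xi|^2)^s\,\mathcal E_\xi(0)\,t^{-s/(\tfrac12-r)}$. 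I would then relate the weighted sum of the $\mathcal E_\xi(0)$ to the stated norms: since $\mu(\xi)\le g+\sigma(2\pi)^{2r}(1+|\xi|^2)^r\lesssim(1+|\xi|^2)^r$, the definition \eqref{term:xienergy} gives $(1+|\xi|^2)^s\mathcal E_\xi(0)\lesssim(1+|\xi|^2)^s\|\hat u(\xi,\cdot,0)\|_{L^2_y}^2+(1+|\xi|^2)^{s+r}|\hat h(\xi,0)|^2$, and summing over $\xi$ yields $\sum_\xi(1+|\xi|^2)^s\mathcal E_\xi(0)\lesssim\|u_0\|_{H^s_x(L^2_y)}^2+\|h_0\|_{H^{s+r}}^2=:I_0$. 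Summing the displayed bound over $\xi\ne 0$ then gives $\sum_{\xi\ne 0}\mathcal E_\xi(t)\lesssim I_0\,t^{-s/(\tfrac12-r)}$.

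It remains to incorporate the zero mode and the small-time regime. For $\xi=0$, Theorem~\ref{thm:xidecayrate_zero} gives $\mathcal E_0(t)\le\mathcal E_0(0)e^{-\delta t}\lesssim I_0(1+t)^{-s/(\tfrac12-r)}$, since any exponential decays faster than any fixed polynomial rate. Adding this to the previous bound gives $\mathcal E(t)=\sum_\xi\mathcal E_\xi(t)\lesssim I_0(1+t)^{-s/(\tfrac12-r)}$ for $t\ge 1$. For $0\le t\le 1$ I would instead use that $\mathcal E$ is non-increasing in $t$ (from $\tfrac{d}{dt}\mathcal E+\mathcal D=0$ with $\mathcal D\ge 0$), so $\mathcal E(t)\le\mathcal E(0)=\sum_\xi\mathcal E_\xi(0)\lesssim\|u_0\|_{L^2(\Omega)}^2+\|h_0\|_{H^r(\Sigma)}^2\le I_0$, and since $(1+t)^{-s/(\tfrac12-r)}$ is bounded below by $2^{-s/(\tfrac12-r)}$ on $[0,1]$, this again yields $\mathcal E(t)\lesssim I_0(1+t)^{-s/(\tfrac12-r)}$. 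Combining the two regimes gives \eqref{eq:torusdecayalgebraic_est}.

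I do not expect a genuine analytic obstacle here: the argument is a routine Fourier synthesis once Theorems~\ref{thm:xidecayrate} and~\ref{thm:xidecayrate_zero} are in hand. The only points requiring care are the bookkeeping of exponents—verifying that $\lambda=s/(\tfrac12-r)$ is exactly the choice that lets the factor $|\xi|^{\lambda(1-2r)}$ be absorbed into the $H^s$ weight—and the matching of the $\mu(\xi)$ factor in $\mathcal E_\xi$ with the $H^{s+r}$ norm of $h_0$; the small-time bound is immediate from monotonicity of the energy.
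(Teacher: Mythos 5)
Your proposal is correct and follows essentially the same route as the paper: both decompose $\mathcal E$ into the zero mode (handled by Theorem~\ref{thm:xidecayrate_zero}) plus the nonzero modes, and both trade the $H^s$ regularity of the data for temporal decay by bounding the factor $|\xi|^{-2s}\exp(-c|\xi|^{2r-1}t)$ uniformly in $\xi$. The only cosmetic differences are that you apply the elementary inequality $e^{-x}\le(\lambda/e)^\lambda x^{-\lambda}$ termwise and handle $0\le t\le 1$ via monotonicity of $\mathcal E$, whereas the paper first replaces $t$ by $t+1$ inside the exponential and then optimizes via the substitution $a=|\xi|^{2r-1}(t+1)$ — the underlying computation is identical.
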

\begin{proof}
Theorems \ref{thm:xidecayrate} and \ref{thm:xidecayrate_zero} provide us with the estimate
\begin{equation}
\begin{split}
\mathcal E(t)&=\mathcal E_0(t)+\sum_{\xi\ne 0} \mathcal E_\xi(t)\\
&\le \mathcal E_0(0)e^{-\delta t} +b\sum_{\xi\ne 0} \left(|\xi|^{2s}\mathcal E_\xi(0)\right)|\xi|^{-2s}\exp\left(-c|\xi|^{2r-1}t\right)\\
&\le \mathcal E_0(0)e^{-\delta t} +b\exp(cc_0^{2r-1})\sum_{\xi\ne 0} \left(|\xi|^{2s}\mathcal E_\xi(0)\right)|\xi|^{-2s}\exp\left(-c|\xi|^{2r-1}(t+1)\right)\\
&\le \mathcal E_0(0)e^{-\delta t} +C\left(\frac{g}{2} ||u_0||_{H^s_x(L^2_y)}^2+\sigma(2\pi)^{2r} ||h_0||_{H^{s+r}}^2\right)\sup_{\xi\ne 0}|\xi|^{-2s}\exp\left(-c|\xi|^{2r-1}(t+1)\right).
\end{split}
\end{equation}
The substitution $a=|\xi|^{2r-1}(t+1)$ then gives us
\begin{equation}
\begin{split}
\mathcal E(t) &\le \mathcal E_0(0)e^{-\delta t} +\tilde C\left(||u_0||_{H^s_x(L^2_y)}^2+||h_0||_{H^{s+r}}^2\right)\sup_{a>0}a^{\frac{2s}{1-2r}}(t+1)^{\frac{-2s}{1-2r}}\exp\left(-ca\right)\\
&\le \mathcal E(0)e^{-\delta t}+\breve C\left(||u_0||_{H^s_x(L^2_y)}^2+||h_0||_{H^{s+r}}^2\right)(t+1)^{\frac{-s}{\half-r}}
\end{split}
\end{equation}
for some constant $\tilde{C}>0$. Then \eqref{eq:torusdecayalgebraic_est} follows immediately from this.
\end{proof}

\subsection{Understanding the transition near $r=1/2$  }

Theorems \ref{thm:torusdecayexponential} and \ref{thm:torusdecayalgebraic} show that there is a transition in the decay behavior of the energy when $r=\half$ in \eqref{eq:mu_assump}, that is when $\mu(\xi)=g+2\pi\sigma |\xi|$.  We now examine this transition more carefully.

\begin{thm}\label{thm:transitiondecay}
Suppose that $(u,h)$ is a weak solution of $\eqref{eq:fullweak}$ on $\Sigma=\T^{N-1}$ and that $u_0\in H^s, h_0\in H^{s+3}$ for some $s>0$.  Then the following hold.
\begin{enumerate}
 \item If there exists $\alpha >0$ such that  
\begin{equation}
\mu(\xi)=g + 2\pi \sigma \frac{|\xi|}{(\log|\xi|)^\alpha} \text{ for } \abs{\xi} > 1,
\end{equation}
then we have that
\begin{equation}\label{eq:transitiondecay_log}
\mathcal E(t)\le K \left( ||u||_{H^s}^2+||h||_{H^{s+3}}^2 \right) \exp\left(-\breve C_st^{\frac{1}{1+\alpha}}\right),
\end{equation}
where the constants $\breve C_s,K$ depend on $s,\alpha$.

 \item If there exists $\alpha >0$ such that  
\begin{equation}
 \mu(\xi)=g + 2\pi \sigma \frac{|\xi|}{(\log\log|\xi|)^\alpha}  \text{ for } \abs{\xi} > e,
\end{equation}
then we have that
\begin{equation}\label{eq:transitiondecay_loglog}
\mathcal E(t)\le K \left(||u||_{H^s}^2+||h||_{H^{s+3}}^2 \right) \exp\left(-\breve C_s\frac{t}{(\log t)^{\alpha}}\right)
\end{equation}
where the constants $\breve C_s,K$ depend on $s,\alpha$. 
\end{enumerate}

\end{thm}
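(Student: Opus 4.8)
The plan is to deduce both estimates from the single-frequency decay of Theorem~\ref{thm:xidecayrate}, together with the zero-mode decay of Theorem~\ref{thm:xidecayrate_zero}, by a Fourier synthesis that trades regularity of the data against the decay rate, and then to close with a short one-variable optimization. The first step is to translate the rate $|\xi|^2\mu(\xi)(1+|\xi|)^{-3}$ appearing in \eqref{eq:decayrate} into the form dictated by $\mu$. In case~(1), for $|\xi|>1$ we have $\mu(\xi)\ge 2\pi\sigma|\xi|(\log|\xi|)^{-\alpha}$ and $(1+|\xi|)^3\le 8|\xi|^3$, so the rate is at least $c_0(\log|\xi|)^{-\alpha}$ for a constant $c_0=c_0(g,\sigma,\alpha)>0$; in case~(2) the identical computation gives rate at least $c_0(\log\log|\xi|)^{-\alpha}$ for $|\xi|>e$. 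The finitely many remaining nonzero modes ($|\xi|\le 1$ in case~(1), $|\xi|\le e$ in case~(2)) have rate bounded below by a fixed positive constant since $\mu\ge\theta$, so, together with the zero mode handled by Theorem~\ref{thm:xidecayrate_zero}, their total contribution $\mathcal{E}_{\mathrm{low}}(t)$ satisfies $\mathcal{E}_{\mathrm{low}}(t)\le C\,\mathcal{E}(0)\,e^{-\delta t}$; since the exponents in the claimed rates grow sublinearly in $t$, this term decays faster than, hence is absorbed by, the right-hand sides of \eqref{eq:transitiondecay_log} and \eqref{eq:transitiondecay_loglog}, using $\mathcal{E}(0)\lesssim\|u_0\|_{H^s}^2+\|h_0\|_{H^{s+3}}^2$.

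For the high-frequency contribution I would write $\mathcal{E}_\xi(0)=(1+|\xi|)^{-2s}A_\xi$ with $A_\xi:=(1+|\xi|)^{2s}\mathcal{E}_\xi(0)$. Using the standing bound $\mu(\xi)\le\theta^{-1}(1+|\xi|)^3$ in \eqref{term:xienergy} together with Parseval, one has $\sum_\xi A_\xi\lesssim\|u_0\|_{H^s_x(L^2_y)}^2+\|h_0\|_{H^{s+3/2}}^2\lesssim\|u_0\|_{H^s}^2+\|h_0\|_{H^{s+3}}^2$, which is finite by hypothesis (only $\|u_0\|_{H^s_x(L^2_y)}$ and $\|h_0\|_{H^{s+3/2}}$ are actually needed). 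Applying Theorem~\ref{thm:xidecayrate} termwise and pulling $\sum_\xi A_\xi$ outside the frequency sum gives, in case~(1),
\[
\sum_{|\xi|>1}\mathcal{E}_\xi(t)\le b\Big(\sum_\xi A_\xi\Big)\sup_{|\xi|>1}(1+|\xi|)^{-2s}\exp\Big(-\frac{c_0 t}{(\log|\xi|)^\alpha}\Big),
\]
and the analogous bound with $(\log\log|\xi|)^\alpha$ in place of $(\log|\xi|)^\alpha$ in case~(2).

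The heart of the matter is the optimization of these suprema, for which an elementary dichotomy suffices. In case~(1), set $x=\log|\xi|>0$ and use $(1+|\xi|)^{-2s}\le e^{-2sx}$, so it is enough to bound $\exp(-2sx-c_0t/x^\alpha)$: either $x\ge t^{1/(1+\alpha)}$, whence $2sx\ge 2s\,t^{1/(1+\alpha)}$, or $x<t^{1/(1+\alpha)}$, whence $c_0t/x^\alpha>c_0\,t^{1/(1+\alpha)}$; in both cases $2sx+c_0t/x^\alpha\ge\breve C_s\,t^{1/(1+\alpha)}$ with $\breve C_s:=\min\{2s,c_0\}$, which yields \eqref{eq:transitiondecay_log}. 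In case~(2), set $L=\log|\xi|>1$, note $(\log\log|\xi|)^\alpha=(\log L)^\alpha$, and bound $\exp(-2sL-c_0t/(\log L)^\alpha)$: if $L\ge t$ then $2sL\ge 2st\ge 2s\,t/(\log t)^\alpha$ (using $(\log t)^\alpha\ge 1$ for $t\ge e$), while if $1<L<t$ then $(\log L)^\alpha<(\log t)^\alpha$, hence $c_0t/(\log L)^\alpha>c_0\,t/(\log t)^\alpha$; either way the exponent is at least $\breve C_s\,t/(\log t)^\alpha$, which yields \eqref{eq:transitiondecay_loglog} for $t\ge e$, the bounded range $0\le t\le e$ being absorbed into $K$ via $\mathcal{E}(t)\le\mathcal{E}(0)$. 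Recombining with $\mathcal{E}_{\mathrm{low}}(t)$ completes both parts.

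I do not expect any deep obstacle here: the argument is a routine Fourier synthesis combined with the dichotomy above. The only point needing care is the bookkeeping in case~(2), namely keeping $\log\log|\xi|$ away from its singularity at $|\xi|=e$ (which is why the hypothesis on $\mu$ is imposed only for $|\xi|>e$ and the small modes are peeled off into $\mathcal{E}_{\mathrm{low}}$), and restricting attention to $t$ bounded away from $0$ so that $(\log t)^\alpha$ is meaningful.
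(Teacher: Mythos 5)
Your proposal is correct and follows essentially the same route as the paper: split off the zero mode and the finitely many low modes (which decay exponentially and are absorbed), apply Theorem~\ref{thm:xidecayrate} to the remaining modes with the rate $|\xi|^2\mu(\xi)(1+|\xi|)^{-3}\gtrsim(\log|\xi|)^{-\alpha}$ (resp.\ $(\log\log|\xi|)^{-\alpha}$), and reduce to optimizing $\sup_\xi(1+|\xi|)^{-2s}\exp(-c_0t/(\log|\xi|)^\alpha)$ after paying $(1+|\xi|)^{2s}$ against the regularity of the data. The only difference is in that final optimization, where the paper introduces the inverse function $\beta$ and locates the crossing point $a_*$ of the two monotone terms, while your two-case dichotomy reaches the same lower bound on the exponent more directly (and your observation that $h_0\in H^{s+3/2}$ suffices is consistent with, and slightly sharper than, the paper's bookkeeping).
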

\begin{proof}
Throughout the proof we will write $\gamma : (e,\infty) \to (0,\infty)$ to refer to either 
\begin{equation}
 \gamma(z) = \frac{z}{(\log z)^\alpha} \text{ or } \gamma(z) =  \frac{z}{(\log \log z)^\alpha},
\end{equation}
which allows us to write 
\begin{equation}
 \mu(\xi) = g + 2\pi \sigma \gamma(\abs{\xi}) \text{ for } \abs{\xi} >e
\end{equation}
in both cases under consideration.  Note that in both cases $\gamma$ is such that $(e,\infty) \ni z \mapsto z/\gamma(z)$ is continuous and strictly increasing, and
\begin{equation}
\lim_{z\to\infty} \frac{z}{\gamma(z)}=\infty. 
\end{equation}
We also write $\beta :(0,\infty) \to (0,\infty)$ via either
\begin{equation}
 \beta(z) = \exp\left(z^{1/\alpha}\right) \text{ or }  \beta(z) = \exp\left(\exp\left(z^{1/\alpha}\right)\right)
\end{equation}
depending on the form of $\gamma$ above.  The value of $\beta$ is chosen so that $\beta(z/\gamma(z)) = z$, as is easily verified.

Theorems \ref{thm:xidecayrate} and \ref{thm:xidecayrate_zero} provide us with the estimates
\begin{equation}
\mathcal{E}_\xi(t)\le c_1\mathcal{E}_\xi(0)\exp\left(-\frac{c_2\mu(\xi)}{2\pi \sigma |\xi|}t\right) \text{ for }\xi \neq 0  \text{ and } \mathcal{E}_0(t) \le c_3 \mathcal{E}_0(0) e^{-c_4 t}
\end{equation}
for constants $c_1,c_2,c_3,c_4>0$.  Consequently, we can bound 
\begin{equation}
 \sum_{\abs{\xi} \le e} \mathcal{E}_\xi(t) \le c_3 e^{-c_5 t} \sum_{\abs{\xi} \le e} \mathcal{E}_\xi(0)
\end{equation}
for another constant $c_5 >0$.  Similarly,  we can bound
\begin{equation}
 \sum_{\abs{\xi} > e} \mathcal{E}_\xi(t) \le c_1 \exp\left(-\frac{c_2\gamma(\abs{\xi})}{ |\xi|}t\right)  \sum_{\abs{\xi} > e} \mathcal{E}_\xi(0).
\end{equation}
Thus
\begin{equation}
\begin{split}
\mathcal E(t)&=\sum_{|\xi|\le e}\mathcal E_\xi(t)+\sum_{|\xi|>e}\mathcal E_\xi(t)\\
&\lesim e^{-c_5 t}\sum_{|\xi|\le e}\mathcal E_\xi(0)+\sum_{|\xi|>e}\left(|\xi|^{2s}||\hat u(\xi)||_{L^2_y}^2+|\xi|^{2s+6}||\hat h(\xi)||^2\right)|\xi|^{-2s}\exp\left(-\frac{c_2\gamma(|\xi|)}{|\xi|}t\right)\\
&\lesim e^{-c_5 t}\sum_{|\xi|\le e}\mathcal E_\xi(0)+\left(||u||_{H^s}^2+||h||_{H^{s+3}}^2\right)\sup_{|\xi|>e}|\xi|^{-2s}\exp\left(-\frac{c_2\gamma(|\xi|)}{|\xi|}t\right).
\end{split}
\end{equation}

Next we use the substitution $a=\frac{\gamma(|\xi|)}{|\xi|}t >0$ to calculate
\begin{equation}
\begin{split}
\sup_{|\xi|>e}|\xi|^{-2s}\exp\left(-\frac{c_2\gamma(|\xi|)}{|\xi|}t\right)
&\le\sup_{a>0} (\beta(t/a))^{-2s}\exp(-c_2a)\\
&\le\sup_{a>0} \exp(-2s\log \beta(t/a)-c_2a)\\
&\le\exp(-\inf_{a>0}(2s\log \beta(t/a)+c_2a)).
\end{split}
\end{equation}
Thus, if we write 
\begin{equation}
 B = ||u||_{H^s}^2+||h||_{H^{s+3}}^2,
\end{equation}
then we have the bound
\begin{equation}\label{ineq:mina}
\mathcal E(t)\ls Be^{-c_5 t}+B\exp(-\inf_{a>0}(2s\log \beta(t/a)+c_2a)).
\end{equation}

For \eqref{ineq:mina} to be useful  we want a lower bound on $\inf_{a>0}(2s\log \beta(t/a)+c_2a)$. Let $a_*$ be such that $2s\log \beta(t/a_*)=c_2a_*$.  Since $2s\log \beta(t/a)$ is decreasing and $c_2a$ is increasing in $a$, we have that for all $a$,
\begin{equation}
\inf_{a>0}(2s\log \beta(t/a)+c_2a)\ge c_2a_*.
\end{equation}
Thus
\begin{equation}\label{bound:adecay}
\mathcal E(t)\le Be^{-c_5 t}+B\exp(-c_2a_*).
\end{equation}

We now handle the separate cases. For the first case 
\begin{equation}
\gamma(z)=\frac{z}{(\log z)^\alpha} \text{ and } \beta(z)=\exp(z^{1/\alpha}),
\end{equation}
and so
\begin{equation}
\begin{split}
&c_2a_* =2s\log \beta(t/a_*) \Leftrightarrow \frac{c_2}{2s}a_* =\left(\frac{t}{a_*}\right)^{\frac{1}{\alpha}}\\
&\Leftrightarrow C_sa_*^{\alpha} =\frac{t}{a_*}
\Leftrightarrow C_sa_*^{1+\alpha} =t 
 \Leftrightarrow a_* =\tilde C_st^{\frac{1}{1+\alpha}}
\end{split}
\end{equation}
where $C_s,\tilde C_s$ are constants depending on $s$. Then $\eqref{ineq:mina}$ gives us
\begin{equation}
\mathcal E(t)\ls B e^{-c_5 t}+B \exp(-c_2C_st^{\frac{1}{1+\alpha}})\le K B \exp\left(-\breve C_st^{\frac{1}{1+\alpha}}\right)
\end{equation}
for some constant $K>0$.  This proves \eqref{eq:transitiondecay_log}.

For the second case
\begin{equation}
\gamma(z)=\frac{z}{(\log \log z)^\alpha} \text{ and } \beta(z) = \exp \exp(z^{1/\alpha}), 
\end{equation}
and so
\begin{equation}
\begin{split}
&c_2a_*=2s\log \beta(t/a_*) \Leftrightarrow \frac{c_2}{2s}a_* =\exp\left(\left(\frac{t}{a_*}\right)^{\frac{1}{\alpha}}\right)\\
&\Leftrightarrow \log\left(C_sa_*\right)^{\alpha} =\frac{t}{a_*} \Leftrightarrow 
a_*\log\left(C_sa_*\right)^{\alpha} =t \Leftrightarrow
a_* \ge\tilde C_s\frac{t}{(\log t)^{\alpha}} 
\end{split}
\end{equation}
where $C_s,\tilde C_s$ are constants depending on $s$. Then $\eqref{ineq:mina}$ gives us
\begin{equation}
\mathcal E(t)\ls Be^{-c_4 t}+ B\exp\left(-c_2\tilde C_s\frac{t}{\log(t)^{\alpha}}\right)\le KB \exp\left(-\breve C_s\frac{t}{\log(t)^{\alpha}}\right)
\end{equation}
for a constant $K >0$.  This proves \eqref{eq:transitiondecay_loglog}.

\end{proof}

\section{The non-periodic problem }

In this section we focus our attention on the decay properties of $\mathcal{E}$ when $\Sigma = \R^{N-1}$.  We will again assume that $\mu$ has the special structure given in \eqref{eq:mu_assump}.  The decay properties will depend heavily on the sort of assumptions we place on the initial data.  The reason for this is that the high and low frequencies decay at very different rates, with the low frequencies decaying more slowly.  The low frequencies may be handled with different arguments depending on which spaces the data belong to.

\subsection{Decay with data in $L^2$-based spaces }

We begin our analysis when $\Sigma = \R^{N-1}$ by considering the case in which the initial data are assumed to belong to $L^2-$based spaces.  In this context we will follow \cite{GuoTice2013a2} and assume that  $||I_\lambda u||_{L^2(\Omega)}^2,||I_\lambda h||_{L^2(\Omega)}^2<\infty$, where
\begin{equation}
||I_\lambda u||_{L^2(\Omega)}^2:=\int_{\R^{N-1}\wo\{0\}} |\xi|^{-2\lambda}||\hat u(\xi)||_{L^2_y}^2 d\xi 
\end{equation}
and 
\begin{equation}
||I_\lambda h||_{L^2(\Sigma)}^2:=\int_{\R^{N-1}\wo\{0\}} |\xi|^{-2\lambda}|\hat h(\xi)|^2 d\xi .
\end{equation}
Control of these terms allows us to prove crucial estimates for the low frequency part of the solutions.

\begin{thm}\label{thm:nonper_L2_decay}
Suppose that $(u,h)$ is a weak solution of $\eqref{eq:fullweak}$ on $\Sigma=\R^{N-1}$ and that $\mu$ is of the form \eqref{eq:mu_assump} with $0 \le r \le 1$.  Assume that the initial data $(u_0,h_0$ satisfy  $||I_\lambda u_0||_{L^2(\Omega)},||I_\lambda h_0||_{L^2(\Sigma)}<\infty$, where these terms are as defined above.  Further assume that if $r\ge \half$, then  $u_0\in L^2$ and $h_0\in H^r$, and if $0\le r<\half$, then $u_0\in H^{\lambda(\half-r)}_x(L^2_y)$ and $h_0\in H^{\frac{\lambda}{2}+r(1-\lambda)}$.   Then
\begin{equation}\label{eq:nonper_L2_decay_est}
\mathcal E(t)\le K(1+t)^{-\lambda}
\end{equation}
for some $K$ which depends on the initial data.
\end{thm}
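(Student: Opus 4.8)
\emph{Proof proposal.} The plan is to decompose the full energy over frequencies via $\mathcal{E}(t) = \int_{\R^{N-1}} \mathcal{E}_\xi(t)\, d\xi$, to bound each $\mathcal{E}_\xi(t)$ using Theorem~\ref{thm:xidecayrate} (which applies for a.e.\ $\xi$, and in $\R^{N-1}$ the single mode $\xi=0$ is a null set), and then to split the frequency integral into the low-frequency piece $\{|\xi|\le 1\}$ and the high-frequency piece $\{|\xi|\ge 1\}$, estimating each with a weighted supremum argument. Since $\mu$ has the form \eqref{eq:mu_assump}, the multiplier appearing in the exponent of \eqref{eq:decayrate} satisfies $\frac{|\xi|^2\mu(\xi)}{(1+|\xi|)^3}\asymp |\xi|^2$ for $|\xi|\le 1$ and $\asymp |\xi|^{2r-1}$ for $|\xi|\ge 1$, and moreover $\mu(\xi)\asymp 1$ on $\{|\xi|\le 1\}$ while $\mu(\xi)\asymp |\xi|^{2r}$ on $\{|\xi|\ge 1\}$; thus Theorem~\ref{thm:xidecayrate} gives $\mathcal{E}_\xi(t)\ls \mathcal{E}_\xi(0)\exp(-c|\xi|^2 t)$ on the low region and $\mathcal{E}_\xi(t)\ls \mathcal{E}_\xi(0)\exp(-c|\xi|^{2r-1} t)$ on the high region.

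For the low-frequency piece I would insert the weight $1=|\xi|^{-2\lambda}\cdot|\xi|^{2\lambda}$ and use the elementary bound $|\xi|^{2\lambda}\exp(-c|\xi|^2 t)\le \min\{1,\, C_\lambda t^{-\lambda}\}\ls (1+t)^{-\lambda}$ for $|\xi|\le 1$ (the first estimate because $|\xi|\le 1$, the second by substituting $a=c|\xi|^2 t$ and maximizing $a^\lambda e^{-a}$). Pulling this supremum out and using $\mu(\xi)\asymp 1$ there gives $\int_{|\xi|\le 1}\mathcal{E}_\xi(t)\,d\xi\ls (1+t)^{-\lambda}\int_{|\xi|\le 1}|\xi|^{-2\lambda}\big(||\hat u_0(\xi)||_{L^2_y}^2+|\hat h_0(\xi)|^2\big)\,d\xi\ls (1+t)^{-\lambda}\big(||I_\lambda u_0||_{L^2(\Omega)}^2+||I_\lambda h_0||_{L^2(\Sigma)}^2\big)$, which is exactly what the $I_\lambda$ hypotheses provide.

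For the high-frequency piece I would split into the two regimes of $r$. When $r\ge\half$ one has $|\xi|^{2r-1}\ge 1$ on $\{|\xi|\ge 1\}$, so $\mathcal{E}_\xi(t)\ls\mathcal{E}_\xi(0)e^{-ct}$, and since $\mu(\xi)\ls|\xi|^{2r}$ there, $\int_{|\xi|\ge 1}\mathcal{E}_\xi(t)\,d\xi\ls e^{-ct}\big(||u_0||_{L^2(\Omega)}^2+||h_0||_{H^r}^2\big)\ls (1+t)^{-\lambda}\big(||u_0||_{L^2(\Omega)}^2+||h_0||_{H^r}^2\big)$. When $0\le r<\half$ I would instead set $\theta=\lambda(\half-r)>0$, write $\mathcal{E}_\xi(0)e^{-c|\xi|^{2r-1}t}=\big(|\xi|^{2\theta}\mathcal{E}_\xi(0)\big)\big(|\xi|^{-2\theta}e^{-c|\xi|^{2r-1}t}\big)$, and use the substitution $a=c|\xi|^{2r-1}t$ to get $|\xi|^{-2\theta}e^{-c|\xi|^{2r-1}t}\ls t^{-2\theta/(1-2r)}=t^{-\lambda}$; combined with the trivial bound $|\xi|^{-2\theta}e^{-c|\xi|^{2r-1}t}\le 1$ on $\{|\xi|\ge 1\}$ (valid since $\theta>0$), this is $\ls (1+t)^{-\lambda}$. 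Using $\mu(\xi)\asymp|\xi|^{2r}$ on $\{|\xi|\ge 1\}$ and noting $\theta=\lambda(\half-r)$ and $\theta+r=\frac{\lambda}{2}+r(1-\lambda)$, the weighted initial energy $\int_{|\xi|\ge 1}|\xi|^{2\theta}\mathcal{E}_\xi(0)\,d\xi$ is controlled by $||u_0||_{H^{\lambda(\half-r)}_x(L^2_y)}^2+||h_0||_{H^{\frac{\lambda}{2}+r(1-\lambda)}}^2$, precisely the hypothesized norms. Adding the low- and high-frequency bounds then yields \eqref{eq:nonper_L2_decay_est} with $K$ a fixed multiple of the sum of the data norms.

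The computations involved are all routine; the one point requiring care is the choice of the weight exponent $\theta$ in the high-frequency region when $r<\half$. It must be large enough that $\int |\xi|^{2\theta}\mathcal{E}_\xi(0)$ is still bounded by norms of $(u_0,h_0)$ at the regularity levels assumed, yet small enough that $\sup_{|\xi|\ge 1}|\xi|^{-2\theta}\exp(-c|\xi|^{2r-1}t)$ decays as fast as $t^{-\lambda}$; the identity $2\theta/(1-2r)=\lambda$ pins down $\theta=\lambda(\half-r)$, and it is exactly the restriction $r<\half$ that makes $\theta$ positive (so the trivial bound on $\{|\xi|\ge 1\}$ is available to absorb small $t$) and reconciles the two requirements — the borderline $r=\half$ is where this fails and exponential decay of the high modes must be used in its place.
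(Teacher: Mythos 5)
Your proposal is correct and follows essentially the same route as the paper: the low-frequency piece is handled exactly as in the paper's proof (inserting the weight $|\xi|^{-2\lambda}|\xi|^{2\lambda}$ and optimizing $a^\lambda e^{-ca}$ to trade the $I_\lambda$ norms for $(1+t)^{-\lambda}$), and the high-frequency piece reproduces, with sums replaced by integrals, the arguments of Theorems \ref{thm:torusdecayexponential} and \ref{thm:torusdecayalgebraic} that the paper invokes by reference. Your exponent bookkeeping ($\theta=\lambda(\tfrac12-r)$, $2\theta/(1-2r)=\lambda$, $\theta+r=\tfrac{\lambda}{2}+r(1-\lambda)$) matches the hypothesized Sobolev indices, so nothing is missing.
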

\begin{proof}
Fix any $c_0>0$, which we use as the cutoff between low and high frequencies.  Then we can write $\mathcal E(t)=\mathcal E_{<c_0}(t)+\mathcal E_{>c_0}(t)$ for
\begin{equation}
\mathcal E_{<c_0}(t) =\int_{0<|\xi|<c_0}\mathcal E_\xi(t) d\xi \text{ and }
\mathcal E_{>c_0}(t) =\int_{|\xi|>c_0}\mathcal E_\xi(t) d\xi.
\end{equation}
We already know how $\mathcal E_{>c_0}(t)$ decays.  Indeed, the Sobolev spaces chosen in the hypotheses allow us to employ the same arguments used in Theorems \ref{thm:torusdecayexponential} (for $r \ge 1/2$) and \ref{thm:torusdecayalgebraic} (for $0 \le r < 1/2$), with the sums replaced by integrals, to see that 
\begin{equation}\label{eq:nonper_L2_decay_1}
\mathcal E_{>c_0}(t)\le K(1+t)^{-\lambda}
\end{equation}
for a $K$ depending on the data.  

To understand the behavior of $\mathcal E_{<c_0}(t)$, we use the estimate \eqref{eq:decayrate} of Theorem \ref{thm:xidecayrate} to see that
\begin{equation}
\mathcal E_\xi(t)\le b\mathcal E_\xi(0)\exp\left(-c|\xi|^2t\right) \text{ for } \abs{\xi} < c_0,
\end{equation}
which implies that 
\begin{equation}
\begin{split}
\mathcal E_{<c_0}(t)&\le \int_{\{|\xi|<c_0\}} b\mathcal E_\xi(0)\exp\left(-c|\xi|^2t\right) d\xi \le\int_{\{|\xi|<c_0\}} be^{cc_0^2}\mathcal E_\xi(0)\exp\left(-c|\xi|^2(t+1)\right) d\xi\\
&\le be^{cc_0^2}\left(\int_{\{|\xi|<c_0\}} \mathcal E_\xi(0) |\xi|^{-2\lambda}d\xi\right)\sup_{\xi} |\xi|^{2\lambda}\exp\left(-c|\xi|^2(t+1)\right).
\end{split}
\end{equation}
The substitution $a=|\xi|^2(t+1)$ then gives us
\begin{equation}\label{eq:nonper_L2_decay_2}
\begin{split}
\mathcal E_{<c_0}(t)&\le be^{cc_0^2}\left(||I_\lambda u_0||_{L^2(\Omega)}+(g+(2\pi c_0)^{2r})||I_\lambda h||_{L^2(\Sigma)}^2\right)\sup_{a>0} a^{\lambda}(t+1)^{-\lambda}\exp\left(-ca\right)\\
&\le\tilde K(1+t)^{-\lambda}
\end{split}
\end{equation}
for $\tilde{K}$ depending on the data.  Then \eqref{eq:nonper_L2_decay_est} follows by combining \eqref{eq:nonper_L2_decay_1} and \eqref{eq:nonper_L2_decay_2}.
\end{proof}

\subsection{Decay with data in $L^1$}

We can also assume that the initial data is $L^1$, as is done in \cite{BealeNishida1985}, in which case we get the following variant of Theorem \ref{thm:nonper_L2_decay}.

\begin{thm}\label{thm:nonper_L1_decay}
Suppose that $(u,h)$ is a weak solution of $\eqref{eq:fullweak}$ on $\Sigma=\R^{N-1}$ and that $\mu$ is of the form \eqref{eq:mu_assump} with $0 \le r \le 1$.  Assume that the initial data satisfy  $u_0\in L^2_y(L^1_x),h_0\in L^1$.  Further assume that if $1/2 \le r \le 1$, then $u_0\in L^2$ and $h_0\in H^r$, and if  $0\le r<\half$, then  $u_0\in H^{\frac{N-1}{2}\left(\half-r\right)}_x(L^2_y)$ and $h_0\in H^{\frac{N-1}{4}+\frac{N+1}{2}r}$.   Then
\begin{equation}\label{eq:nonper_L1_decay_est}
\mathcal E(t)\le K(1+t)^{-\frac{N-1}{2}}
\end{equation}
for some $K$ that depends on the initial data.
\end{thm}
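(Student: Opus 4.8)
The plan is to follow the proof of Theorem~\ref{thm:nonper_L2_decay} essentially line by line, the only new ingredient being that the $L^2$-weighted control of the low frequencies (handled there through the quantities $\|I_\lambda u_0\|_{L^2}$ and $\|I_\lambda h_0\|_{L^2}$) is now produced by an $L^1\to L^2$ ``heat-kernel'' estimate, as in \cite{BealeNishida1985}. Fix a cutoff $c_0>0$ and split $\mathcal{E}(t)=\mathcal{E}_{<c_0}(t)+\mathcal{E}_{>c_0}(t)$ into the low- and high-frequency pieces, exactly as in that proof. Note that in the non-periodic case the single mode $\xi=0$ carries no mass, so Theorem~\ref{thm:xidecayrate} alone governs the decay.

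For the low frequencies I would use Theorem~\ref{thm:xidecayrate}: since $\mu(\xi)\to g$ and $(1+|\xi|)^{3}\to1$ as $\xi\to0$, the exponent in \eqref{eq:decayrate} is comparable to $|\xi|^{2}$, so $\mathcal{E}_\xi(t)\le b\,\mathcal{E}_\xi(0)\exp(-c|\xi|^{2}t)$ for $0<|\xi|<c_0$. The purpose of the $L^1$ hypotheses is that $|\hat u_0(\xi,y)|\le\|u_0(\cdot,y)\|_{L^1_x}$ and $|\hat h_0(\xi)|\le\|h_0\|_{L^1}$ for every $\xi$; integrating the first bound in $y$ and using that $\mu$ is bounded on $\{|\xi|<c_0\}$, one obtains $\mathcal{E}_\xi(0)\lesssim\|u_0\|_{L^2_y(L^1_x)}^{2}+\|h_0\|_{L^1}^{2}$ uniformly over the low-frequency region. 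Inserting a harmless factor to replace $t$ by $1+t$ and enlarging the domain of integration to all of $\R^{N-1}$ then yields
\begin{equation}
\mathcal{E}_{<c_0}(t)\lesssim\left(\|u_0\|_{L^2_y(L^1_x)}^{2}+\|h_0\|_{L^1}^{2}\right)\int_{\R^{N-1}}e^{-c|\xi|^{2}(1+t)}\,d\xi\lesssim\left(\|u_0\|_{L^2_y(L^1_x)}^{2}+\|h_0\|_{L^1}^{2}\right)(1+t)^{-\frac{N-1}{2}},
\end{equation}
the last step being the standard Gaussian integral over $\R^{N-1}$.

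For the high frequencies nothing new is needed. Theorem~\ref{thm:xidecayrate} together with $\mu(\xi)\asymp|\xi|^{2r}$ for large $|\xi|$ gives $\mathcal{E}_\xi(t)\le b\,\mathcal{E}_\xi(0)\exp(-c|\xi|^{2r-1}t)$ on $\{|\xi|>c_0\}$. When $\tfrac12\le r\le1$ the rate $|\xi|^{2r-1}$ is bounded below by $c_0^{2r-1}>0$, so $\mathcal{E}_{>c_0}(t)\le b\,e^{-cc_0^{2r-1}t}\mathcal{E}(0)$ with $\mathcal{E}(0)\lesssim\|u_0\|_{L^2}^{2}+\|h_0\|_{H^{r}}^{2}<\infty$ by the supplementary hypothesis; this decays exponentially, hence faster than $(1+t)^{-(N-1)/2}$. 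When $0\le r<\tfrac12$ I would repeat the computation from the proof of Theorem~\ref{thm:torusdecayalgebraic}, with the sum over $\Z^{N-1}$ replaced by an integral over $\R^{N-1}$: write $\mathcal{E}_\xi(0)=|\xi|^{-2s}\bigl(|\xi|^{2s}\mathcal{E}_\xi(0)\bigr)$ with $s=\tfrac{N-1}{2}\bigl(\tfrac12-r\bigr)$, factor out $\sup_{|\xi|>c_0}|\xi|^{-2s}\exp\bigl(-c|\xi|^{2r-1}(1+t)\bigr)$, and bound $\int_{|\xi|>c_0}|\xi|^{2s}\mathcal{E}_\xi(0)\,d\xi\lesssim\|u_0\|_{H^{s}_x(L^2_y)}^{2}+\|h_0\|_{H^{s+r}}^{2}$, which the stated $L^2$-Sobolev assumptions on $(u_0,h_0)$ guarantee to be finite. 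The substitution $a=|\xi|^{2r-1}(1+t)$ then bounds the supremum by a constant times $(1+t)^{-s/(1/2-r)}=(1+t)^{-\frac{N-1}{2}}$. Adding the two pieces gives \eqref{eq:nonper_L1_decay_est}.

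The steps that demand care, rather than posing a genuine obstacle, are: (i) checking that the $L^1\to L^\infty_\xi$ bounds survive the $y$-integration, so that $\mathcal{E}_\xi(0)$ is uniformly controlled on the low-frequency ball; (ii) verifying that the Gaussian integral over $\R^{N-1}$ produces exactly the exponent $-(N-1)/2$; and (iii) the exponent bookkeeping in the regime $0\le r<\tfrac12$, i.e. that with $s=\tfrac{N-1}{2}(\tfrac12-r)$ the finiteness of $\int_{|\xi|>c_0}|\xi|^{2s}\mathcal{E}_\xi(0)\,d\xi$ is indeed furnished by the stated Sobolev hypotheses. Since the frequency-localized decay estimates (Theorems~\ref{thm:xidecayrate}, \ref{thm:torusdecayexponential}, \ref{thm:torusdecayalgebraic}) are already in hand and the high-frequency analysis is word-for-word the earlier one, no serious new difficulty arises; the only step requiring (still modest) work is the $L^1$-based low-frequency Gaussian estimate, which is the $\Sigma=\R^{N-1}$ incarnation of the Beale--Nishida mechanism.
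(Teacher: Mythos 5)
Your proposal is correct and follows essentially the same route as the paper's proof: split $\mathcal{E}$ into low- and high-frequency pieces at a cutoff $c_0$, use the Hausdorff--Young bound ($\|\hat{f}\|_{L^\infty}\le\|f\|_{L^1}$) to control $\mathcal{E}_\xi(0)$ uniformly on the low-frequency ball and integrate the Gaussian over $\R^{N-1}$ to produce $(1+t)^{-(N-1)/2}$, and then reuse the high-frequency argument from Theorem \ref{thm:nonper_L2_decay} (i.e.\ the integral versions of Theorems \ref{thm:torusdecayexponential} and \ref{thm:torusdecayalgebraic}) to dispose of $\mathcal{E}_{>c_0}$. You have merely spelled out a few steps the paper leaves implicit, such as the $y$-integration of the $L^1_x\to L^\infty_\xi$ bound and the explicit split between the regimes $1/2\le r\le 1$ and $0\le r<1/2$ for the high-frequency tail.
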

\begin{proof}
Fix some $c_0>0$. Then $\mathcal E(t)=\mathcal E_{<c_0}(t)+\mathcal E_{>c_0}(t)$, where
\begin{equation}
\mathcal E_{<c_0}(t) =\int_{0<|\xi|<c_0}\mathcal E_\xi(t)d\xi \text{ and }
\mathcal E_{>c_0}(t) =\int_{|\xi|>c_0}\mathcal E_\xi(t) d\xi. 
\end{equation}

The argument used in Theorem \ref{thm:nonper_L2_decay} may be readily adapted to show that
\begin{equation}\label{eq:nonper_L1_decay_1}
\mathcal E_{>c_0}(t)\le  K(1+t)^{-\frac{N-1}{2}}
\end{equation}
for a $K$ depending on the data.

To understand the behavior of $\mathcal E_{<c_0}(t)$, we note that the Fourier transform of an $L^1$ functions is $L^\infty$, so $\mathcal E_\xi(0)$ is bounded independently of $\xi$. Thus we get (performing the change of variables $a=\rho^2(t+1)$ when going to the fourth line)
\begin{equation}\label{eq:nonper_L1_decay_2}
\begin{split}
\mathcal E_{<c_0}(t)&\le \int_{\{|\xi|<c_0\}} b\mathcal E_\xi(0)\exp\left(-c|\xi|^2t\right)d\xi\\
                    &\le Ce^{cc_0^2}\int_{\{|\xi|<c_0\}} \exp\left(-c|\xi|^2(t+1)\right) d\xi\\
&\le \tilde C\int_0^{\infty} \rho^{N-2}\exp\left(-c\rho^2(t+1)\right) d\rho\\
&\le \tilde C\int_0^{\infty} (t+1)^{-\frac{N-2}{2}}a^{\frac{N-2}{2}}\exp\left(-ca\right)(2a(t+1))^{-\half})da\\
&\le 2\tilde C\left(\int_0^{\infty}a^{\frac{N-2}{2}-\half}\exp\left(-ca\right)\right)(t+1)^{-\frac{N-2}{2}-\half}\\
&\le K(t+1)^{-\frac{N-1}{2}}.
\end{split}
\end{equation}
Then \eqref{eq:nonper_L1_decay_est} follows by combining \eqref{eq:nonper_L1_decay_1} and \eqref{eq:nonper_L1_decay_2}.
\end{proof}

\appendix

\section{Some useful analytic facts }

Here we have compiled some analytic facts that we use in the paper. The forms given here are the forms we use, and no attempt has been made to state them in any additional generality.  

First we record a simple version of the trace theorem.
\begin{thm}[Trace]\label{trace}
If $\Omega=(0,\ell)$ or $\Omega=\R^{N-1}\times (0,\ell)$ or $\Omega=\T^{N-1}\times (0,\ell)$, then there is a continuous linear map $\emph{Tr}:H^1(\Omega)\to L^2(\partial\Omega)$ so that for $f\in C^\infty(\bar\Omega)$, we have $\emph{Tr} f=f\upharpoonright \partial\Omega$ and so that $f, \emph{Tr} f$ satisfy the integration by parts formula for all $f\in H^1(\Omega)$.
\end{thm}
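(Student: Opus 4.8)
\noindent\emph{Proof proposal.} The plan is to establish the trace inequality first in one dimension by an elementary computation, then transfer it to the slab domains $\Sigma\times(0,\ell)$ by Fourier transforming in the horizontal variable, and finally pass from smooth functions to all of $H^1$ by density, carrying the integration-by-parts identity along with the limit.

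First I would handle $\Omega=(0,\ell)$, where $\partial\Omega=\{0,\ell\}$ and $L^2(\partial\Omega)\cong\R^2$. For $f\in C^\infty([0,\ell])$ and any $y\in(0,\ell)$ the fundamental theorem of calculus gives $f(0)=f(y)-\int_0^y f'(s)\,ds$, so $\abs{f(0)}\le\abs{f(y)}+\int_0^\ell\abs{f'(s)}\,ds$; averaging in $y$ over $(0,\ell)$ and applying Cauchy--Schwarz twice yields $\abs{f(0)}\ls\norm{f}_{H^1((0,\ell))}$, and the same estimate at $y=\ell$ controls $\abs{f(\ell)}$. Defining $\mathrm{Tr}f=(f(0),f(\ell))$ on smooth functions and using density of $C^\infty([0,\ell])$ in $H^1((0,\ell))$ produces the bounded extension.

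Next, for $\Omega=\Sigma\times(0,\ell)$ with $\Sigma\in\{\R^{N-1},\T^{N-1}\}$, note that $\Sigma$ is boundaryless, so $\partial\Omega=(\Sigma\times\{0\})\cup(\Sigma\times\{\ell\})$. For $f$ in a dense subspace of $H^1(\Omega)$ consisting of functions smooth up to the boundary and, in the case $\Sigma=\R^{N-1}$, rapidly decaying in $x$, the Fourier transform in $x$ commutes with $\p_y$; applying the one-dimensional bound to $y\mapsto\hat f(\xi,y)$ gives $\abs{\hat f(\xi,\ell)}^2\ls\int_0^\ell(\abs{\hat f(\xi,y)}^2+\abs{\p_y\hat f(\xi,y)}^2)\,dy$ for each $\xi\in\hat\Sigma$. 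Integrating (respectively summing) in $\xi$ and using Plancherel in $x$ gives $\norm{f(\cdot,\ell)}_{L^2(\Sigma)}^2\ls\norm{f}_{H^1(\Omega)}^2$, and similarly at $y=0$. This defines a bounded linear map on a dense subspace, which extends uniquely to $\mathrm{Tr}:H^1(\Omega)\to L^2(\partial\Omega)$ and agrees with restriction to $\partial\Omega$ on smooth functions by construction.

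Finally, for $f,g$ in the dense smooth subspace the classical divergence-theorem identity $\int_\Omega(\p_i f)g\,dz+\int_\Omega f(\p_i g)\,dz=\int_{\partial\Omega}fg\,\nu_i$ holds, and each term is continuous on $H^1(\Omega)\times H^1(\Omega)$; the boundary term is controlled by $\norm{\mathrm{Tr}f}_{L^2(\partial\Omega)}\norm{\mathrm{Tr}g}_{L^2(\partial\Omega)}\ls\norm{f}_{H^1(\Omega)}\norm{g}_{H^1(\Omega)}$ thanks to the trace bound, so the identity passes to all $f,g\in H^1(\Omega)$ with boundary values read off via $\mathrm{Tr}$. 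I expect the only genuinely delicate point to be the density of smooth, horizontally-decaying functions in $H^1(\Omega)$ when $\Sigma=\R^{N-1}$, which is handled by truncating in $x$ and mollifying (convolving slightly dilated copies of $f$ near the faces $y=0,\ell$ so that the mollification stays inside the slab); in the cases $\Omega=(0,\ell)$ and $\Omega=\T^{N-1}\times(0,\ell)$ this density is standard, and the remainder is routine bookkeeping.
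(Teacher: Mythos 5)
Your argument is correct, but it is worth noting that the paper does not prove this theorem at all: it simply cites it as a special case of a standard result in Evans. So any comparison is between your self-contained construction and an appeal to the literature. Your route — prove the one-dimensional bound $\abs{f(0)}^2+\abs{f(\ell)}^2\lesssim\norm{f}_{H^1((0,\ell))}^2$ by the fundamental theorem of calculus plus averaging, then lift it to the slab fiberwise and extend by density — is sound, and it has the advantage of being elementary and of producing exactly the $\xi$-uniform one-dimensional estimate that the paper reuses later (compare the derivation of \eqref{eq:endpointestimate} in Section 2.3, which is essentially your 1D lemma in sharpened, $\Hxi$-weighted form). One small remark: the horizontal Fourier transform is an unnecessary detour here. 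Since the constant in the 1D bound is independent of the fiber, you can apply it directly to $y\mapsto f(x,y)$ for a.e.\ $x$ and integrate in $x$, which avoids any discussion of which dense subclass is Fourier-friendly; the Plancherel step buys you nothing for the unweighted $L^2(\Sigma)$ trace (it only becomes essential when one wants frequency-dependent constants, as in \eqref{eq:endpointestimate}). Your handling of the density issue for $\Sigma=\R^{N-1}$ (truncation in $x$, mollification adapted to the two flat faces) and the passage of the integration-by-parts identity to the limit by bilinear continuity are both standard and correctly executed.
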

\begin{proof}
This is a special case of Theorem 3 in Chapter 5.9 of \cite{evans}.
\end{proof}

Next we record a version of the Poincar\'e inequality.

\begin{thm}[Poincar\'e inequality]\label{poincare}
If $\Omega=(0,\ell)$ or $\Omega=\R^{N-1}\times (0,\ell)$ or $\Omega=\T^{N-1}\times (0,\ell)$, then there is some constant $C>0$ so that for all $f\in H^1(\Omega)$ satisfying $f=0$ on $\{y=0\}$,
\begin{equation}
||f||_{H^1}\lesim ||Df||_{L^2}.
\end{equation}
\end{thm}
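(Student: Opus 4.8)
The plan is to reduce the estimate to the one-dimensional case $\Omega=(0,\ell)$ and then recover the general case by slicing in the horizontal variable. Since $\|f\|_{H^1(\Omega)}^2=\|f\|_{L^2(\Omega)}^2+\|\nabla f\|_{L^2(\Omega)}^2$ and $\|\partial_y f\|_{L^2(\Omega)}\le\|\nabla f\|_{L^2(\Omega)}$, it suffices to prove $\|f\|_{L^2(\Omega)}\le\ell\|\partial_y f\|_{L^2(\Omega)}$, as this already yields $\|f\|_{H^1(\Omega)}^2\le(1+\ell^2)\|\nabla f\|_{L^2(\Omega)}^2$.

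First I would treat $\Omega=(0,\ell)$. Any $f\in H^1((0,\ell))$ has an absolutely continuous representative satisfying $f(y)=f(0)+\int_0^y f'(s)\,ds$, and the hypothesis that $f$ vanishes on $\{y=0\}$ means $f(0)=0$, so $f(y)=\int_0^y f'(s)\,ds$. Cauchy--Schwarz then gives $|f(y)|^2\le y\int_0^y|f'(s)|^2\,ds\le\ell\int_0^\ell|f'(s)|^2\,ds$, and integrating in $y$ over $(0,\ell)$ produces $\|f\|_{L^2((0,\ell))}^2\le\ell^2\|f'\|_{L^2((0,\ell))}^2$.

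For $\Omega=\Sigma\times(0,\ell)$ with $\Sigma=\R^{N-1}$ or $\Sigma=\T^{N-1}$, I would invoke Fubini's theorem: for $f\in H^1(\Omega)$ and almost every $x\in\Sigma$ the slice $f(x,\cdot)$ belongs to $H^1((0,\ell))$ with weak derivative $(\partial_y f)(x,\cdot)$, and $\int_\Sigma\|f(x,\cdot)\|_{H^1((0,\ell))}^2\,dx=\|f\|_{L^2(\Omega)}^2+\|\partial_y f\|_{L^2(\Omega)}^2$. The trace of $f$ on $\{y=0\}$ supplied by Theorem~\ref{trace} is represented by the map $x\mapsto f(x,0)$, where $f(x,0)$ is the boundary value of the $y$-continuous representative of the slice; this slicing-compatibility of the trace is checked first on the dense class $C^\infty(\bar\Omega)$, where it is obvious, and then extended using that $\mathrm{Tr}$ is continuous from $H^1(\Omega)$ to $L^2(\partial\Omega)$. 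Hence $\mathrm{Tr}(f)=0$ forces $f(x,0)=0$ for almost every $x$, so the one-dimensional estimate applies to each slice and gives $\int_0^\ell|f(x,y)|^2\,dy\le\ell^2\int_0^\ell|\partial_y f(x,y)|^2\,dy$; integrating over $x\in\Sigma$ then yields $\|f\|_{L^2(\Omega)}^2\le\ell^2\|\partial_y f\|_{L^2(\Omega)}^2$, which completes the proof.

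The only point requiring care is the identification of the abstractly defined trace $\mathrm{Tr}(f)$ with the pointwise slice values $f(x,0)$; everything else is elementary. If one prefers to sidestep that identification, an equivalent route is a direct density argument: the set $\{f\in C^\infty(\bar\Omega):f\equiv 0\text{ on a neighborhood of }\{y=0\}\}$ is dense in the $H^1(\Omega)$-norm in the subspace $\{f\in H^1(\Omega):\mathrm{Tr}(f)=0\}$, the inequality $\|f\|_{L^2(\Omega)}\le\ell\|\partial_y f\|_{L^2(\Omega)}$ holds trivially for such smooth $f$ by the fundamental theorem of calculus, and it passes to the limit because both sides are continuous with respect to the $H^1(\Omega)$ norm.
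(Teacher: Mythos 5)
Your proof is correct and follows essentially the same route as the paper's (very terse) argument: establish the one-dimensional estimate on $(0,\ell)$ via the fundamental theorem of calculus and then reduce the slab cases to it by slicing in the horizontal variable. The only cosmetic difference is that you use Cauchy--Schwarz where the paper cites Minkowski's inequality for the one-dimensional step, and you spell out the trace--slice compatibility that the paper leaves implicit.
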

\begin{proof}
For $\R^{N-1}\times (0,\ell)$ and $\T^{N-1}\times (0,\ell)$, it follows from the Poincar\'e inequality for $(0,\ell)$, which in turn follows from integration and Minkowski's inequality.
\end{proof}

Next we record a version of Korn's inequality.

\begin{thm}[Korn's inequality]\label{korn}
If $\Omega=\R^{N-1}\times (0,\ell)$ or $\Omega=\T^{N-1}\times (0,\ell)$, then there is some constant $C>0$ so that for all $f\in H^1(\Omega)$ satisfying $f=0$ on $\{y=0\}$, we have
\begin{equation}
||Df||_{L^2}\lesim ||\mathbb Df||_{L^2}.
\end{equation}
\end{thm}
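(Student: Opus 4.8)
The asserted bound is a Korn inequality for vector fields constrained only by the no-slip condition on the face $\{y=0\}$, and the plan is to reduce it to the classical Korn second inequality
\[
\norm{\nabla g}_{L^2(U)}^2 \lesssim \norm{g}_{L^2(U)}^2 + \norm{\mathbb{D}g}_{L^2(U)}^2, \qquad g \in H^1(U;\R^N),
\]
which is available on every bounded Lipschitz domain $U$ (and, as indicated below, on the slab $\R^{N-1}\times(0,\ell)$ as well). The only geometric input needed is that $\{y=0\}$ annihilates all infinitesimal rigid motions: if $z \mapsto a + Bz$ with $B$ antisymmetric vanishes on $\{y=0\}$, then $a=0$ and $Be_k = 0$ for $k=1,\dots,N-1$, so $B$ has vanishing first $N-1$ columns and hence, by antisymmetry, vanishes entirely; in the periodic case horizontal periodicity already forces $B=0$ on its own.

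I would first handle the periodic case $\Omega = \T^{N-1}\times(0,\ell)$, a bounded Lipschitz domain, by a compactness argument. Arguing by contradiction, suppose there were $f_n \in H^1(\Omega;\R^N)$ with $f_n = 0$ on $\{y=0\}$, $\norm{\nabla f_n}_{L^2} = 1$, and $\norm{\mathbb{D}f_n}_{L^2}\to 0$. By Theorem~\ref{poincare} the $f_n$ are bounded in $H^1(\Omega)$, so along a subsequence they converge weakly in $H^1$ and strongly in $L^2$ to some $f$ with $\mathbb{D}f=0$; thus $f$ is an infinitesimal rigid motion, which the condition on $\{y=0\}$ forces to be $f\equiv 0$. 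Applying the Korn second inequality to $f_n - f_m$ and using $\norm{f_n - f_m}_{L^2} \to 0$ together with $\norm{\mathbb{D}f_n}_{L^2}, \norm{\mathbb{D}f_m}_{L^2}\to 0$ shows $(f_n)$ is Cauchy in $H^1$, hence $\nabla f_n \to \nabla f = 0$ in $L^2$, contradicting the normalization. This settles the periodic case, which is the form used in the body of the paper, e.g.\ in \eqref{ineq:fourierkorn}, where it is applied to single-mode fields on the torus slab.

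For the non-periodic slab $\Omega = \R^{N-1}\times(0,\ell)$ the compactness step is no longer available, and removing the lower-order term is the main obstacle. I would first extend Korn's second inequality to $\Omega$ by a translation-invariant localization: cover $\Omega$ by unit cubes $Q_k$, each a translate of one fixed bounded Lipschitz set, apply Korn's second inequality (with a common constant) to $\chi_k f$ for a bounded-overlap partition of unity $\{\chi_k\}$ with $\norm{\nabla\chi_k}_{\infty}$ uniformly bounded, and sum, using $\mathbb{D}(\chi_k f) = \chi_k\mathbb{D}f + \mathrm{sym}(\nabla\chi_k\otimes f)$, to obtain $\norm{\nabla f}_{L^2(\Omega)}^2 \lesssim \norm{f}_{L^2(\Omega)}^2 + \norm{\mathbb{D}f}_{L^2(\Omega)}^2$. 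To absorb $\norm{f}_{L^2(\Omega)}^2$ I would then pass to the horizontal Fourier transform, as elsewhere in the paper: the inequality decomposes into one-dimensional estimates on $(0,\ell)$ for $\hat f(\xi,\cdot)$ that must be verified with a constant uniform in $\xi \in \R^{N-1}$. For bounded $\abs{\xi}$ this is routine: after an integration by parts in $y$, the boundary terms at $y=\ell$ are controlled by \eqref{eq:endpointestimate} and the one-dimensional Poincar\'e inequality and then absorbed. The delicate point — and the crux of the non-periodic case — is uniformity as $\abs{\xi}\to\infty$, where one must exploit that rapid horizontal oscillation makes $\widehat{\mathbb{D}f}(\xi,\cdot)$ control the bulk of $\widehat{\nabla f}(\xi,\cdot)$ directly. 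In the spirit of this appendix one may instead simply cite a standard Korn inequality on slab domains; in any case, once $\norm{\nabla f}_{L^2(\Omega)}^2 \lesssim \norm{\mathbb{D}f}_{L^2(\Omega)}^2$ is in hand the proof is finished.
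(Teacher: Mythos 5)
The paper's own ``proof'' of Theorem~\ref{korn} is a one-line citation to Lemma~2.7 of \cite{Beale1981}, which covers the slab geometry directly; you instead attempt a genuine argument, so the routes differ. Your treatment of the periodic case is correct and complete: the compactness/contradiction scheme (normalize $\norm{\nabla f_n}_{L^2}=1$, use Theorem~\ref{poincare} for an $H^1$ bound, extract a limit killed by the rigid-motion analysis on $\{y=0\}$, then upgrade to $H^1$-Cauchy via Korn's second inequality) is the standard way to pass from the second Korn inequality to the homogeneous one on a bounded Lipschitz domain, and it is indeed the periodic version that the body of the paper actually invokes in \eqref{ineq:fourierkorn}. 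What this buys over the paper's approach is a self-contained proof (modulo the classical second inequality) in the case that matters for the torus analysis; what it costs is that it gives no effective constant and does not by itself cover $\Sigma=\R^{N-1}$.

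For the non-periodic slab your argument is not complete as written: the partition-of-unity step legitimately yields the second inequality $\norm{\nabla f}_{L^2}^2\lesssim \norm{f}_{L^2}^2+\norm{\mathbb{D}f}_{L^2}^2$ with a uniform constant, but the step that removes $\norm{f}_{L^2}^2$ --- a per-frequency Korn inequality on $(0,\ell)$ with constant uniform in $\xi\in\R^{N-1}$ --- is exactly the content of the theorem and is only asserted, not proved (you acknowledge this yourself). Since you then fall back on citing a standard slab Korn inequality, which is precisely what the paper does via \cite{Beale1981}, the proposal is acceptable at the paper's own level of rigor; but if you want the non-periodic case to be self-contained you must actually establish the uniform-in-$\xi$ one-dimensional estimate (e.g.\ by Beale's direct computation) rather than gesture at it.
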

\begin{proof}
For a proof, see \cite{Beale1981}, Lemma 2.7. 
\end{proof}

Finally, we record a result about time derivatives.

\begin{thm}\label{thm:CL2}
If $v$ is a complex-valued function satisfying $v\in L^2_T(\Hxi(0,\ell))$ and $\partial_t v\in L^2_T(\Hxi^*(0,\ell))$, then $v\in C([0,T];L^2(0,\ell))$ and
\begin{equation}
\frac{d}{dt}||v||_{L^2(0,\ell)}^2=[\partial_t v,v]_{\Hxi^*,\Hxi}+\overline{[\partial_t v,v]_{(\Hxi)^*,\Hxi}}.
\end{equation}
The same holds if we replace $\Hxi$ with $\Hxihel$. Also, if $v$ is a complex-valued function satisfying $v\in L^2_T(\Hbot(\Omega))$ and $\partial_t v\in L^2_T(\Hbot^*(\Omega))$, then $v\in C([0,T];L^2(\Omega))$ and
\begin{equation}
\frac{d}{dt}||v||_{L^2(\Omega)}^2=[\partial_t v,v]_{(\Hbot)^*,\Hbot}+\overline{[\partial_t v,v]_{(\Hbot)^*,\Hbot}}.
\end{equation}
The same holds if we replace $\Hbot$ with $\Hhel$.
\end{thm}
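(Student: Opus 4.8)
The plan is to reduce the statement to the classical Lions--Magenes lemma on the continuity and product rule for a function lying in a Bochner space whose distributional time derivative lies in the dual space, adapted to the complex-valued (hence sesquilinear) setting. The key structural input is already in Section 2: each of the spaces $X \in \{\Hxi(0,\ell),\ \Hxihel,\ \Hbot(\Omega),\ \Hhel\}$ is a complex Hilbert space that embeds continuously and densely into its $L^2$-closure $\H$, and $\H$ in turn embeds continuously and densely into $X^*$ via the Riesz isomorphism of $\H$, so that we have a Gelfand triple $X \hookrightarrow \H \hookrightarrow X^*$ in which the pairing $[\cdot,\cdot]$ extends the $\H$ inner product on the second slot. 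Thus it suffices to prove the following abstract fact: if $X \hookrightarrow \H \hookrightarrow X^*$ is such a triple and $v \in L^2(0,T;X)$ with $\dt v \in L^2(0,T;X^*)$, then $v$ has a representative in $C([0,T];\H)$ and, in $\mathcal{D}'(0,T)$,
\[
\frac{d}{dt}\ns{v}_{\H} = [\dt v, v] + \overline{[\dt v, v]}.
\]
Once this is shown, the theorem follows by applying it with $X$ equal to $\Hxi(0,\ell)$, then $\Hxihel$, then $\Hbot(\Omega)$, then $\Hhel$, with no further changes.

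To prove the abstract statement I would use time mollification. Fix $0 < s < t < T$, let $\rho_\ep$ be a standard mollifier in the time variable, and set $v_\ep = v * \rho_\ep$ on a slightly smaller interval; then $v_\ep \to v$ in $L^2_{\mathrm{loc}}(0,T;X)$ and $\dt v_\ep = (\dt v) * \rho_\ep \to \dt v$ in $L^2_{\mathrm{loc}}(0,T;X^*)$, while $v_\ep$ is smooth in $t$ with values in $X$. For a smooth $X$-valued curve one computes directly, using that the $\H$ inner product is conjugate-linear in its second slot and that $[\cdot,\phi]$ extends $(\cdot,\phi)_{\H}$ for $\phi \in X$,
\[
\frac{d}{dt}\ns{v_\ep(\tau)}_{\H} = (\dt v_\ep,v_\ep)_{\H} + (v_\ep,\dt v_\ep)_{\H} = [\dt v_\ep, v_\ep] + \overline{[\dt v_\ep, v_\ep]}.
\]
Integrating from $s$ to $t$ and letting $\ep \to 0$: the right-hand side converges in $L^1$ on compact subintervals because $\dt v_\ep \to \dt v$ in $L^2(X^*)$ and $v_\ep \to v$ in $L^2(X)$, so $\tau \mapsto \ns{v_\ep(\tau)}_{\H}$ converges uniformly on compacts to an absolutely continuous function with the claimed derivative, identifying the right representative of $\ns{v}_{\H}$. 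Applying the same identity to the difference $v_\ep - v_{\ep'}$ gives a uniform Cauchy estimate for $v_\ep$ in $C([s,t];\H)$; combined with a reflection extension across the endpoints $0$ and $T$ (which does not affect the interior computations) this upgrades $v$ itself to an element of $C([0,T];\H)$.

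The main obstacle — and essentially the only point requiring care — is bookkeeping with the sesquilinearity: since $[\cdot,\cdot]$ is complex-linear in the dual (first) slot while $(\cdot,\cdot)_{\H}$ is conjugate-linear in the second, the product rule produces $[\dt v,v] + \overline{[\dt v,v]} = 2\,\mathrm{Re}\,[\dt v,v]$ and not $2[\dt v,v]$, and one must verify that the mollified identity is consistent with this and that the endpoint extension preserves all the relevant convergences. Everything else is either recorded in Section 2 (the existence of the Gelfand triples, the density of $\Hhel$ in $\overline{\Hhel}$, etc.) or is standard, e.g.\ as in \cite{evans}, Chapter~5.9.
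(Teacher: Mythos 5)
Your proposal is correct and follows essentially the same route as the paper, which simply defers to the standard Lions--Magenes/Evans argument (Theorem 3 in Chapter 5.9 of \cite{evans}) adapted to the complex setting; your mollification argument and the sesquilinearity bookkeeping yielding $2\,\mathrm{Re}\,[\dt v,v]$ are exactly the details that citation suppresses.
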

\begin{proof}
This is proved in the same manner as Theorem 3 in Chapter 5.9 of \cite{evans}.
\end{proof}

\bibliographystyle{abbrv}
\bibliography{vsw_frac}

\end{document}